\documentclass[12pt, leqno]{article}

\usepackage{amsmath,amssymb,amsthm}
\usepackage{latexsym}
\usepackage[small,nohug,heads=littlevee]{diagrams}
\diagramstyle[labelstyle=\scriptstyle]
\usepackage[all]{xy}

\usepackage[dvips]{graphicx}
\newarrow{Into} C--->

\setlength{\hoffset}{-2cm} \setlength{\voffset}{-0.6cm}
\setlength{\textwidth}{170mm} \setlength{\textheight}{230mm}

 \newtheorem{thm}{Theorem}
 
 \newtheorem{lem}{Lemma}
 
 \newtheorem{prop}{Proposition}
 \newtheorem{defi}{Definition}

 \theoremstyle{remark}
 \newtheorem{rem}{Remark}[section]

\newcommand{\e}{\leqno}

\usepackage[dvips]{graphicx}
\usepackage{graphics}
\usepackage{amsmath}
\usepackage{amscd}
\usepackage{amssymb}

\title{Topics in Geometric Group Theory. Part I}

\author{Daniele Ettore Otera\footnote{Vilnius University, Institute of Data Science and Digital Technologies,
Akademijos st. 4, LT-08663, Vilnius, Lithuania.
e-mail: daniele.otera@mii.vu.lt \ - \  daniele.otera@gmail.com}
 \textsc{ and } Valentin Po\'enaru\footnote{Professor Emeritus at
Universit\'e Paris-Sud 11, D\'epartement de Math\'ematiques,
B\^atiment 425, 91405 Orsay, France. e-mail: valpoe@hotmail.com }}

\begin{document}

\maketitle

\begin{abstract}
This survey paper concerns mainly with some asymptotic topological properties of finitely presented discrete groups:  \textit{quasi-simple filtration} (\textsc{qsf}), \textit{geometric simple connectivity} (\textsc{gsc}), \textit{topological inverse-\textsc{representations}}, and  the
notion of \textit{easy groups}. As we will explain, these properties are central in the theory of discrete groups seen from a topological viewpoint at infinity.
Also, we shall outline the main steps of the achievements obtained in the last years around the very general question whether or not all finitely presented groups satisfy these conditions.

\vspace{0.1cm} \noindent {\bf Keywords:} Geometric simple
connectivity (\textsc{gsc}), quasi-simple filtration ({\sc qsf}),
inverse-\textsc{representations}, finitely presented groups, universal
covers, singularities, Whitehead manifold, handlebody decomposition, almost-convex groups, combings, easy groups.

\vspace{0.1cm}  \noindent {\bf MSC Subject:} 57M05; 57M10; 57N35; 57R65; 57S25; 20F65; 20F69.
\end{abstract}

\tableofcontents

\section{Introduction}

             This is a two-part survey paper, part I by D. Otera and V. Po\'enaru, part II by V. Po\'enaru (to appear soon in arXiv).  In this first part we present, mostly, background material, historical context, and motivations. Its main items are the theory of \textit {topological inverse}-\textsc{representations}, a sort of dual notion to the usual group representations, some tameness conditions at infinity for open manifolds, cell-complexes and discrete groups, such as \textit{geometric simple connectivity}  (\textsc{gsc}), \textit{Dehn-exhaustibility}, the \textsc{qsf} {\textit {property}}, and then a glimpse into the more recent joint work of both authors on combings of groups  and on the so-called {\textit {Whitehead nightmare}}.

          On the other hand, Part II is much more technical, since it outlines the proof  (announced in  \cite{Po_QSF_survey} and proved in \cite{Po_QSF2, Po_QSF3})  that all finitely presented  groups satisfy the \textsc{qsf} property (see also the related \cite{Po_QSF1_Geom-Ded}).  This highly non-trivial notion, which in a certain sense is the good extension of simple connectivity at infinity (they are actually equivalent for closed 3-manifold groups), was introduced by Brick, Mihalik  and Stallings \cite{BM1, St3}, and it has roots in earlier work of Casson and of Po\'enaru \cite{Ger-St, Po3-JDG}, as well as in the much older work of Max Dehn.

          Note that, even for the very special class of 3-manifold groups, in order to show they all are {\textsc{qsf}}  one needs deep and difficult results such as either the geometrization theorem in dimension 3 \cite{BBBMP, Mo-Ti, Mo-Ti2, Per1, Per2, Per3, Thu} or else the works of Agol, Haglund and Wise on special groups \cite{Ag, HW}.

          \subsection{Acknowledgments} We would like to thank the referee who helped us to greatly improve the presentation of the paper. His/her pertinent suggestions, corrections and comments gave us the opportunity to make our paper much more readable, precise, structured  and complete.

\section{Topology at infinity, universal covers and discrete groups}

The main technical tool used in this survey is what we call a topological inverse-\textsc{representation} for a finitely presented discrete group $G$. We write here ``\textsc{representation}'' in capital letters, so as to distinguish it from the standard well-known group representations. But some preliminaries will be necessary before we can actually define our \textsc{representations} of groups.

\subsection{The $\Phi/\Psi$-theory and zippings}
We consider some completely general \textbf{non-degenerate} simplicial map $f: X \to M$.
Here $X$ and $M$ are simplicial complexes, $X$ being at most countable, and it is also assumed that ${\rm{dim }} (X)\leq {\rm{dim }} (M)$. But $X$ need not be locally-finite and it will be endowed with the \textbf{weak topology}. By definition, a \textbf{\textit{singularity}} $x \in {\rm {Sing}} (f) \subset X$ is a point of $X$ in the neighborhood of which $f$ fails to inject, i.e. fails to be immersive.

Trivially, the map $f$ defines an equivalence relation $\Phi(f) \subset X \times X$, by $(x_1, x_2) \in \Phi (f) \Leftrightarrow f(x_1)= f(x_2)$.

We will be interested in equivalence relations on $X$, $\mathcal R \subset X\times X$, with the following additional feature:
 let $\sigma_1, \sigma_2 \subset X$ be two simplexes of the same dimension and such that $f(\sigma_1)=f(\sigma_2)$. If we can find $x_1 \in \rm{int} \ \sigma_1$, $x_2 \in \rm{int} \ \sigma_2$
 such that $f(x_1)= f(x_2)$, then we also have the implication $(x_1, x_2) \in \mathcal R \Rightarrow \mathcal R$ identifies $\sigma_1$ to $\sigma_2$.

 This kind of equivalence relations will be called \textbf{\textit{$f$-admissible}}. Notice that, whenever $\mathcal R \subset \Phi (f)$ is admissible, we have a natural simplicial commutative diagram
$$\begin{diagram}
X &&\rTo^f(4,2)&&M \\
    &\rdTo_{\pi(\mathcal R)}& &\ruTo_{f_1(\mathcal R)}\\& &X/\mathcal R.
\end{diagram} \e (1-1) $$

With this, we develop now the following little theory, for which the detailed definitions, lemmas and proofs, are to be found in \cite{Po1-duke, Po_QSF1_Geom-Ded}. The point here is that, in the context of our non-degenerate simplicial map $f$, there is also another equivalence relation more subtle than the $\Phi(f)$. This is the $\Psi(f) \subset \Phi(f) \subset X\times X$, characterized by the following lemma:

\begin{lem}(\cite{Po1-duke}) \label{lemma1}
There is a {\textbf{unique}}, $f$-admissible equivalence relation $\Psi (f) \subset \Phi (f) \subset X \times X $ which has the following two properties, which also characterize it.
\begin{itemize}
\item[(i)]  In the context of the corresponding commutative diagram (1 -- 1), which we write again below
$$\begin{diagram}
X &&\rTo^f(4,2)&&M \\
    &\rdTo_{\pi= \pi(\Psi (f))}& &\ruTo_{f_1= f_1 (\Psi(f))}\\& &X/ \Psi(f)
\end{diagram} \e (1-2) $$
we have ${\rm{Sing}} (f_1) = \emptyset$, i.e. $f_1$ is \textbf{immersive}.
\item[(ii)] Let $\mathcal R$ be \textbf{any}  equivalence relation (not necessarily assumed a priori to be admissible), such that  $\mathcal R\subset \Phi(f)$ and  ${\rm{Sing}} (f_1(\mathcal R)) = \emptyset$.   Then  $\mathcal R= \Psi (f)$.
\end{itemize}
\end{lem}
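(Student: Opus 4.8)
The plan is to produce $\Psi(f)$ by an explicit \emph{zipping} construction and then to read off both characterizing properties from the way it is built. First I would construct $\Psi(f)$ as the terminal stage of a folding process: starting from the diagonal $\Delta \subset X \times X$, whenever the current quotient map still has a singularity — which, by non-degeneracy of $f$, is always a fold of two simplexes $\sigma_1, \sigma_2$ of the same dimension with $f(\sigma_1) = f(\sigma_2)$ meeting along a common face — I enlarge the current relation so as to identify $\sigma_1$ with $\sigma_2$ through the simplicial isomorphism $(f|_{\sigma_2})^{-1} \circ f|_{\sigma_1}$, and then close up under the admissibility condition of the excerpt. Since $X$ is at most countable and carries the weak topology, this process is indexed by a countable (possibly transfinite) ordinal and stabilises; I set $\Psi(f)$ equal to the resulting equivalence relation. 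By construction $\Psi(f) \subset \Phi(f)$, it is $f$-admissible because each step takes an admissible closure, and the terminal stage has no remaining folds, which is exactly property (i), namely $\mathrm{Sing}(f_1) = \emptyset$ in diagram (1--2).

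The decisive point is to show that this construction is independent of all the choices made and, more strongly, that it yields property (ii). I would organise the uniqueness proof as a two-sided inclusion against an arbitrary $\mathcal{R} \subset \Phi(f)$, not assumed admissible, with $\mathrm{Sing}(f_1(\mathcal{R})) = \emptyset$. For the inclusion $\Psi(f) \subset \mathcal{R}$ I argue that every elementary fold performed while building $\Psi(f)$ is \emph{forced}: if $\sigma_1, \sigma_2$ fold in an intermediate quotient, then their images in $X/\mathcal{R}$ must already be identified, for otherwise $f_1(\mathcal{R})$ would fail to inject near that fold. Running this argument inductively along the stages of the construction shows that $\mathcal{R}$ realises every identification recorded by $\Psi(f)$.

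For the reverse inclusion $\mathcal{R} \subset \Psi(f)$ I would exploit that $X/\Psi(f)$ already carries the immersion $f_1(\Psi(f))$ onto its image. Suppose $(x_1, x_2) \in \mathcal{R}$ while $x_1, x_2$ are \emph{not} identified by $\Psi(f)$; then they project to distinct points of $X/\Psi(f)$ having the same image in $M$, so passing further to $X/\mathcal{R}$ fuses two distinct sheets of that immersion at a point, and I must show this necessarily reintroduces a singularity of $f_1(\mathcal{R})$, contradicting the hypothesis. Granting this, $\mathcal{R}$ can identify nothing beyond $\Psi(f)$, so the two relations coincide; applied to $\mathcal{R} = \Psi(f)$ itself, the same statement certifies confluence of the construction and hence the uniqueness asserted in the lemma.

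The hard part is precisely this last step: verifying that fusing two distinct immersive sheets always produces a fold, i.e. that $\Psi(f)$ cannot be enlarged inside $\Phi(f)$ without destroying immersiveness. This is where the exact meaning of ``immersive'' from \cite{Po1-duke} has to be used in full — in particular that failure of $f_1$ to inject is detected locally at the identified point together with its star — and where non-degeneracy of $f$ is indispensable, since it is what guarantees that each simplex maps isomorphically and therefore that a genuine double point, rather than a harmless coincidence, is created by the extra identification. I expect the transfinite bookkeeping that makes the zipping terminate and this no-over-identification step to be the two places that demand the most care.
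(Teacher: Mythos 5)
The paper itself does not prove this lemma: it is quoted from \cite{Po1-duke}, and the surrounding text only sketches the transfinite folding (zipping) construction of $\Psi(f)$, which your first paragraph reproduces faithfully. So the only things to compare are the construction, which matches, and your uniqueness argument, which goes beyond what the paper records. Within that argument, the forward inclusion $\Psi(f)\subset\mathcal R$ is sound: showing stage by stage that every fold is forced on $\mathcal R$, because a fold not performed by $\mathcal R$ would leave distinct points of $X/\mathcal R$ with equal $M$-images accumulating at the image of the singular point, is exactly how one proves that $\Psi(f)$ is minimal, in accordance with the paper's own gloss that $\Psi(f)$ is ``the smallest equivalence relation, compatible with $f$, which kills all the singularities''.

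The genuine gap is the reverse inclusion $\mathcal R\subset\Psi(f)$, and it is not merely the delicate step you flag: the claim it rests on, that enlarging $\Psi(f)$ inside $\Phi(f)$ must destroy immersiveness, is false, so no amount of local care at a fused point will establish it. The reason is that $\mathcal R$ may identify two immersive sheets \emph{wholesale} rather than at a single point, in which case no failure of local injectivity appears. Concretely, let $f\colon X\to M$ be any non-trivial simplicial covering map (the simplicial line onto a triangulated circle, or a hexagonal circle doubly covering a triangle). Then $\mathrm{Sing}(f)=\emptyset$, so $\Psi(f)$ is the diagonal; yet $\mathcal R=\Phi(f)$ is an equivalence relation contained in $\Phi(f)$ whose quotient map $X/\Phi(f)\to M$ is injective, hence has empty singular set, while $\mathcal R\neq\Psi(f)$. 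The same phenomenon sits inside the paper itself: for the map $F\colon X^3\to M^3(G)$ of Lemma \ref{lemma6}, the relation $\Psi(F)$ produces the universal cover $\widetilde{M^3(G)}$, whereas $\Phi(F)$ produces $M^3(G)$ injectively, so $\Phi(F)\neq\Psi(F)$ for every $G\neq 1$ although both quotients are immersed. Consequently, property (ii) read literally, with conclusion $\mathcal R=\Psi(f)$, cannot be proved for arbitrary $\mathcal R\subset\Phi(f)$ with immersed quotient; what is true, what \cite{Po1-duke} establishes, and what your forcing argument actually delivers, is the containment $\Psi(f)\subset\mathcal R$. Note that the paper never uses the equality form either: in the proof of Lemma \ref{lemma5} the identity $\Phi_\infty=\Psi_\infty$ is deduced only with the additional input of a cross-section, not from (ii) alone. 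So keep your construction and your forcing argument, discard the reverse inclusion, and state the characterization as minimality.
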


In plain English, $\Psi(f)$ is the smallest equivalence relation, compatible with $f$, which kills all the singularities. We also have the next lemma, which has no equivalent for $\Phi(f)$.

\begin{lem}(\cite{Po1-duke}) \label{lemma2}
In the context of (1 -- 2), the following induced map is surjective: $$\pi _\ast : \pi_1 (X) \to \pi_1 \big (X/ \Psi (f) \big ).$$
\end{lem}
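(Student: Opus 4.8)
The plan is to realize the projection $\pi$ as a composition of elementary \emph{folding} maps and to check that each such fold only ever \emph{quotients} the fundamental group, never enlarges it. Concretely, by the $\Phi/\Psi$-theory of \cite{Po1-duke} the relation $\Psi(f)$ is generated, via admissibility and transitivity, by the identifications forced at the singularities of $f$: each such elementary identification glues a simplex $B\subset X$ onto a simplex $A\subset X$ of the same dimension with $f(A)=f(B)$, where $A$ and $B$ are \emph{adjacent}, i.e.\ they share a common face $F=A\cap B$ across which the folding occurs. I would therefore first treat a single fold $q\colon Y\to Y'=Y/(B\sim A)$, and then assemble the (possibly infinite) zipping out of these.

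For one fold, the central observation is that $Y'$ is a pushout. Writing $r\colon A\cup B\to A$ for the simplicial retraction that is the identity on $A$ and the fold isomorphism on $B$, one has the pushout of the inclusion $A\cup B\hookrightarrow Y$ (of a subcomplex, hence a cofibration) along $r\colon A\cup B\to A$, and this pushout is exactly $Y'$. Now the attached piece $A$ is a simplex, so it is simply connected, while the locus $A\cup B$ is \emph{connected} because $A$ and $B$ meet along the nonempty face $F$. Van Kampen's theorem then gives
\[
\pi_1(Y')\;\cong\;\pi_1(Y)\ast_{\pi_1(A\cup B)}\pi_1(A)\;=\;\pi_1(Y)\big/\big\langle\!\big\langle\,\mathrm{im}\big(\pi_1(A\cup B)\to\pi_1(Y)\big)\big\rangle\!\big\rangle,
\]
since $\pi_1(A)=1$. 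Thus $q_\ast\colon\pi_1(Y)\to\pi_1(Y')$ is a surjection (indeed an isomorphism in the generic case where $A\cup B$ is a cell, and a proper quotient precisely when the fold collapses a nontrivial cycle, as when $A$ and $B$ are ``parallel'' simplices bounding a sphere). I want to stress that the connectedness of $A\cup B$ is exactly what is being used: it is the adjacency of the folded simplices — guaranteed by the fact that $\Psi(f)$ arises from \emph{local} non-injectivity at singularities — that prevents the pathology of identifying two far-apart vertices, which would instead \emph{create} a loop and destroy surjectivity.

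It remains to pass from a single fold to $\pi$ itself. I would well-order the elementary folds as $X=X_0\to X_1\to\cdots$, with $X/\Psi(f)$ the resulting direct limit. Because $X$ carries the weak topology, any loop or homotopy in $X/\Psi(f)$ is compact and hence supported in a finite subcomplex, so it already lives at some finite stage $X_i$; composing finitely many of the surjections above shows $\pi_1(X)\to\pi_1(X_i)$ is onto, and its further image in $\pi_1(X/\Psi(f))$ then hits the prescribed class. Surjectivity of $\pi_\ast$ follows. The step I expect to be the genuine obstacle is not this homotopy-theoretic bookkeeping but the \emph{structural} input it rests on: namely that $\Psi(f)$ really is generated by adjacency-folds of equidimensional simplices sharing a face. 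This is the heart of the $\Phi/\Psi$-theory and the reason the lemma is special to $\Psi(f)$ and has, as the text notes, no analogue for $\Phi(f)$, for which unrelated identifications could manufacture new one-dimensional homology and break surjectivity.
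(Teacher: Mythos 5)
Your proof is correct and follows essentially the route the paper itself sets up: the paper defers the proof of this lemma to \cite{Po1-duke}, but its Section~2.1 describes exactly your structural input --- $\Psi(f)$ is realized by a (by Lemma~\ref{lemma3}, $\omega$-indexed) sequence of folding maps, each identifying two equidimensional simplexes meeting at a singular point, so the glued locus is connected --- and your per-fold van Kampen quotient argument together with the weak-topology compactness passage to the colimit is precisely the intended proof. The only step you gloss (that a loop in the colimit is, after edge-path approximation, the literal image of an edge loop at some finite stage, since it involves only finitely many identifications and $\Psi(f)=\bigcup_i\rho_i$) is routine bookkeeping of the kind you explicitly acknowledge, so there is no genuine gap.
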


Of course, nothing like Lemma \ref{lemma2} is true for the equivalence relation $\Phi(f)$ which, contrary to $\Psi(f)$, has no topological memory, a feature which is one of the reasons
that  makes $\Psi(f)$ interesting.

We will give now some ideas of how one may construct $\Psi(f)$. In order to effectively build our $\Psi(f)$, let us look now for a most efficient way of killing the singularities of $f$, in a manner which should have the minimum of collateral effects.

Start with two simplexes $\sigma _1,  \sigma _2 \subset X$ of the same dimension, such that $f( \sigma _1) = f( \sigma _2)$, and with
 $\sigma _1\cap  \sigma _2 \ni x_1 \in {\rm{Sing}} (f)$. We can go then to a first quotient space $f_1: X_1= X/\rho_1  \to M$, that kills $x_1$ via the \textbf{folding map} which identifies $\sigma _1$ to  $\sigma _2$. This is an equivalence relation $\rho _1 \subset  \Phi(f)$, and one can continue this process, with a sequence of folding maps:
 $$\rho_1 \subset \rho_2 \subset \ldots \subset \rho_n \subset \rho_{n+1} \ldots \subset \Phi(f).$$
 Here $\rho_{\omega} = \bigcup_1^{\infty} \rho_i$ is an equivalence relation too, subcomplex of  $\Phi(f)$ (i.e. closed in the weak topology), and the following induced map $$ f_{\omega} :  X / \rho_{\omega} \to M$$ is again simplicial. If $ f_{\omega}$ is not immersive, we can continue to the next transfinite ordinal, with $\rho_{\omega} \subset \rho_{\omega +1} \subset X \times X$, and go to:
 $$f_{\omega +1} : X / \rho_{\omega +1} \to M.$$

 Now, we can continue via the obvious transfinite induction. Since $X$ is assumed to be at most countable, the process has to stop at some countable ordinal $\omega_1$, and one can actually show that $\rho_{\omega_1}= \Psi(f)$. This $\omega_1$ is \textbf{not} unique, but then we also have the following useful lemma.
 \begin{lem}(\cite{Po1-duke})\label{lemma3}
One can \textbf{chose} our sequence of folding maps $(\rho _i)$ so that $\omega_1 = \omega$.
\end{lem}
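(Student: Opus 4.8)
\section*{Proof proposal for Lemma \ref{lemma3}}

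The plan is to reduce the problem to a \emph{fair scheduling} (dovetailing) argument: I will exhibit a countable sequence of single foldings whose union $\rho_\omega$ already kills every singularity, so that $\mathrm{Sing}(f_\omega)=\emptyset$ and Lemma \ref{lemma1}(ii) forces $\rho_\omega=\Psi(f)$, i.e. $\omega_1=\omega$.

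First I would exploit countability. Since $X$ is at most countable it has countably many simplexes, so the set of \emph{potential folding tasks} --- ordered pairs $(\sigma,\sigma')$ of equidimensional simplexes of $X$ with $f(\sigma)=f(\sigma')$ --- is countable; fix once and for all an enumeration $P_1,P_2,\ldots$ of them. Note that this is an enumeration of simplexes of the \emph{fixed} complex $X$, not of the evolving quotients. I would then define the schedule recursively: having built $\rho_{n-1}$, call $P_k$ \emph{active} at stage $n$ if its two simplexes are still distinct in $X/\rho_{n-1}$ and their common image, together with a coincidence of interior points, exhibits a singularity of $f_{n-1}$; among $P_1,\ldots,P_n$ let $P_{k(n)}$ be the active task of least index, and let $\rho_n$ be $\rho_{n-1}$ enlarged by the folding that identifies the two simplexes of $P_{k(n)}$ (if no such task exists, set $\rho_n=\rho_{n-1}$). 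Each $\rho_n\subset\Phi(f)$, the $\rho_n$ increase, and $\rho_\omega=\bigcup_n\rho_n$ is the resulting limit relation.

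The heart of the argument is to show $\rho_\omega$ leaves no singularity. Suppose a singularity of $f_\omega$ survives; it is witnessed by two simplexes $\bar\sigma_1\ne\bar\sigma_2$ of $X/\rho_\omega$ with equal image meeting at a point $p$. Lifting to $X$ gives a fixed pair $P_k=(\sigma_1,\sigma_2)$, and because $\rho_\omega=\bigcup_n\rho_n$ is an increasing union, the coincidence producing $p$ already holds in some $X/\rho_{i_0}$; moreover identifications are never undone, so $\bar\sigma_1\ne\bar\sigma_2$ at \emph{every} finite stage as well. Hence $P_k$ is active at all stages $n\ge\max(i_0,k)$ and is never folded (folding would identify its simplexes in the limit, a contradiction). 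But only the finitely many tasks $P_1,\ldots,P_{k-1}$ have smaller index, and each, once folded, is permanently inactive, so it can pre-empt $P_k$ at most once; after these are exhausted the schedule is forced to fold $P_k$ --- contradiction. Therefore $\mathrm{Sing}(f_\omega)=\emptyset$, and since $\rho_\omega\subset\Phi(f)$, Lemma \ref{lemma1}(ii) yields $\rho_\omega=\Psi(f)$, so we may indeed take $\omega_1=\omega$.

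The main obstacle to watch is \emph{fairness} of the scheduling. Because $X$ is not assumed locally finite, a single folding can create infinitely many new conflicts at once, so a naive first-in--first-out queue can starve an individual task indefinitely. The device that overcomes this is precisely the use of a \emph{fixed} enumeration of simplex-pairs of $X$ with priority by index (rather than by order of appearance): this guarantees each pair is pre-empted only finitely often and hence served in finite time. The remaining routine points --- that the increasing union of the $\rho_n$ is again an $f$-admissible equivalence relation closed in the weak topology, and that a persisting limit conflict really descends to the asserted finite-stage conflict --- follow the bookkeeping of \cite{Po1-duke} and require no new idea.
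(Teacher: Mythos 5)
The paper does not actually prove Lemma \ref{lemma3}; it states it with a citation to \cite{Po1-duke}, so there is no in-paper argument to compare against. Your fair-scheduling (dovetailing) proof is correct and is the natural way to fill this in: the fixed priority enumeration of simplex-pairs of $X$, the persistence of identifications, and the pigeonhole starvation argument together force every surviving conflict to be folded at a finite stage, after which Lemma \ref{lemma1}(ii) indeed yields $\rho_\omega = \Psi(f)$. One wording slip worth fixing: distinct simplexes of a quotient complex never share \emph{interior} points, so ``activity'' should be phrased as the two image simplexes (still distinct, with equal $f$-image) having intersecting \emph{closures} --- a short affine argument then shows any such shared point is automatically a singularity, which is what legitimizes the fold and what your limit-to-finite-stage descent actually produces.
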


The sequence of folding maps coming with $\rho_{\omega}= \Psi(f)$ is called a {\textbf{{\textit{zipping}}}} of $f$. This kind of \textbf{zipping strategy} is, of course, not unique. Towards the end of this paper, another alternative way to conceive zipping strategies will be presented too.

\subsection{Geometric simple connectivity}

The notion of {\textbf{{\textit{geometric simple connectivity}}} (\textsc{gsc}), which will play a big role in this survey, stems from differential topology, and was probably conceived by T. Wall \cite{Wall}. A smooth manifold $M$ is said to be \textsc{gsc} ({\textit{geometrically simply connected}})  if it has a handlebody decomposition without handles of index one (or, more precisely, if it possesses a smooth handlebody decomposition with a unique handle of index zero and with its 1-handles and 2-handles in cancelling position). If $M$ is closed, this can also be easily rephrased in Morse-theoretical terms. Saying that $M$ is \textsc{gsc} means that there exists a Morse function $f : M \to \mathbb R_+$ without singularities of index $\lambda =1$. Of course \textsc{gsc} can also be defined Morse-theoretically for open manifolds and for the case of non-empty boundary \cite{Po2-duke}. But then appropriate restrictions have to be added to the definition and we will not go into that here.  Concerning \textsc{gsc}, more details are also provided in \cite{FG, FO2, Ot_surv, Ot-Po-Ta, Po_GSC, Po_4-dim, PT-ActaHung, PT4-AMS}.

As said above, the \textsc{gsc} concept also extends to open manifolds, manifolds with boundary and to cell-complexes too, and the ones of interest here will always be infinite.

\begin{defi} \label{gsc} A cell-complex $X$ is said to be \textsc{gsc} if it admits a cell decomposition of the following type
$$X= T \ \bigcup \ \sum _{i=1}^\infty \{ \mbox{1-cells } H_i^1\}\ \bigcup \ \sum _{j=1}^\infty \{ \mbox{2-cells } H_j^2 \}\ \bigcup \  \sum _{k; \lambda \geq 2}\{
\lambda\mbox{-cells } H_k^{\lambda}\}, \e(1-3)$$ such that:

\begin{itemize}
\item $T\subset X$ is a \textsc{properly}{\footnote{In the present paper \textsc{proper}, written with capital letters, will mean inverse image of compact is compact, while ``proper'' will mean interior goes to interior and boundary goes to boundary.}} embedded tree,
\item in (1 -- 3) the infinite sets of indices $\{i\}$ and $\{j\}$ are in canonical bijection and the {\textbf{\emph{geometric intersection
matrix}}} takes the form
$$H_j^2 \cdot H_i^1= \{\delta_{ji} + a_{ji} \in \mathbb Z_+, \mbox { where }
 a_{ji}> 0 \Leftrightarrow j>i\}. \e(1-4)$$
Here $H_j^2 \cdot H_i^1$ is the number of times $H_j^2$ goes through $H_i^1$, without any $+$ or $-$ signs being counted. Notice that $\{H_j^2\}$ is only an appropriately chosen {\textbf{subset}} of the 2-cells $\{H_j^2\} \subsetneq \{H^2\}$.
\end{itemize}
\end{defi}

Of course, the decomposition (1 -- 3) also makes sense when $X$ is replaced by a smooth manifold $M^n$. If $M^n$ is open, then the $T$ in (1 -- 3) becomes a ball $B^n \setminus \{$a closed {\textbf{\textit{tame}}} subset deleted from $\partial B^n\}$, and ``tame'' means that the subset in question is contained inside some smooth line embedded in $\partial B^n \simeq S^{n-1}$; while when $M^n$ is compact, $T$ becomes a $B^n$ (a handle of index $\lambda =0$). Similarly, the various $\lambda$-cells occurring in (1 -- 3), always with $\lambda > 1$, become smooth $n$-handles of index $\lambda$ (and see here \cite{Po_QSF1_Geom-Ded, Po_GSC, Po_4-dim, PT-ActaHung} for more details).

Also, the condition in the right hand side of (1 -- 4) will be called \textbf{\textit{easy id + nilpotent}}. This should be distinguished from the dual condition ``\textbf{\textit{difficult id + nilpotent}}'' where the inequality in (1 -- 4) gets reversed into $a_{ji}> 0 \Leftrightarrow j < i$. Of course, in the finite case the two conditions are equivalent, but {\bf not} in our infinite case of interest. For instance, the classical {\it Whitehead manifold} $Wh^3$ \cite{Wh} (see the next section for details), always the villain in our present story, admits cell-decompositions which are of the difficult id + nilpotent type. And $Wh^3$ is certainly not \textsc{gsc}, a fact which is connected to the $\pi_1^{\infty} Wh^3 \neq 0$, as we shall see.

Here is now a useful related definition which is a sort of weakening of the \textsc{gsc}, specific for the non-compact case, due to L. Funar \cite{FG, FO2}.

\begin{defi}\label{wgsc}
An infinite cell-complex $X$ is said to be {\bf{weakly geometrically
simply connected}} (\textsc{wgsc}) if  it admits an exhaustion by compact and simply-connected
sub-complexes $X =
\bigcup_{i=1}^\infty K_i$, where $K_1 \subset K_2 \subset \ldots
\subset K_j \subset \ldots \subset X$ and $\pi_1 (K_i) =0$ for all $i$'s.
\end{defi}

Clearly \textsc{gsc} $\Rightarrow$ \textsc{wgsc} and in the compact case \textsc{wgsc} becomes mere simple-connectivity. One may also notice that, while \textsc{gsc} has close ties with the all-important notion of ``collapsibility'', there is nothing like that for \textsc{wgsc}.

Finally, we  remind the reader of another fundamental tameness condition at infinity for open manifolds. A locally compact and simply connected space $X$ is said to be \textbf{\textit{simply connected at infinity}} (and one will write $\pi _1 ^{\infty} X =0$) if for every compact $k \subset X$ there is a larger compact subset $k \subset K \subset X$ such that the induced map $\pi_1 (X -K) \to \pi _1 (X - k)$ is zero.

The simple connectivity at infinity is a very strong asymptotic notion and in fact,  it has been used to detect Euclidean spaces among open contractible manifolds, by results of Siebenmann and Stallings in dimension at least 5 \cite{Sie, St1}, Freedman in dimension 4 \cite{Free}, and Edwards and Wall in dimension 3 \cite{Ed, Wall_0} (see \cite{Ot_surv} for a general  overview).
Note also that the notion $\pi _1 ^{\infty} =0$ also makes sense for finitely presented groups \cite{Br1, Ta}.  An old classical easy fact specific for dimension three is the following one: an open 3-manifold $V^3$ which is \textsc{wgsc} is also simply connected at infinity (see e.g. \cite{Ot-Ru-Ta2}). In higher dimension ($n\geq 5$), it turns out that open simply connected $n$-manifolds which are also simply connected at infinity are actually \textsc{gsc} \cite{PT-ActaHung}.

M. Davis \cite{Da} exhibited, in any dimension at least 4,  the first examples of fundamental groups of aspherical closed manifolds which are {\bf {not}} simply connected at infinity, while in dimension 3, all finitely presented (3-manifold) groups are simply connected at infinity, as a consequence of Thurston geometrization conjecture \cite{Thu} proved by Perelman's work \cite{BBBMP, Mo-Ti, Mo-Ti2, Per1, Per2, Per3} (this also follows by Agol's proof of  the virtually Haken Conjecture \cite{Ag} and by the work of Haglund-Wise on special groups \cite{HW}).

\subsection{The Whitehead manifold}

Before going on, let us say few words about the renowned Whitehead 3-manifold $Wh^3$  and its cell-decompositions. This manifold is the main example of an open 3-manifold which is contractible but not homeomorphic to the Euclidean space $\mathbb R^3$ (see for instance \cite{N-Wh,Wh}); in particular, the manifold $Wh^3$ is homotopically equivalent to a point but is not \textbf{\textit{tame}} (which means that it is not homeomorphic to a compact manifold with a closed subset of the boundary removed, see e.g. \cite{Ag, MT, Tu}).

This interesting manifold was discovered by J.H.C. Whitehead around 1934 (\cite{Wh}), actually as a counterexample to his attempted proof of Poincar\'e Conjecture. Roughly speaking, its construction goes as follows: take a solid torus $T_1 \subseteq \mathbb R^3$, and embed a thinner torus $T_2$ inside $T_1$ so that it ''links with itself'' (or, in other simple words, so that it embeds as a {\it Whitehead link}, see e.g. \cite{FG, Gab_2}). Now, iterate the process in such a way that, at each step, one embeds a solid torus $T_i$ inside $T_{i-1}$, like $T_2$ is embedded inside $T_1$, obtaining an infinite  sequence: $T_1 \supset T_2 \supset \cdots T_i \supset \cdots$. The {\it Whitehead continuum} is the intersection of all the tori $T_i$, and the {\it Whitehead manifold} $Wh^3$ is the complement of it in the three-sphere $\mathbb S^3$. In a dual way, $Wh^3$ can be also described as being the ascending union of solid tori $T_j$ embedded inside $T_{j+1}$ so that $T_j$ is a Whitehead link in the interior of $T_{j+1}$ (and this is the classical description of $Wh^3 = \bigcup _1^{\infty} T_j$ ). See here Figure \ref{fig:1} and \cite{Po_4-dim, PT-Wh}.

\begin{figure}

\centering

\includegraphics{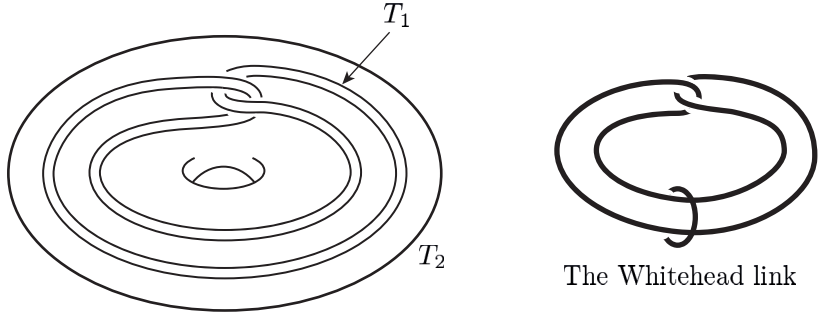}

\caption{The Whitehead manifold $Wh^3$ and the Whitehead link.} \label{fig:1}

\end{figure}

The main features of the Whitehead manifold are that $Wh^3$ is {\bf not} simply connected at infinity (intuitively, this means that there are loops at infinity that cannot be killed staying close to infinity) and that the product $Wh^3 \times \mathbb R$ is actually diffeomorphic to the standard $\mathbb R^4$ (this result was first showed by Shapiro and Glimm, and see here \cite{Glimm} where it is also proved that the Cartesian product of the Whitehead manifold with itself is topologically $\mathbb R^6$). Moreover, recently, D. Gabai has also demonstrate a surprising result: the Whitehead manifold can be described as the union of two embedded submanifolds, both homeomorphic to $\mathbb R^3$, and such that their intersection is also homeomorphic to $\mathbb R^3$ (see \cite{Gab_2}).

What is interesting for us here is that although $Wh^3$ is {\bf not} \textsc{gsc} (since this would imply $\pi_1^{\infty} Wh^3=0$, see the comment just after Lemma \ref{lemma5}), it still admits handlebody decompositions with geometric intersection matrix of the form difficult id + nilpotent, as outlined in \cite{Po_4-dim}.

For instance, consider the Figure \ref{fig:2}. There is there a disk whose boundary is a curve $T_1$, so that the disk intersects itself, and the intersection is a double line (denoted, in the picture, by $[b, c]$). This is a 2-dimensional object that leads to a handlebody
decomposition of $S^1 \times D^2$, whose geometric intersection matrix is $H_1^2 \cdot H_1^1 =1$ and $H_1^2 \cdot H_2^1 = 2$. More precisely this means that there is only one 0-handle, two 1-handles and one 2-handle, and the double line  $[b,c]$ of above is $H_2^1$. To go on, one takes now another closed loop which goes through both a piece of $T_1$ and  $H^1_2$, and next one deals with this loop like we have just done with the curve $T_1$, i.e. by adding along it another disk which hits itself back. Once again, this process leads to another handlebody
decomposition of $S^1 \times D^2$, whose geometric intersection matrix takes now the form $H_1^2 \cdot H_1^1 =1$,  $H_1^2 \cdot H_2^1 = 2$, and $H_2^2 \cdot  H^1_ 2 = 1$, $H^2_2 \cdot  H^1_3 = 2$.

\begin{figure}

\centering
\includegraphics[scale=0.6]{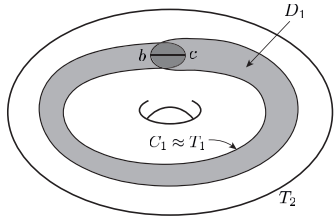}

\caption{Disks inside $Wh^3$.} \label{fig:2}

\end{figure}

This procedure can continue  indefinitely, getting an infinite matrix which is of the form
difficult id + nilpotent. But the problem is that the resulting object is not our manifold $Wh^3$ (since it is 2-dimensional and it is even not locally-finite).

However, one can cleverly manipulate the process in order to get the desired result. Firstly,  after each one
of the infinitely many steps, one may add  additional handles of index 1, 2 and 3, in order to obtain a 3-dimensional object. Then, it can be shown that, by paying close attention to details, one can actually manage to obtain a handlebody decomposition for
the Whitehead manifold which is {\sc proper} (i.e. it does not accumulate at finite distance), and whose geometric intersection matrix is made of a main
difficult id + nilpotent part, and of  some additional easy id + nilpotent parts
(which do not interfere with the main difficult part).

To conclude  our digression on this interesting manifold, we outline some of the developments that have occurred since the original input given by the appearance of $Wh^3$. First of all,  in the 60s, McMillan \cite{McMill} constructed uncountably many contractible open 3-manifolds with no two homeomorphic,  most of which are not universal covers of closed 3-manifolds, just because  there are only countably many  closed 3-manifolds and, therefore, only countably many open contractible
3-manifolds which cover closed 3-manifolds. But the first concrete examples of such manifolds (the so-called {\em genus-one Whitehead manifolds}) were constructed only afterwards in \cite{Mye}: these manifolds admit no non-trivial, free, and properly
discontinuous group actions, and thus they cannot cover non-trivially a non compact 3-manifold. Genus-one
Whitehead manifolds are irreducible, contractible, open manifolds which are monotone unions of solid tori but
are not homeomorphic to $\mathbb R^3$. The class of such examples was further enlarged by Wright in \cite{Wri}, where he constructed, for  each  $n \geq 3$,   specific  examples  of  contractible  $n$-manifolds  (called {\em Whitehead-type $n$-manifolds}) similar  to  Whitehead's  original  3-manifold (obtained, for instance, almost in the same way as $Wh^3$, just by considering solid $n$-tori instead of 2-dimensional ones)  which  cannot  non-trivially  cover  any  space. Finally, in \cite{FG}, Funar and Gadgil further studied Whitehead-type $n$-manifolds (that are open, contractible and not simply connected at infinity), and proved that there are uncountably many Whitehead-type $n$-manifolds for any  $n\geq 5$, and  that there are uncountably many such manifolds that are not geometrically simply connected!

\subsection{Dehn-exhaustibility and the QSF property}

We introduce next a notion which will be very useful for us here.

\begin{defi}\label{dehn}

A smooth open  $n$-manifold $M^n$ is said to be {\bf{Dehn-exhaustible}}
if for every  compact subset $k \subset M^n$ we can find a \textbf{compact} bounded $n$-manifold $K^n$ with $\pi_1 K^n=0$ , entering in the following
commutative diagram
$$\begin{diagram}
k &&\rTo^i(4,2)&&M^n \\
    &\rdTo _ j& &\ruTo_f\\& & K^n\end{diagram}  \e(1-5)$$
which is such that
\begin{itemize}
\item $i$ is the canonical inclusion and $j$ is an inclusion too,

\item $f$ is a smooth \textbf{immersion},
\item the following so-called {\bf{Dehn-condition}} is fulfilled:
$$ j(k) \cap M_2(f) = \emptyset ,\e(1-6)$$ where $M_2(f)$ is
the set of  points $x \in K^n$ such that ${\rm{card }} \{  f^{-1} f(x)\}> 1$.
\end{itemize}
\end{defi}

N.B. We will also need the related set of \textbf{double points} $M^2(f) \subset K^n \times K^n$, where $M^2(f)$ is the
set of points $ (x_1, x_2) \in K^n \times K^n $ such that $x_1 \neq x_2$ and $f(x_1) = f(x_2)$. This set comes with an obvious  projection on the first factor
$K^n \times K^n \supset M^2(f) \to M_2(f) \subset K^n$.

\vspace{3mm}

 There is quite some history behind this Definition  \ref{dehn}. In the nineteen-twenties, when topology was in its infancy, Max Dehn published a paper,
 a lemma of which had the following quite striking corollary: ``A knot $i: S^1 \hookrightarrow \mathbb S^3$ is unknotted if and only if $\pi _1 (\mathbb S^3
 - i(S^1)) = \mathbb Z$''. In the old days, this was called ``the fundamental theorem of knot theory'', and with Papakyriakopoulos' ``sphere theorem'', there is also an analogous result for links in $\mathbb S^3$.

 But few years later it happened that the proof of the lemma in question, called after that ``Dehn's lemma'', was shown to be irreparably wrong. Then, in the nineteen-fifties came the big breakthrough
 of Papakyriakopoulos, who not only proved Dehn's lemma \cite{Pap} but, in the same breath proved the sphere theorem. This opened the first mature period of 3-manifold topology before the Thurston revolution and its sequels appeared on the scene. One should also mention that a much more transparent version of the proof of Dehn's lemma was provided by Shapiro and Whitehead \cite{S-W}.

 But then, a bit later, while trying, in depth, to understand what was going on in the sphere theorem, John Stallings was lead to his own theorem:
 ``A torsion-free group with infinitely many ends is a free product'' \cite{St2, St_libro}, one of the big classical results in geometric group theory. And to finish this little historical prentice, Misha Gromov gave a very short crisp proof of Stallings theorem, using minimal surfaces \cite{Grom-ICM}.

Also, a few years after the appearance of \cite{Po2-duke} , Brick and
Mihalik \cite{BM1} abstracted the notion \textsc{qsf} (meaning
\textit{\textbf{quasi-simple filtration}}) from the earlier work
of Casson \cite{Ger-St} and of  the second author (V.P.)
\cite{Po2-duke, Po3-JDG}, running as follows:

\begin{defi}\label{qsf} A locally-compact simplicial complex $X$ is said to be \textsc{qsf} ({\bf{quasi-simply filtered}}) if for any compact
subcomplex $k \subset X$ there is a simply-connected compact
(abstract) complex $K$ endowed with an inclusion $k \overset {j}
{\longrightarrow } K$ and with a simplicial map $K \overset {f}
{\longrightarrow } X$ satisfying the Dehn-condition $M_2(f) \cap
j(k) = \emptyset$, and entering in a commutative diagram like (1 -- 5)
above (but now with a map $f$ which is no longer an immersion, but just a
simplicial map).
\end{defi}

In other words, a locally finite simplicial complex $X$ is
\textsc{qsf}  if it
satisfies something like the Dehn-exhaustibility, with the condition on $f$ relaxed from
immersion to $f$ being a mere simplicial map (see here \cite{BM1,
FO2, Ot_surv, St3}). Most of the good virtues of the Dehn-exhaustibility are preserved for the more general \textsc{qsf},
like for instance the implication $$V^3 \mbox { open, irreducible
and \textsc{qsf}} \Longrightarrow V^3 \simeq \mathbb R^3.\e(1-7)$$
But what one gains is very valuable, since, unlike Dehn-exhaustibility,
the \textsc{qsf} property turns out to be a group theoretical,
presentation-independent notion: if $K_1, K_2$ are two presentations
 (i.e. two presentation complexes) for the same finitely presented
  group $\Gamma$, then $\widetilde K_1 \in \mbox
{\textsc{qsf}} \iff \widetilde K_2 \in \mbox {\textsc{qsf}}$ \cite{BM1}.

\begin{defi}\label{qsf_groups}
If that happens, we say that the group $\Gamma$ itself
is \textsc{qsf}.
\end{defi}

Now, in order to better perceive all the previously defined notions, we list  several examples of spaces and groups which satisfy or do not satisfy these conditions. Of course, Euclidean spaces  are the easiest examples of spaces which are simply connected at infinity (in dimension $\geq 3$), \textsc{gsc}, \textsc{wgsc}, Dehn-exhaustible and \textsc{qsf}. On the other hand, as already said, the Whitehead manifold $Wh^3$, genus-one Whitehead manifolds  and Whitehead-type $n$-manifolds are examples of open manifolds which are not simply connected at infinity nor \textsc{gsc} nor  \textsc{qsf} (but they are \textsc{wgsc}). A funny example of a simplicial complex which is \textsc{qsf} but not \textsc{wgsc} is the universal cover of the 2-complex associated to the following presentation of $\mathbb Z = \langle a,b | baba^{-1} b^{-1}\rangle$ (see Figure \ref{fig:3}). In the 80s M. Davis \cite{Da} constructed the very first examples of (high-dimensional) compact aspherical manifolds (and finitely presented groups) whose universal covering spaces are not simply connected at infinity; on the other hand (most of) these compact manifolds  have hyperbolic or CAT(0) fundamental groups, and so Davis' examples are \textsc{wgsc} (and hence \textsc{qsf}). The striking result by the second author (V.P.), announced in \cite{Po_QSF_survey} and proved in \cite{Po_QSF1_Geom-Ded, Po_QSF2, Po_QSF3}, states that actually {\bf {all}} finitely presented groups are \textsc{qsf} (for easier, direct but partial results which concern just specific geometric classes  of finitely presented groups that are \textsc{qsf} see \cite{BM1, FO2, MT, St3}).

 \begin{figure}

\centering

\includegraphics{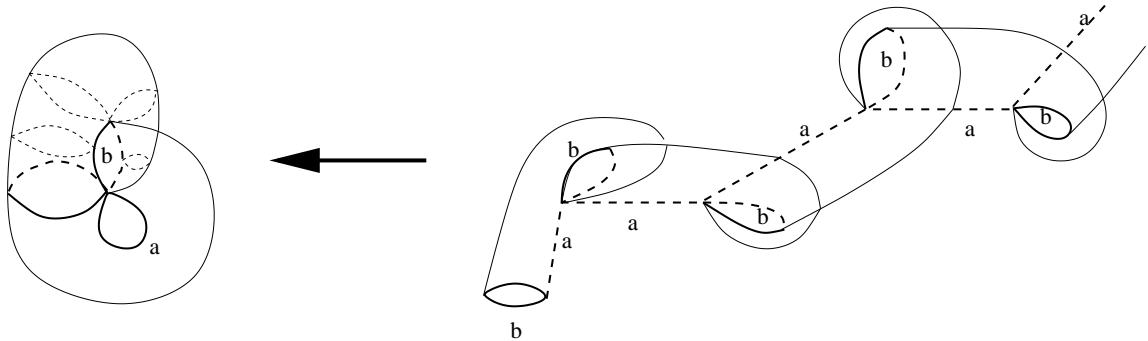}

\caption{A presentation 2-complex for  $\mathbb{Z}=\langle a,b | baba^{-1}b^{-1}\rangle$, and its  universal covering space, a \textsc{qsf} space that is \textbf{not} \textsc{wgsc}. In fact,  in the process of killing any loop $b$ of  the universal cover, one always creates another loop which can be killed by creating another loop, indefinitely.} \label{fig:3}

\end{figure}

\subsection{On  Dehn's lemma}

 So, we go back now to Dehn's lemma. In a modernised form, it can be stated as follows, and for  a proof see \cite{Po2-duke}.

 \begin{lem}[Dehn's lemma \`a la Po, \cite{Po2-duke}]\label{lemma4}
 Any open simply-connected 3-manifold $V^3$ which is Dehn-exhaustible, is also \textsc{wgsc}, and hence simply connected at infinity too.
 \end{lem}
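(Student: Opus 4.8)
The plan is to prove the single implication ``$V^3$ Dehn-exhaustible $\Rightarrow$ $V^3$ is \textsc{wgsc}'', since the final clause is then exactly the elementary three-dimensional fact recalled earlier (an open \textsc{wgsc} $3$-manifold is simply connected at infinity). To verify Definition \ref{wgsc} it suffices to produce, for each compact $k \subset V^3$, a \emph{compact simply-connected submanifold} $L$ with $k \subseteq L \subseteq V^3$: feeding the enlarged compactum $k_n \cup L_{n-1}$ into this construction at the $n$-th stage yields a nested exhaustion $L_1 \subset L_2 \subset \cdots$ by simply-connected compacta, as required. So fix $k$, thicken it to a compact codimension-zero submanifold, and invoke Definition \ref{dehn}: we obtain a compact simply-connected $3$-manifold $K^3$, an inclusion $j\colon k \hookrightarrow K^3$, and a smooth immersion $f\colon K^3 \to V^3$ with $f\circ j$ the inclusion of $k$, subject to the Dehn condition $j(k)\cap M_2(f)=\emptyset$. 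In particular $f$ restricts to an embedding on a neighbourhood of $j(k)$, carrying it onto a neighbourhood of $k$ in $V^3$.

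After a small regular homotopy supported away from $j(k)$, I would place $f$ in general position, so that its self-intersection locus $M_2(f)\subset V^3$ is a $2$-dimensional \emph{double surface} with finitely many triple points, the corresponding source locus $\Sigma\subset K^3$ being a surface disjoint from $j(k)$. The goal is then to convert this immersion into an embedding of a (possibly modified) compact simply-connected $K'$ onto a submanifold $L:=f(K')$ that still contains $k$, by excising the double surface through cut-and-paste along $\Sigma$. I would run this as an induction on a well-founded complexity of $M_2(f)$, ordered lexicographically by the number of triple points, then the number of double curves, then the total genus of the double surfaces.

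For the inductive step, choose an innermost component of the double surface. Since $K^3$ is simply connected, its boundary curves are null-homotopic in $K^3$; here the loop theorem and Dehn's lemma of Papakyriakopoulos enter, replacing the a priori only \emph{immersed} spanning disks by genuinely \emph{embedded} compressing disks, which can then be used to compress the double surface down to embedded disks. A cut-and-paste that separates the two crossing sheets along such an embedded disk strictly lowers the chosen complexity; because every move is supported away from $k$, the copy of $k$ remains embedded throughout, and because the surgeries are performed along disks that bound in the simply-connected $K^3$, a van Kampen computation shows the modified domain stays simply connected. Iterating until the double locus is empty turns $f$ into an embedding and delivers the desired $L$.

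The main obstacle is precisely the bookkeeping that makes this induction go through: one must check that each separation move \emph{strictly} decreases the complexity without creating new self-intersections near $k$, while \emph{simultaneously} preserving simple connectivity of the domain at every stage. The decisive point — and the reason this is ``Dehn's lemma \`a la Po'' — is the use of the loop theorem to upgrade the immersed compression data coming out of the Dehn condition into embedded compressing disks; this is exactly the step that forces us into dimension three and exploits the simple connectivity of $K^3$. Choosing the right complexity and proving its strict descent under cut-and-paste is the delicate part on which the whole argument hinges, and I would expect to spend most of the effort there.
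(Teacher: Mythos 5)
Your overall strategy is fine in its outer layer: reducing \textsc{wgsc} to the statement ``every compact $k\subset V^3$ sits inside a compact simply-connected $L\subset V^3$'' and then nesting, and deferring the last clause to the earlier-quoted fact that open \textsc{wgsc} $3$-manifolds are simply connected at infinity. (For the record, the paper itself gives no proof of Lemma \ref{lemma4}; it refers to \cite{Po2-duke}, so the comparison here is with that argument.) But your construction breaks at the very first geometric step, and the break is fatal to everything after it. The map $f\colon K^3\to V^3$ furnished by Definition \ref{dehn} is a \emph{codimension-zero} immersion, i.e.\ a local diffeomorphism of a compact $3$-manifold into a $3$-manifold. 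No general-position perturbation can make its self-intersection locus $2$-dimensional: if $f(x)=f(y)$ with $x\neq y$, then $f$ identifies whole open neighbourhoods of $x$ and of $y$, so $M_2(f)$ is a codimension-zero ($3$-dimensional) region of $K^3$ whose frontier lies in $f^{-1}\bigl(f(\partial K^3)\bigr)$. The ``double surface with triple points, compressed along its boundary curves'' picture you invoke is the general-position picture for \emph{codimension-one} immersions (immersed surfaces in $3$-manifolds); it does not exist here. With it goes your well-founded complexity, the innermost-component argument, and the cut-and-paste move ``separating the two crossing sheets'' --- none of these is defined when the two sheets overlap on an open set rather than crossing transversally.

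The objects a genuine argument manipulates are different: the immersed boundary surface $f|_{\partial K^3}$ (which in general position does have double curves and triple points), the complementary regions of $f(\partial K^3)$ in $V^3$, and the fact that over $V^3\setminus f(\partial K^3)$ the map $f$ is a proper local homeomorphism, hence a finite covering onto the components it hits. The Dehn condition enters exactly here: it forces $k\cap f(\partial K^3)=\emptyset$ (after pushing $j(k)$ off $\partial K^3$), so $k$ lies in a complementary region $W$ whose closure is contained in the compact set $f(K^3)$. The entire difficulty --- and the content of Po\'enaru's proof in \cite{Po2-duke}, which is run as a compression/tower induction in the style of the classical proof of Dehn's lemma, using the loop theorem on this boundary data --- is to arrange that the resulting compact region containing $k$ is \emph{simply connected}. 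That is precisely the point your sketch waves at with ``a van Kampen computation shows the modified domain stays simply connected'': in codimension zero this is not a computation but the theorem itself, and once the double-surface scaffolding is removed your proposal contains no mechanism for it.
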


 Note that the classical Dehn's lemma says that if an embedding $S^1 \subset M^3$ extends to a map $f: D^2 \to M^3$ which satisfies the Dehn-condition $S^1 \cap M_2(f) = \emptyset $, then $S^1$ is unknotted in $M^3$. Our Definition \ref{dehn} stems from these things.

 Next, here is how Dehn-exhaustibility and \textsc{gsc} come together. The result we will present now, as such, is unrelated to group theory. But then, on the one hand, things like it will be used later in this survey in order to prove group-theoretical results. On the other hand, it  should serve as an easy paradigm of how the
 $\Phi/\Psi$-manipulation can be used,  particularly in group theory.

 For the sake of the simplicity of exposition, we give here the next result only for the case of open smooth manifolds, although the definition of Dehn-exhaustibility can be extended to more general contexts, and then the lemma below too. And this extension will be useful.

 \begin{lem}[Stabilization Lemma]\label{lemma5}
 Let $M^n$ be  an open smooth manifold which is such that we can find a $p \in \mathbb Z_+$ with the property that $M^n \times B^p$ is \textsc{gsc}. Then $M^n$ is Dehn-exhaustible, with, in the context of diagram (1 -- 5), a smooth $K^n$ and a smooth immersion $f$.
 \end{lem}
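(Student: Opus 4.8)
The plan is to turn the hypothesis ``$M^n \times B^p$ is \textsc{gsc}'' into a simply-connected singular $n$-dimensional model over $M^n$, and then to straighten that model into an immersion by the zipping machinery of Lemmas \ref{lemma1}--\ref{lemma3}. Concretely, I would first observe that it suffices, for each compact $k \subset M^n$, to produce a compact simply-connected $n$-dimensional object $X$ (eventually an $n$-manifold with boundary) together with a non-degenerate smooth/simplicial map $f_0 : X \to M^n$ and an embedding $j : k \hookrightarrow X$ with $f_0 \circ j = i$, subject to one decisive constraint: $f_0$ must admit a zipping (a folding sequence realizing $\Psi(f_0)$, in the sense of the $\Phi/\Psi$-theory) all of whose folds avoid $j(k)$.

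Granting such an $X$, the conclusion is immediate from the $\Phi/\Psi$-theory. Set $K^n := X / \Psi(f_0)$ and let $f := f_1(\Psi(f_0)) : K^n \to M^n$ be the induced map of diagram (1 -- 2). By property (i) of Lemma \ref{lemma1}, $f$ is immersive; by Lemma \ref{lemma2} the map $\pi_\ast : \pi_1(X) \to \pi_1(K^n)$ is onto, so $\pi_1(K^n)$ is a quotient of $\pi_1(X) = 0$ and $K^n$ is simply connected; and since the chosen zipping never folds along $j(k)$, the map $f$ is injective on a neighbourhood of $j(k)$, i.e. $M_2(f) \cap j(k) = \emptyset$, which is exactly the Dehn-condition (1 -- 6). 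A standard smoothing and thickening of the folding quotient, using Lemma \ref{lemma3} to keep the process at the first infinite ordinal, produces the smooth $K^n$ and smooth immersion $f$ demanded by the statement, and fills the diagram (1 -- 5).

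The substance of the proof is therefore the construction of $X$ out of the \textsc{gsc} product, and here I would exploit the product structure directly. Since \textsc{gsc} $\Rightarrow$ \textsc{wgsc} (Definition \ref{wgsc}), choose a compact, simply-connected, codimension-zero submanifold $\Delta^{n+p} \subset M^n \times B^p$ with $k \times B^p \subset \mathrm{int}\,\Delta$ and, crucially, equipped with a handlebody decomposition carrying \emph{no handles of index one}. The central slice $k \times \{0\}$ gives the embedded copy $j(k)$, and the honest projection $\pi : M^n \times B^p \to M^n$ restricts over a neighbourhood of $k \times B^p$ to a map already injective there. I would then obtain $X$ and $f_0$ by compressing the $p$ ``vertical'' $B^p$-directions of $\Delta$ onto $M^n$ along $\pi$: the absence of index-one handles is precisely what forces the compressed object to remain simply-connected (no $1$-cells survive to create $\pi_1$), while the only failures of injectivity it acquires are the vertical folds manufactured by the compression --- exactly the singularities that $\Psi$ is designed to remove, and which, by construction, occur away from the central slice $k \times \{0\}$.

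The hard part is this last step: arranging the compression so that (a) the resulting source is an honest simply-connected $n$-manifold rather than a generic codimension-$p$ slice (a naive generic level set of a map $\Delta \to \mathbb{R}^p$ would be immersive but not simply-connected, and the $\Psi$-machine cannot create simple-connectivity it is not handed), and (b) \emph{all} the double points are confined to the complement of $k \times \{0\}$, so that the Dehn-condition survives the passage to $\Psi$. Both difficulties should be controlled by the no-index-one handle structure together with the room afforded by the factor $B^p$ around $k \times \{0\}$: one must descend the handle decomposition of $\Delta$ to the compressed slice while preserving the vanishing of index-one handles, and then choose the zipping strategy (Lemma \ref{lemma3}) so that it propagates inward from $\partial \Delta$ and terminates before reaching the protected slice. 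This coupling of the Morse/handlebody bookkeeping on $\Delta$ with the $\Phi/\Psi$-zipping on its projection is the crux, and is where the genuine work lies.
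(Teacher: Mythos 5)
Your overall framework is the right one, and it matches the paper's: produce an $n$-dimensional source over $M^n$, quotient by $\Psi$, and invoke Lemma \ref{lemma1} for immersivity and Lemma \ref{lemma2} for simple connectivity of the quotient. But the step you call ``decisive'' is backwards, and this is a genuine gap. The Dehn condition $M_2(f)\cap j(k)=\emptyset$ for $f=f_1(\Psi(f_0)):K^n=X/\Psi(f_0)\to M^n$ says that for every $x\in j(k)$ the \emph{whole} $\Phi(f_0)$-class of $x$ gets collapsed to a single point of $K^n$; equivalently, over $k$ one needs $\Phi(f_0)=\Psi(f_0)$. A zipping ``all of whose folds avoid $j(k)$'' achieves the exact opposite: any sheet of $X$ lying over $i(k)$ other than $j(k)$ then stays unidentified with $j(k)$, and its image in $K^n$ creates a double point sitting on $j(k)$, destroying the Dehn condition. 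Your condition would suffice only if $f_0$ had no such extra sheets at all, i.e. $f_0^{-1}(i(k))=j(k)$; but no compression of $\Delta^{n+p}\supset k\times B^p$ can satisfy this, since the fibers $\{m\}\times B^p$, $m\in k$, lie entirely in $\Delta$, so the part of $\Delta$ near $k\times\partial B^p$ (and much else) inevitably contributes further $n$-dimensional material projecting onto $k$. So, as formulated, either the Dehn condition fails or the required $X$ does not exist; the crux you defer at the end is not merely hard, it is not achievable along the route you set up, because within a single compact piece there is no reason at all that the sheets over $k$ can be zipped to one another.

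The paper resolves both of your difficulties by two devices that are missing from your proposal. First, instead of a compact piece and a ``compression'', it takes the $n$-skeleton $Z^n_\infty$ of a triangulation of the \emph{whole} product $M^n\times B^p$ chosen so that $M^n\times\{\ast\}$ is a subcomplex, together with the projection $\pi_\infty:Z^n_\infty\to M^n$ made simplicial and non-degenerate; this disposes of your worry (a), since the $n$-skeleton of a simply connected complex is still simply connected (for $n\geq 2$), so the compact pieces $Z^n_i$ coming from the \textsc{wgsc} exhaustion need no handle bookkeeping --- indeed only \textsc{gsc}$\Rightarrow$\textsc{wgsc} is used, not the absence of index-one handles. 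Second, and decisively, the slice $M^n\times\{\ast\}$ is a global cross-section of $\overline{\pi}_\infty:Z^n_\infty/\Psi_\infty\to M^n$, and this forces $\Phi_\infty=\Psi_\infty$; a compactness argument then produces a function $\mathcal N:\mathbb Z_+\to\mathbb Z_+$ with $\Psi_{\mathcal N(j)}|_{Z^n_j}=\Phi_j$, and choosing $j$ large enough that $\pi_\infty^{-1}(k)\subset Z^n_j$, the compact simply connected quotient $K^n=Z^n_{\mathcal N(j)}/\Psi_{\mathcal N(j)}$ satisfies the Dehn condition precisely because all the sheets over $k$ have been zipped \emph{together} onto the cross-section: the folds at $k$ are completed, not avoided. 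The necessity of passing from $Z^n_j$ to the much larger $Z^n_{\mathcal N(j)}$ before those identifications close up is exactly the phenomenon your local construction cannot see, and this cross-section-plus-compactness mechanism is the actual content of the Stabilization Lemma.
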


 For $n=3$, the complete proof of Lemma \ref{lemma5} is given in \cite{Po2-duke}. But the arguments extend easily to Lemma \ref{lemma5} itself. We will present this below, after we open  a prentice.

Notice, first, that Lemma \ref{lemma5} implies that when it comes to the Whitehead manifold $Wh^3$, then for any $p \in \mathbb Z_+$ we {\textbf{cannot} have $Wh^3 \times B^p$ \textsc{gsc} because, in this case, combining Lemma \ref{lemma4} and Lemma \ref{lemma5} we would get that $\pi _1 ^{\infty} Wh^3 =0$, which is false. But then  $Wh^3$ itself cannot be \textsc{gsc} either.

Another comment is that we can extend Definition \ref{dehn} to a cell-complex context.

\begin{defi}[Definition \ref{dehn} extended]\label{extended}
Let $M^n$ be a simplicial complex which is {\bf{pure}}, i.e. each simplex $\sigma^{\lambda}$, with $\lambda <n$, is face of a simplex $\sigma ^n \subset M^n$. We say that $M^n$ is {\bf{Dehn-exhaustible}} if for every compact subcomplex $k \subset M^n$ there is a commutative diagram
$$\begin{diagram}
k &&\rTo^i(4,2)&&M^n \\
    &\rdTo _ j& &\ruTo_f\\& & K^n\end{diagram}\e(1-8)$$
where $K^n$ is a compact simply-connected \textbf{pure} complex, $f$ a simplicial \textbf{immersion} and the Dehn-condition $j(k) \cap M_2(f)=\emptyset$ is satisfied.
\end{defi}

 \begin{proof}[Proof of Lemma \ref{lemma5}.]
 (N.B. This proof, which is essentially the same as the argument in \cite{Po2-duke}, extends to the context of pure complexes and of Definition \ref{extended}.)

  When we consider $M^n \times B^p \supset M^n \times \{\ast\} \simeq M^n$ , with  $\{\ast \}=$ the center of $B^p$, then $M^n \times B^p$ can be endowed with a smooth triangulation, compatible with the DIFF structure, so that $M^n \times \{\ast\} \simeq M^n$ is a subcomplex. Also, because $M^n \times B^p$ is \textsc{gsc}, and hence \textsc{wgsc}, the simplicial complex $M^n \times B^p$ is exhausted by a union of compact simply-connected subcomplexes of codimension zero $Z_1 \subset Z_2 \subset \cdots \subset M^n \times B^p$. We move next to the $n$-skeleta $Z^n_1 \subset Z^n_2 \subset \cdots \subset \cup_i Z^n_i = Z^n_{\infty} =
  \{ \mbox {the $n$-skeleton of } M^n \times B^p\} \supset M^n \times \{\ast \}$ (which is a subcomplex of codimension zero).

After appropriate subdivisions and perturbations, like in \cite{Po2-duke, Po_QSF3}, one can ask that
$$ \pi_{\infty} = \pi |_{Z^n _{\infty}} : Z^n_{\infty} \to M^n\e(1-9)$$
be a simplicial non-degenerate map.

In the wake of (1 -- 9), we also define  $\pi_{i} = \pi |_{Z^n _{i}} $. For each $i\leq \infty$ we have the equivalence relations
$\Phi _i = \Phi (\pi_i)$ and $\Psi _i = \Psi (\pi_i)$. Here, for $i < \infty$, we clearly get $\Phi _{i} = \Phi_{\infty} |_{Z^n_i}$, but, generally speaking, we only have an inclusion $\Psi _{i} \subset  \Psi_{\infty} |_{Z^n_i}$.

For our triangulated $M^n$ we have the following commutative diagram of simplicial maps, with $\overline{\pi}_{\infty}$ the analogous of $f_1$ from (1 -- 2):

$$\begin{diagram}
Z^n_{\infty}/ \Psi_{\infty} &&\rTo^{ \overline{\pi}_{\infty}}(4,2)&&&M^n \\
    &\luTo_ i& & &    \ruTo_{\simeq}& \\
    & &  M^n / \Psi_{\infty} &= M^n \times \{ \ast \}&\end{diagram}$$
In other words, the projection $\overline{\pi}_{\infty}$ admits a \textbf{cross-section}.

Notice that the map $i$ \textbf{has} to be bijective, since otherwise $\overline{\pi}_{\infty}$ would have singularities (i.e. it would fail to be immersive). This means that $M_2 (\overline{\pi}_{\infty}) = \emptyset$ (see Definition \ref{dehn}). With this, in the following commutative diagram
\begin{diagram}
Z^n_{\infty}         &\rTo   & Z^n_{\infty}  / \Phi_{\infty}\\
\dTo  &    \rdTo^{\pi_{\infty}\ \ \ \ \ \quad \quad}    \ruTo   &\dTo_{\simeq}\\
  Z^n_{\infty}  / \Psi_{\infty}       &\rTo_{\simeq}   & M^n
\end{diagram}
both horizontal and vertical maps into $M^n$ are bijective and hence, essentially because there is a cross-section, we find that the following two equivalence relations have to coincide
$$\Phi _{\infty} = \Psi_{\infty}.$$
There is then, starting from the equality $\Phi _{\infty} = \Psi_{\infty}$, an easy ``compactness argument" (see \cite{Po2-duke}) which shows that there exists a function among positive integers $\mathcal N: \mathbb Z_+\to \mathbb Z_+$ such that $\mathcal N(j)>j$, with the following property: $\Psi _{\mathcal N(j)} |_{Z^n_{j}} = \Phi _j$ for all $j$'s.

Fix now a compact subset $k \subset M^n$. There is an $i < \infty$ such that $\pi_i (Z_i ^n )\supset k$, and since $\pi_{\infty}^{-1} (\pi_i (Z^n_i))$ is compact too, we find some $j >i$ such that $Z_j^n \supset \pi_{\infty}^{-1} (\pi_i (Z^n_i)) \supset \pi_{\infty}^{-1} (k)$. Our set-up is such that, whenever we have $(x_1, x_2) \in M^2(\pi_{\infty})$ and $x_1 \in k$, then $x_2 \in Z^n_j$. For our $j$ we go now to the $\mathcal N(j) > j$ introduced above, coming with $\Psi _{\mathcal N(j)} |_{Z^n_{j}} = \Phi _j$, and to the following sequence of maps and inclusions:
$$k \subset Z^n_j / \Phi _j = Z^n_j / \Psi _\mathcal N \subset Z^n_{\mathcal N}/ \Psi _{\mathcal N} \overset{\overline{\pi}_{\mathcal N}}{\longrightarrow} M^n,$$
with  $\overline{\pi}_{\mathcal N}$ like the $f_1$ in (1 -- 2). Here $Z^n_{\mathcal N}/ \Psi _{\mathcal N}$ is a compact $n$-manifold which, by Lemma \ref{lemma2}, is simply connected, and $\overline{\pi}_{\mathcal N}$ is immersive. We have also that $\pi_{\infty}^{-1} (k) \subset Z^n_j$ and $\Psi _{\mathcal N} |_{Z ^n_j}= \Phi _j$.

This means that $k$ \textbf{cannot} be involved with the double points of $\overline{\pi}_{\mathcal N}$, i.e. $k \cap M_2 (\overline{\pi}_{\mathcal N}) = \emptyset$ (Dehn-condition). With this, the commutative diagram
$$\begin{diagram}
k &&\rTo^i(4,2)&& M^n \\
    &\rdTo_j& &\ruTo_{\overline{\pi}_{\mathcal N}}\\& &Z^n_{\mathcal N} / \Psi _{\mathcal N}
\end{diagram}$$
with $i,j$ the obvious inclusions, has all the features of (1 -- 5) in Definition \ref{dehn}. This ends the proof.
 \end{proof}
\begin{rem}
 The argument above is a paradigm for many such, occurring in our group theoretical context. And then the ``compactness argument''
 will occur again, coming with the function $\mathcal N: \mathbb Z_+\to \mathbb Z_+$. It is possibly worthwhile to try to connect asymptotic properties of the function $\mathcal N$, like growth, with the asymptotic properties of the corresponding finitely presented group $G$.
\end{rem}

 \subsection{Presentations and REPRESENTATIONS of groups}

 A combinatorial presentation of a finitely presented group $G$ is a given finite system of generators $g_i^{\pm 1}$ and relators $\mathcal R_j$ for $G$. Traditionally, to this combinatorial presentation, one attaches a geometric presentation. This is usually a finite 2-complex $K^2$ with one vertex and with 1-cells and 2-cells corresponding, respectively, to the $g_1 ^{\pm 1}, g_2 ^{\pm 1}, \cdots, g_n ^{\pm 1}$ and $\mathcal R_1 ^{\pm 1}, \mathcal R_2 ^{\pm 1}, \cdots, \mathcal R_p ^{\pm 1}$. Classically, the 1-skeleton of $\widetilde K^2$ is the Cayley graph of $G$.

 In our present survey we will deal, mostly, with completely general finitely presented groups $G$, but we will need to be quite choosy when it comes to geometric presentations. Such {\textbf{presentations}} will always be, for us, 3-dimensional finite complexes $M^3(G)$ such that $\pi_1 M^3(G) = G$. And since we want to be able to deal with any finitely presented group $G$, the $M^3(G)$ has to have \textbf{singularities}, by which we mean non-manifold points. Actually, $M^3(G)$ will be a compact \textbf{singular 3-manifold} with a very precise kind of singularities, the so-called ``\textbf{\textit{undrawable singularities}}}'' which were a lot used by the second author in earlier work, see here \cite{Gab, Ot-Po1, Po4-Top1}.  They are displayed as little shaded rectangles in Figure \ref{fig:4}.

 \begin{figure}

\centering
\includegraphics[scale=0.9]{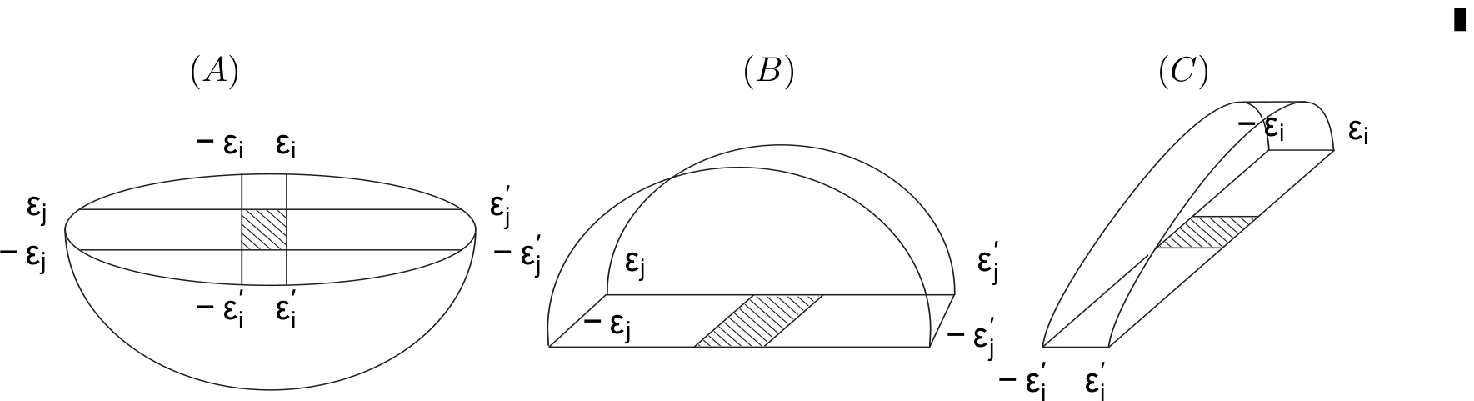}

\caption{The $N^3(S) \subset M_0^3(G)$ is gotten by putting these three 3-dimensional pieces together. The rectangles $[ \epsilon_j, -\epsilon_j; \epsilon_j', -\epsilon_j' ]$, $[ \epsilon_i, -\epsilon_i; \epsilon_i', -\epsilon_i']$ in (B), (C), respectively, are glued to the ones in (A). The immortal singularity $S$ is shaded. The $\partial N^3(S) = S^1 \times S^1 -\rm{int} D^2$ is made up from three contributions: the half-sphere in (A) and the two curved arches occurring in (B) and (C).} \label{fig:4}

\end{figure}

 Another way to describe them is the following. Start with $R_1 ^2 = (x=0, y, z), R_2^2 = (x, y=0, z) \subset \mathbb R^3 =(x,y,z)$ and with their canonical half-line $\frac{1}{2} L = (x=0, y=0, z\leq 0) \subset R_1^2 \pitchfork  R_2^2$. We consider then the obvious map $$f: K^2 = R_1^2 \underset{\frac{1}{2}L}{\cup} R_2^2 \longrightarrow  \mathbb R^3, \e(1-10)$$ which is immersive, except at its singular point $s = (0,0,0)$. This is a 2-dimensional undrawable singularity and its 3-dimensional thickening is the kind of singularity $M^3(G)$ possesses.

 Here is a typical example of such a 3-dimensional presentation $M^3(G)$ of $G$. Start with a combinatorial presentation, by generators and relators of $G$, and to it we will attach a particular 3-dimensional geometric presentation which we will call $M_0^3(G)$.

 Corresponding to the generators $g_1 ^{\pm 1}, g_2 ^{\pm 1}, \cdots, g_n ^{\pm 1}$ we pick up a 3-dimensional handlebody of genus $n$, i.e. a smooth bretzel $T^3$ with $n$-holes. To the relations corresponds a generic immersion
$$ \sum_{i=1} ^r S_i^1
\overset {\Upsilon} {\longrightarrow} \partial T^3, \e(1-11)$$
 which we immediately thicken into
$$ \sum_{i=1} ^r (S_i^1 \times [-\epsilon _i,\epsilon _i])
\overset {\Upsilon} {\longrightarrow} \partial T^3. \e(1-12)$$

We can actually always slightly change the original combinatorial presentation so that for each $i$, the individual maps $\Upsilon |_{S^1_i}$ and $\Upsilon |_{S^1_i\times [-\epsilon _i,\epsilon _i] }$ actually \textbf{inject}. This will be assumed from now on (and it will make life easier).

Finally, for each relator $\mathcal R_i$ we attach to $T^3$ a 2-handle $D_i^2\times [-\epsilon _i,\epsilon _i]$, of attaching zone $\partial D^2_i\times [-\epsilon _i,\epsilon _i]= S_i^1 \times [-\epsilon _i,\epsilon _i]$, along $\Upsilon |_{S^1_i\times [-\epsilon _i,\epsilon _i] }$. This final object is our $M_0 ^3 (G)$, and this will be the typical example of our geometric \textbf{\textit{presentations of groups}}.

This object is singular and, unless $G$ is the fundamental group of a smooth 3-manifold, a very exceptional case indeed, singularities have to be there. This is how they occur. Any transversal contact $$ \Upsilon (S^1_{i_1} ) \pitchfork \Upsilon (S^1_{i_2}) \ni x \in \Upsilon (M_2 (\Upsilon)), i_1 \neq i_2,$$
 gives rise to connected components of $\Upsilon (M_2 (\Upsilon))$ (with $\Upsilon$ like in (1 -- 12)), which are little squares $ [-\epsilon _{i_1},\epsilon _{i_1}] \times [-\epsilon _{i_2},\epsilon _{i_2}] \subset \Upsilon (S^1_{i_1} \times [-\epsilon _{i_1},\epsilon _{i_1}]) \cap \Upsilon (S^1_{i_2} \times [-\epsilon _{i_2},\epsilon _{i_2}])$. These non-manifold regions for $M_0^3(G)$, which certainly are 3-dimensional singularities of the undrawable type mentioned above, will also be called \textbf{\textit{immortal singularities}}, so as to distinguish them from the singularities occurring in the $\Phi / \Psi$  little theory, that are non-immersive points at the source, and which we will call \textbf{\textit{mortal singularities}}, since these singularities do get killed by the $\Psi \subset \Phi$ equivalence relation (i.e. by the zipping).

Let us be more precise at this point. The \textit{immortal singularities} are points
$x$ of some $n$-dimensional space $X$, where $X$ fails to be an $n$-dimensional manifold, locally; while
\textit{mortal singularities} are points $x\in X$ where a non-degenerate map $f: X \to Y$
fails to be immersive, locally again.

Coming back to the presentation $M_0^3(G)$, if by any means $G$ is the fundamental group of a closed (or just compact) 3-manifold $M^3$, then we will take as $M_0^3(G)$  the smooth $M^3$ itself.

The $M_0^3(G)$  described above is an instance of what we will call a \textbf{\textit{singular handlebody}} $P^n$ (for $n \geq 3$). Such an object is, by definition, a union of $n$-dimensional handles of index $\lambda = 0, 1, 2, \cdots, n$, call them generically $h^{\lambda} = B^{\lambda} \times B ^{n-\lambda}$. They have attaching zones $\partial _a h ^{\lambda} \subsetneq \partial h ^{\lambda}$, defined by $\partial _a h^{\lambda} = \partial B ^{\lambda} \times B ^{n-\lambda}$ and lateral surfaces $\delta h ^{\lambda} = B^{\lambda} \times \partial B ^{n-\lambda}$.  These handles $h^{\lambda}$ are put together into $P^n$, according to the following rules of the game:
\begin{enumerate}
\item as usual, $\partial _a h ^{\lambda}$ is glued to the free part of $\underset{\mu < \lambda}{\cup} \partial h ^{\mu}$;
\item but we allow now for non-trivial intersections $\partial _a h ^{\lambda}_i \cap \partial _a h ^{\lambda}_j $, for $i\neq j$ and $\lambda \geq 1$.
\end{enumerate}

\noindent This creates singularities, of a more general type than the undrawable ones.

The $M_0^3(G)$ are only one class of group-presentations, but, for technical reasons, another class, the ``\textbf{good-presentations}'', will soon have to be introduced too. There are good reasons, as we shall see, to use our present 3-dimensional geometric presentations of $G$, rather than the classical 2-dimensional presentations.

We are finally ready to introduce the \textsc{representations} of $G$ (for convenience, we will skip from now on the terms ``topological" and ``inverse") . The $G$ will invariably be for us a finitely presented group of the most general type.

\begin{defi}\label{repr}
Start with an arbitrary geometric presentation of a finitely presented group $G$, $M^3(G)$ and go then to its universal covering space $\widetilde {M^3(G)}$.
A \textsc{representation} of  $G$ is a \textbf{non-degenerate} simplicial
map $$ X^p \overset {f} {\longrightarrow} \widetilde {M^3(G)} \e(1-13)$$
where:
\begin{itemize}

\item[(a)] the space $X^p$ is a cell-complex of dimension $p=2$ or $3$  which is \textsc{not} necessarily locally finite, but which is  geometrically simply connected ({\sc gsc}). [Notice that since the map $f$ is non-degenerate, we must have $p \leq 3$; but the case $p=1$ is of absolutely no interest. According to the cases $p=2$ or $p=3$, we will talk about $2$-dimensional or $3$-dimensional \textsc{representations}, and both are important for us].

\item[(b)] $\Psi(f)= \Phi(f)$.

\item[(c)] The map $f$  is ``\textbf{essentially surjective}'',
which means the following. If $p=3$, then for the closure of the image we have
$\overline{f(X^3)}= \widetilde {M^3(G)})$; while if $p=2$, it means that  $\widetilde {M^3(G)}$  can be gotten from $\overline{f(X^3)}$ by adding cells of dimensions $\lambda =2$ and $\lambda =3$. \end{itemize}
\end{defi}

Note that, in the context of a 2-dimensional \textsc{representations} of $G$, $f: X^2 \to \widetilde {M^3(G)}$,
 we may have immortal singularities of $\widetilde {M^3(G)}$ and, for $X^2$, both
 mortal and immortal singularities $x \in X^2$. In this last case, in the context of the
 present paper, the following things will always happen.

 \begin{itemize}
 \item At a singular $x \in X^2$, whether immortal or mortal, locally at $x$, our $X^2$ is like
  in (1 -- 10), namely of the form $R_1^2 \underset{\frac{1}{2}L}{\cup} R_2^2$ which we call an ``undrawable singularity'', for obvious reasons. (This is a terminology introduced long ago by Barry Mazur, during some lengthly discussions
  with the second author).

 \item Locally, at any immortal singularity $x \in X^2$, the map $f$ embeds and we have
 $f(x) \in \{$some immortal singularity of $\widetilde {M^3(G)} \}$.

 \end{itemize}

\begin{rem}\
\begin{itemize}
\item  According to Misha Gromov, groups are geometric objects defined up to quasi-isometry \cite{Grom-ICM, Grom-AsInv}. From this viewpoint, $G$ and $\widetilde {M^3(G)}$ are the same object, certainly the same object up to quasi-isometry. And with this, let us notice that while mundane group representations are arrows of the form $\{G \to \cdots \}$, going homomorphically into some other group, our \textsc{representations} take just the dual form $\{\cdots \to G\}$. And actually this is why they are called ``inverse-\textsc{representations}''. But, as a matter of taste, we prefer the capital letters to the adjective ``inverse''.

    \item We will also introduce a slightly weaker notion, the \textsc{wgsc-representations}. One proceeds exactly like in Definition \ref{repr}, but simply replaces the \textsc{gsc} condition by \textsc{wgsc}.

    \item In very simple words a \textsc{representation} is a sort of resolution of the universal cover of a presentation of $G$ in the realm of cell complexes which are \textsc{gsc} (or \textsc{wgsc}).

\item The reader may also have noticed that the group structure does not play any role in Definition  \ref{repr}, as such. But then our definition opens the door for the following possibility, which will be all-important for the more serious applications. We may have \textbf{equivariant} \textsc{representations}, where, besides the obvious action $G \times \widetilde {M^3(G)}\to \widetilde {M^3(G)}$, we also have a second free action $G \times X^p \to X^p$ such that for each $x \in X^p$ and $g \in G$, we should have the equivariance condition: $f (gx) = g f(x)$.

Such equivariant \textsc{representations} are very useful, but they are hard to get. We will come back to them soon.

\item But, once the group structure of $G$ does not play any role in Definition \ref{repr}, we may also \textsc{represent} other 3-dimensional objects, not connected to any symmetry group  $$X^p \longrightarrow Y^3$$ and then $Y^3$ may or may not be a 3-manifold. Anyway, the existence of the \textsc{representation} above, forces that we should necessarily have (and see here Lemma \ref{lemma2}) $\pi_1 Y^3 =0$.

In this context, in \cite{PT-Wh} it is shown that when we \textsc{represent} the Whitehead manifold $Wh^3$, then quite naturally the Julia sets from the dynamics of quadratic maps pop up, raising of  course the issue of a possible connection between wild topology and dynamical systems (for more details see the last section).

\end{itemize}
\end{rem}

\subsection{Good-presentations of finitely presented groups}
We introduce now the \textbf{\textit{good-presentations}} $M^3(G)$ of $G$. These will have the very useful feature that not only are $M^3(G)$, $\widetilde {M^3(G)}$ singular handlebodies, but they also admit \textsc{representations} $X^3 \to \widetilde {M^3(G)}$, where $X^3$ itself is such a singular handlebody, and hence the non degenerate map $f$ sends, isomorphically, $\lambda$-handles into $\lambda$-handles, a very useful feature indeed.

To construct such good-presentations $M^3(G)$ we start with our previous $M_0^3(G)$, which was directly related to a given combinatorial presentation of $G$.

Let $S$ be any individual immortal singularity of $M^3(G)$, a little shaded square like in Figure \ref{fig:4}. This $S$ has a canonical neighbourhood $N^3(S) \subset M^3(G)$, the detailed structure of which is shown in Figure \ref{fig:4} and which is split from the rest of $M^3_0(G)$ by a surface of genus one, namely by $$\partial N^3(S) = S^1 \times S^1 - \overset{\circ}{D^2}.$$
 [N.B.: By $\partial N^3(S) $ we mean here not the whole boundary, but a piece of it, the {\textbf{\textit{splitting locus}}}].

We will break now $M^3_0(G)$ into a union of smaller pieces than the initial handles, but which are subcomplexes of $M^3_0(G)$ $$M^3_0(G) = \bigcup_1^q \Delta _i^3 (0).\e{(1-14)}$$
 These $\Delta^3_i(0)$'s are supposed to have disjoined interiors. For each singularity $S$, the $N^3(S) $ itself is a $\Delta^3_i(0)$. All the others are copies of $B^3$, contained now inside a given handle, each. These smooth $\Delta^3_i(0)$'s come with $\rm {int} \Delta^3_i(0) \cap \partial (\mbox{handle}) = \emptyset$, something which the singular $N^3(S)$'s violate.

 The important thing is that, in the context of (1 -- 14) the following condition is fulfilled: whenever $\rm{dim} (\Delta^3_i(0) \cap \Delta^3_j(0) ) > 1$, then we also have $$ \partial \Delta^3_i(0) \cap \partial \Delta^3_j(0) = \{ \mbox{a \textbf{unique} 2-cell } \xi\}, \mbox{called a \textbf{\textit{face}}}.$$

 This is what we have gained when we refined the singular handlebody  decomposition of $M^3_0(G)$, in its original description, into (1 -- 14). We proceed next via the following steps:
 \begin{itemize}

\item[ a)] We triangulate, compatibly, the $\Delta^3_i(0)$'s and their common faces $\xi$. From this, we retain now only the 2-skeleta $(\Delta^3_i(0))^{(2)}$, $\xi^{(2)}$.

\item [ b)] Each of the 2-dimensional complexes $(\Delta^3_i(0))^{(2)}$, $\xi^{(2)}$ is changed back into a singular handlebody, via the standard procedure below:
$$\{\lambda \mbox{-dimensional simplex}\} \to \{ 3\mbox{-dimensional handles of index } \lambda \}.$$

The $(\Delta^3_i(0))^{(2)}$, $\xi^{(2)}$ become now 3-dimensional singular handlebodies denoted respectively by $\Delta ^3_i$ and $g^3$. We may assume  each $\Delta ^3_i$, $g^3$ to be \textsc{gsc} and, when this is the case, $g^3 \subset \Delta^3_i$ is now ``sub-handlebody'', the meaning of this term should be obvious.

\item[ c)] We can glue now together the $\Delta ^3_i$, $g^3$'s following the prescription given by (1 -- 14). The result is, by definition, our good presentation of $M^3(G)$ (or one of such).
\end{itemize}

\noindent When the opposite is not explicitly said, our group presentations will always be good, in the text which follows.

\subsection{Some historical remarks}
\textsc{Representations} of topological spaces were
already present at an intuitive level of the initial step in the
approach of the second author to Poincar\'e Conjecture (see \cite{Gab} and, for a
complete up-to-date overview, see \cite{Po_pre0}).
In the papers \cite{Po4-Top1, PT4-AMS} homotopy 3-spheres $\Sigma
^3$ and open 3-manifolds were represented. It is actually in \cite{Po4-Top1},
under a different name, that \textsc{representations} first
appeared (see also \cite{Po-contMath}).

Without giving all  details, let us state the first ``representation-result'' from \cite{Po4-Top1} (the so-called ``Collapsible Pseudo-Spine Representation Theorem''):

\begin{thm} [V. Po\'enaru, \cite{Po4-Top1}]\label{TOP_repr1}

Given a homotopy $3$-sphere $\Sigma^3$, one can construct a
\textsc{representation} $f\colon K^2\to\Sigma^3$,   where $K^2$ is a finite $2$-complex and
$f$ a non-degenerate simplicial map with
undrawable singularities, like in (1 -- 10), such that  the complement of $f(K^2)$ is a
finite collection of open $3$-cells,  $K^2$ is collapsible and
$\Psi (f) = \Phi (f)$.
\end{thm}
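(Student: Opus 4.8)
The plan is to realize $f(K^2)$ as a \emph{pseudo-spine} $P\subset\Sigma^3$ whose complement is a single open $3$-cell, and then to obtain the collapsible source $K^2$ by a folding-resolution of $P$, pushing all the folds into the mortal singularities of $f$. First I would fix a handle decomposition of $\Sigma^3$ with a single $0$-handle and a single $3$-handle; since $\Sigma^3$ is a homotopy sphere, $\pi_1=1$ and $\chi=0$ force the numbers of $1$- and $2$-handles to agree, say $g$ each. Deleting the $3$-handle produces a compact contractible $M^3=\Sigma^3\setminus\mathring B^3$ with $\partial M^3=S^2$, and $M^3$ collapses onto a $2$-dimensional spine $P=\Gamma\cup\bigcup_{j=1}^{g}D_j$, which is exactly the presentation $2$-complex of the balanced presentation $\langle x_1,\dots,x_g\mid w_1,\dots,w_g\rangle$ of the trivial group $\pi_1\Sigma^3$: here $\Gamma$ is the spine (a wedge of $g$ circles) of the genus-$g$ handlebody $h^0\cup\bigcup h^1_i$ and each $2$-cell $D_j$ is glued along the attaching word $w_j$ of the $j$-th $2$-handle. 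Because $M^3\searrow P$, the complement $\Sigma^3\setminus P\cong(S^2\times[0,1))\cup_{S^2}B^3$ is one open $3$-cell, which secures the complement condition, and $P$ is contractible.

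The delicate point is that $P$ itself is in general \emph{not} collapsible: deciding whether a contractible presentation complex of the trivial group collapses is precisely an Andrews--Curtis type question. This is exactly why I would not take $K^2=P$, but instead build an abstract collapsible $K^2$ together with a non-degenerate folding map $f\colon K^2\to P\hookrightarrow\Sigma^3$. The idea is to attach, for each relator $w_j$, a $2$-cell $D_j'$ along a \emph{simple} loop in an auxiliary tree-like $1$-skeleton, rather than along the genuine word $w_j$, which traverses the edges of $\Gamma$ with multiplicity. The map $f$ then folds these edges together so as to realize the actual word $w_j$ in $\Gamma$, and likewise folds the several copies of each $\Gamma$-edge back onto $\Gamma$. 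Since each $2$-cell is now attached along an embedded circle to a tree, $K^2$ admits a collapsing sequence starting from free boundary faces of the $D_j'$, so $K^2$ is collapsible (in particular \textsc{gsc}); all the identifications responsible for the potential non-collapsibility of $P$ have been displaced into the folds of $f$. A small generic perturbation arranges that every self-contact of $f$ is a double line of two sheets, so the singularities of $f$ are of the undrawable type (1--10).

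It then remains to check that $f$ is a genuine \textsc{representation}, i.e.\ that $\Psi(f)=\Phi(f)$. Since $P$ is embedded in $\Sigma^3$, the fibre identification $\Phi(f)=\{(x_1,x_2):f(x_1)=f(x_2)\}$ is exactly the equivalence relation that glues $K^2$ back onto $P$. By construction the folds were introduced precisely as the elementary identifications needed to pass from $K^2$ to $P$, so a zipping of $f$ in the sense of Lemma \ref{lemma3} kills all mortal singularities and terminates with the embedding $P\hookrightarrow\Sigma^3$; by the minimality and uniqueness in Lemma \ref{lemma1} this forces $\Psi(f)=\Phi(f)$. Together with the collapsibility of $K^2$ and the undrawable singularities, all clauses of the statement hold.

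The main obstacle, and the real content of the theorem, is the second step: producing a \emph{collapsible} resolution $K^2$ of the possibly non-collapsible spine $P$ while keeping $f$ non-degenerate, its singularities undrawable, and $\Psi(f)=\Phi(f)$ all at once. In effect one trades the Andrews--Curtis obstruction to collapsing $P$ for a controlled family of mortal singularities of $f$; making the folding bookkeeping globally consistent with a single collapse of $K^2$, and verifying that the resulting $\Phi(f)$ is genuinely recovered by a zipping, is where the careful combinatorial work lies. The simple connectivity of $\Sigma^3$, hence the contractibility of $P$, is what guarantees that such a collapsible unfolding exists.
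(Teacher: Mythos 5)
Your reduction to a spine is fine as far as it goes: deleting a $3$-handle, collapsing $M^3=\Sigma^3\setminus\mathring{B}^3$ onto a presentation complex $P=\Gamma\cup\bigcup_j D_j$, and noting that $\Sigma^3\setminus P$ is a single open $3$-cell are all correct. The fatal problem is the next step, where you claim that the folding map $f\colon K^2\to P\hookrightarrow\Sigma^3$, obtained by attaching each $D_j'$ along an embedded circle and folding only the $1$-skeleton, satisfies $\Psi(f)=\Phi(f)$ ``by construction''. It does not; it fails as badly as possible. With the relators cyclically reduced (which you may as well assume), your $f$ is a local embedding: at an interior point of an edge of $c_j$ the neighborhood in $K^2$ is an edge plus one half-disk, mapping injectively onto an edge of $\Gamma$ plus one page of the book $P$; at a vertex of $c_j$, reducedness of $w_j$ again gives local injectivity. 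So ${\rm{Sing}}(f)$ is empty (or at worst a few points where your auxiliary tree meets the circles), whereas $\Phi(f)$ identifies all the distinct edges of the $c_i$'s lying over a common edge of $\Gamma$. Since $\Psi(f)$ is generated by foldings emanating from mortal singularities --- equivalently, every pair $(x,y)\in M^2(f)$ must be joined to ${\rm{Sing}}(f)$ by a zipping path inside $\hat M^2(f)$ --- and since the double-point components of your map (intervals lying over edges of $\Gamma$, with endpoints over the wedge point) contain no singularities at all, none of these identifications lies in $\Psi(f)$; hence $\Psi(f)$ is essentially the diagonal while $\Phi(f)$ is large. You are also misreading Lemma \ref{lemma1}: its uniqueness clause says $\Psi(f)$ is the smallest relation inside $\Phi(f)$ killing the singularities, not that any relation whose quotient embeds must coincide with $\Psi(f)$; the equality $\Psi(f)=\Phi(f)$ is precisely the extra, hard condition, and an immersion with nontrivial $\Phi$ is the standard counterexample to it.

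This is a structural, not a bookkeeping, issue. For $\Psi(f)=\Phi(f)$ to hold, every double line of $f$ must terminate at an undrawable singularity, as in the local model (1 -- 10), where two sheets are glued along a half-line upstairs but cross along a full line downstairs. Consequently $f(K^2)$ cannot be an embedded spine with the non-injectivity of $f$ concentrated over the $1$-skeleton: the image must be a genuine pseudo-spine containing the images of the undrawable singularities, and the singular set must be rich enough that the zipping sweeps out the entire double-point set. Nor can undrawable singularities be produced by a ``small generic perturbation'': they are non-generic, non-immersive points that must be built in deliberately, and perturbing $f$ would in any case destroy your control of the complement of the image. Producing, for an arbitrary homotopy $3$-sphere, a collapsible $K^2$ together with a map whose singular and double-point pattern is globally connected in this sense is the actual content of \cite{Po4-Top1}, where it is achieved by a considerably more elaborate handle-by-handle construction; it is not a formal consequence of the existence of a spine plus an unfolding. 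Note also that the present survey only states Theorem \ref{TOP_repr1} and refers to \cite{Po4-Top1} for the proof, so there is no short argument in the paper that your sketch could be matching.
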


Afterwards, trying to adapt this kind of result to open 3-manifolds,
Po\'enaru and Tanasi   \cite{PT4-AMS} gave an extension of these
ideas to the case of  simply-connected open $3$-manifolds $V^3$.
The main point here is that, at the source of the \textsc{representation}, the
set of double points of $f$ is, generally speaking, no longer
closed (unlike as it was in Theorem \ref{TOP_repr1}). One should have in mind that, whenever this set is \textbf{closed}, then things get
very much easier. For instance, in such a special case, the open manifold $V^3$ turns out to be even
simply connected at infinity (more precisely \textsc{qsf}, and
hence \textsc{wgsc} and \textsc{sci} \cite{Po-contMath, PT4-AMS}).

On the other hand, when one extends to the classical Whitehead 3-manifold $Wh^3$
the Collapsible Pseudo-Spine Representation Theorem (Theorem \ref{TOP_repr1}), new features occur. Actually, in \cite{PT-Wh}, Po\'enaru and Tanasi
studied the  most natural
2-dimensional inverse-\textsc{representations} of $Wh^3$, $g: X^2
{\longrightarrow } Wh^3$,
and they found that, not only the set of double points $M_2(g)$ is
\textbf{not} a closed subset, but its accumulation pattern (with accumulation on a Cantor set) is
chaotic. Very explicitly, the pattern in question is guided by a
specific class of \textit{Julia sets} generated by the infinite iteration
of real quadratic polynomials. More about this in the last section.

\subsection{Universal covers}
We present now an easy, standard procedure for getting \textsc{representations} $f: X^3 \to \widetilde {M^3(G)}$ and, at the same time, we will develop a ``\textbf{naive theory of the universal covering spaces}'', following \cite{Po4-Top1} (see also \cite{Po-Ta_Geom-Ded}).

So, we are ready now for constructing a \textsc{representation} $$ f: X^3 \longrightarrow \widetilde {M^3(G)}. \e{(1-15)}$$
 Pick up among the $q$ singular handlebodies  $\Delta _i ^3$ from the end of Section 2.7, a $\Delta_{i_1} ^3$. We can certainly find then a $\Delta_{i_2} ^3$ coming with a common face $g^3(1) =  \Delta_{i_1} ^3 \cap \Delta_{i_2} ^3$. This process continues until we have managed to put up an \textit{arborescent} union $\Delta ^3$ of all the $\Delta_{i} ^3$'s, this will be a \textbf{fundamental domain} for our $\widetilde M^3(G)$.
 Let $\Sigma = (\overline{g_1}, \overline{g_2}, \cdots, \overline{g_{2N}} )$ be the set of those faces \textsc{not} used for constructing $\Delta ^3$ and which are now free faces of $\Delta^3$. Careful here, the $\overline{g_i}$'s are themselves 3-dimensional handlebodies, like $\Delta ^3$, and we will do not write things like ``$\overline{g_i} \subset \partial \Delta ^3_i$'', which do not make sense.
 Since each $g^3 = \overline{g_i}$ is common exactly to two $\Delta^3 _k$'s, we have an obvious \textbf{fixed point free involution} $j: \Sigma \to \Sigma$. This $j$ is such that $M^3(G)$ is gotten from $\Delta ^3$ by identifying each $\overline{g_i}$ to $j \overline{g_i}$. One should also notice here that $${\rm{Sing}} \big(M^3 (G)\big) \subset \Delta^3 - \bigcup _{i=1}^{2N} \overline{g_i}.$$
  Let $\overline{G}$ be the free monoid generated by the abstract symbols $1, \overline{g_1}, \overline{g_2}, \cdots, \overline{g_{2N}}$. If we add the relations $ \overline{g_i} \cdot j \overline{g_i} = j \overline{g_i} \cdot  \overline{g_i} =1$, then $\overline G$ becomes a free group with $j \overline{g} =  {\overline{g}}^{-1}$. This also comes with an obvious surjective homomorphism $$\chi:  \overline G \rightarrow G .\e{(1-15-1)}$$
   We are ready now to build a representation space $X^3$. This will be $$ X^3 = \{ \mbox{The quotient space of the arborescent union } \underset{x \in \overline G}{\Sigma} x \Delta ^3\}, \e{(1-16)}$$
 where, in a Cayley graph manner, for each $x \in \overline G$ and each $ \overline{g} \in \Sigma$, we glue together
 $x\Delta ^3$ and $x \overline{g} \Delta^3$ along their two faces below
 $$ x\Delta^3 \supset  \overline{g} \leftrightarrow j  \overline{g} \subset x   \overline{g} \Delta^3.$$

 What our (1 -- 16) defines, is a non locally finite 3-dimensional singular handlebody which we will endow with the weak topology. This $X^3$, which being an arborescent union of \textsc{gsc} pieces is automatically \textsc{gsc} itself, also comes endowed with a tautological map which is non-degenerate ($\lambda$-handles go isomorphically to $\lambda$-handles) $$F: X^3 \longrightarrow M^3(G).\e{(1-17)}$$
 Notice also that the map $F$ ``unrolls'' the infinitely many fundamental domains of $X^3$ onto the unique fundamental domain of which $M^3(G)$  is the quotient, in a way which is reminiscent of the ``developing map'' of Sullivan and Thurston \cite{Su-Thu}.
 \begin{lem}[The naive, Kindergarten theory of universal covering spaces]\label{lemma6}
  When, like in (1 -- 2) we go to the commutative diagram
$$\begin{diagram}
X^3 &&\rTo^F(4,2)&&M^3(G) \\
    &\rdTo& &\ruTo_{F_1}\\& & X^3 / \Psi (F) , \end{diagram} $$
then the map $F_1 : X^3 / \Psi (F) \to M^3(G)$ \textbf{is} the universal covering space
  $\pi: \widetilde {M^3(G)} \to M^3(G)$.
 \end{lem}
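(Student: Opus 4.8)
The plan is to compare the quotient $X^3/\Psi(F)$ directly with the genuine universal covering $\pi\colon \widetilde{M^3(G)}\to M^3(G)$, and to recognise $\Psi(F)$ as the equivalence relation cut out by a canonical lift of $F$. The decisive tool will be the minimality clause (ii) of Lemma \ref{lemma1}: \emph{any} equivalence relation contained in $\Phi(F)$ whose induced quotient map is immersive must already equal $\Psi(F)$. So the whole argument reduces to producing one such relation and identifying it as the kernel of a surjection onto $\widetilde{M^3(G)}$.

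First I would lift $F$. Since $X^3$ is an arborescent union of \textsc{gsc} pieces it is itself \textsc{gsc}, hence connected and simply connected; therefore the continuous map $F\colon X^3\to M^3(G)$ admits a lift $q\colon X^3\to \widetilde{M^3(G)}$ with $\pi\circ q=F$, unique once a basepoint is fixed. Next I would check that $q$ is a cell-preserving surjective identification map. On the interior of each fundamental domain both $F$ and $\pi$ are homeomorphisms onto the interior of the single domain downstairs, so $q$ carries the open domain $x\Delta^3$ homeomorphically onto $\chi(x)\Delta^3\subset \widetilde{M^3(G)}$; moreover the face-gluing prescribed in the construction of $X^3$ — the face $\overline g$ of $x\Delta^3$ glued to $j\overline g$ of $x\overline g\,\Delta^3$ — is carried by $q$ to the corresponding gluing in $\widetilde{M^3(G)}$. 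Since $\chi\colon \overline G\to G$ is onto and $\widetilde{M^3(G)}=\bigcup_{\gamma\in G}\gamma\Delta^3$, the map $q$ is surjective; being a simplicial surjection respecting the handle decomposition, it is an identification map. Hence $X^3/\mathcal R_q\cong \widetilde{M^3(G)}$, where $\mathcal R_q=\{(a,b):q(a)=q(b)\}$, and the map it induces down to $M^3(G)$ is exactly $\pi$.

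Now comes the key step. The relation $\mathcal R_q$ satisfies $\mathcal R_q\subset \Phi(F)$, since $q(a)=q(b)$ gives $F(a)=\pi q(a)=\pi q(b)=F(b)$; and the quotient map it induces to $M^3(G)$ is, as just noted, the covering $\pi$ itself. A covering map is a local homeomorphism, so it is immersive and ${\rm Sing}(\pi)=\emptyset$. Thus $\mathcal R_q$ is an equivalence relation contained in $\Phi(F)$ whose induced quotient map has empty singular set, and Lemma \ref{lemma1}(ii) forces $\mathcal R_q=\Psi(F)$. Combined with $X^3/\mathcal R_q\cong \widetilde{M^3(G)}$ this yields $X^3/\Psi(F)\cong \widetilde{M^3(G)}$ with $F_1=\pi$, which is the assertion. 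One can alternatively run the argument in reverse — $X^3/\Psi(F)$ is simply connected by Lemma \ref{lemma2} together with $\pi_1 X^3=0$, and $F_1$ is a covering because it is immersive, cell-preserving, with fibres indexed by $G$, so uniqueness of the universal cover finishes — but the lift-and-apply-Lemma \ref{lemma1}(ii) route is shorter because it avoids checking the evenly-covered condition by hand.

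The part needing the most care is the bookkeeping behind $X^3/\mathcal R_q\cong \widetilde{M^3(G)}$: one must verify that $q$ really is an identification map realising the weak-topology quotient, that the matching $q(x\Delta^3)=\chi(x)\Delta^3$ is compatible with the gluings across the immortal (undrawable) singularities, and that $\pi$ genuinely sits inside the non-degenerate simplicial $\Phi/\Psi$ framework, so that ${\rm Sing}(\pi)=\emptyset$ is meaningful and Lemma \ref{lemma1}(ii) applies. Once this handle-by-handle correspondence is pinned down compatibly with the Cayley-graph gluing that defines $X^3$, the remainder is formal.
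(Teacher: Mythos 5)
Your proof is correct, but it takes a genuinely different route from the paper's. The paper's own proof is a two-liner running in the opposite direction: $F_1$ is immersive by Lemma \ref{lemma1}(i) and (the paper asserts) ``clearly has the path lifting property'', hence is a covering projection, while $\pi_1\bigl(X^3/\Psi(F)\bigr)=0$ follows from Lemma \ref{lemma2} together with $\pi_1X^3=0$ (which holds since $X^3$ is \textsc{gsc}); uniqueness of simply connected coverings then identifies $F_1$ with $\pi$. That is exactly the ``reverse'' argument you sketch parenthetically and set aside. Your primary route instead takes $\pi$ as given, lifts $F$ to the canonical $q\colon X^3\to\widetilde{M^3(G)}$, checks that $q$ induces a simplicial isomorphism $X^3/\mathcal R_q\cong\widetilde{M^3(G)}$, and then invokes the uniqueness clause (ii) of Lemma \ref{lemma1}: since $\mathcal R_q\subset\Phi(F)$ and the induced map down to $M^3(G)$ is the covering $\pi$, which is a local homeomorphism and so singularity-free, one is forced to have $\mathcal R_q=\Psi(F)$. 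What this buys: you never verify path lifting or the evenly-covered condition for $F_1$ (the step the paper dismisses with ``clearly''), and you do not need Lemma \ref{lemma2} at all; the cost is the quotient bookkeeping you rightly flag, which does go through in the weak-topology simplicial category (a non-degenerate simplicial surjection induces a simplicial bijection on its $\Phi$-quotient, and a simplicial bijection between complexes with the weak topology is a homeomorphism). It is also worth noting that your lift $q$ is precisely the map $f$ the paper constructs immediately after Lemma \ref{lemma6} in diagram (1--18): there the paper uses Lemma \ref{lemma6} to get $X^3/\Psi(f)=\widetilde{M^3(G)}$ and hence $\Phi(f)=\Psi(f)$, whereas you run that same construction forward to prove Lemma \ref{lemma6} itself, so your argument effectively establishes the lemma and those subsequent facts in one pass.
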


 \begin{proof}
 This is immediate: $F_1$ clearly has the path lifting property and $\pi _1 (X^3 / \Psi (F))=0$, because of Lemma \ref{lemma2}. \end{proof}

 We call the present theory naive and ``Kindergarten'', with the idea that a child, with enough cubes available could play around
 with them, gluing then together and spreading them out (in higher dimension) and discover this way the universal covering space.

 Consider now the natural tessellation
 $$\widetilde {M^3(G)} = \sum _{g\in G} g \Delta ^3.$$
  Any initial lift of $1\cdot \Delta^3 \subset X^3$ to $\widetilde {M^3(G)}$ comes automatically with a canonical lift $f$ of the whole $X^3$ to $\widetilde {M^3(G)}$ and this map $f$, occurring in the commutative diagram
 $$\begin{diagram}
X^3 &&\rTo^f(4,2)&&\widetilde {M^3(G)}\\
    &\rdTo_F& &\ldTo_{\pi}\\& & M^3 (G) ,\end{diagram}\e{(1-18)}$$
  is non-degenerate. In the context of (1 -- 18) one has the following equality between equivalence relations
  $$\Psi (f) = \Psi (F) \subset X ^3\times X^3.$$
   This follows from the fact that $\pi$ is a covering projection and that the singularities of the maps $f$ and $F$ have to be killed by the same folding maps. But then, also, clearly $X^3 / \Psi(f)= \widetilde {M^3(G)}$ by the Kindergarten Lemma \ref{lemma6}. With this, we can find a cross-section for the canonical map $ f_1(\Psi(f)) : X^3 / \Psi(f) \to \widetilde{M^3(G)}$, with $ f_1(\Psi(f))$ like in (1 -- 2), namely $f_1(\Psi(f)) : \widetilde {M^3(G)} =
   X^3 / \Psi(F)= X^3 / \Psi(f) \to \widetilde {M^3(G)}$, where the first equality comes from Lemma \ref{lemma6}, and hence $\Psi(f) = \Phi(f)$.

   With these things we have the:
   \begin{lem}\label{lemma7}
The non-degenerate map among singular handlebodies, constructed above
$$f: X^3 \longrightarrow \widetilde {M^3(G)},\e{(1-18-1)}$$
is a \textsc{representation} of the group $G$.
\end{lem}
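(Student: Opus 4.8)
The plan is to verify, one by one, the three defining conditions (a), (b), (c) of Definition \ref{repr} for the non-degenerate map $f$ of (1 -- 18 -- 1), most of the needed ingredients having already been assembled in the construction above. Condition (a) is immediate: the space $X^3$ produced in (1 -- 16) is a (non locally finite) $3$-dimensional singular handlebody, and since it is an arborescent union of the \textsc{gsc} pieces $x\Delta^3$ it is automatically \textsc{gsc}, so it is an admissible source with $p=3$; moreover the tautological lift $f$ is non-degenerate because, by construction, it carries $\lambda$-handles isomorphically onto $\lambda$-handles. Thus (a) holds and, being non-degenerate into a $3$-complex, $f$ has $p\le 3$, here exactly $p=3$.

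For condition (b) I would simply invoke the chain of identities established just before the statement. By the Kindergarten Lemma \ref{lemma6} one has $X^3/\Psi(F)=\widetilde{M^3(G)}$; since $\pi$ is a covering projection, the mortal singularities of $f$ and of $F$ must be killed by the very same folding maps, so $\Psi(f)=\Psi(F)$ and therefore $X^3/\Psi(f)=\widetilde{M^3(G)}$ as well. The canonical map $f_1(\Psi(f))\colon X^3/\Psi(f)\to\widetilde{M^3(G)}$ of diagram (1 -- 2) then admits a cross-section and is a bijection, which forces $\Psi(f)=\Phi(f)$. This is exactly condition (b), so nothing new is required here.

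The one point that still calls for an independent argument is condition (c), essential surjectivity, which for $p=3$ reads $\overline{f(X^3)}=\widetilde{M^3(G)}$. Here I would compare the two tessellations, $\widetilde{M^3(G)}=\sum_{g\in G} g\Delta^3$ on the target and $X^3=\bigcup_{x\in\overline{G}} x\Delta^3$ on the source, and observe that $f$ carries the fundamental domain $x\Delta^3$ onto the tile $\chi(x)\Delta^3$, where $\chi\colon\overline{G}\to G$ is the surjective homomorphism (1 -- 15 -- 1). Because $\chi$ is onto, every tile $g\Delta^3$ of $\widetilde{M^3(G)}$ already lies in $f(X^3)$, so in fact $f(X^3)=\widetilde{M^3(G)}$ --- full surjectivity, which a fortiori yields the required essential surjectivity. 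The main (and essentially only) obstacle is therefore bookkeeping rather than depth: one must check that the lift $f$ respects the $\overline{G}$- and $G$-indexed tilings tile-by-tile through $\chi$, and confirm that the identification $X^3/\Psi(f)=\widetilde{M^3(G)}$ furnished by Lemma \ref{lemma6} really is the universal cover and not some intermediate cover. Once this compatibility is pinned down, the three conditions all hold and the lemma follows.
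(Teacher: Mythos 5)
Your proposal is correct and takes essentially the same route as the paper: conditions (a) and (b) are exactly the facts established in the construction and in the $\Psi(f)=\Psi(F)$ / cross-section argument immediately preceding the lemma, while your tile-by-tile surjectivity through the surjection $\chi\colon\overline{G}\to G$ is precisely the content of the paper's commutative diagram (1 -- 19) relating the $\overline{G}$-action on $X^3$ to the $G$-action on $\widetilde{M^3(G)}$. Nothing further is needed.
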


\begin{proof}
In order  to get some intuitive feeling for this result we suggest to the reader playing with the toy-model where $M^3(G)$ is replaced by the 2-torus $S^1 \times S^1$. The representation space (playing the role of $X^3$) is then an infinitely ramified union of Riemannan surfaces of $\log z$, each of them containing a non locally finite infinite spiral, put together along an infinitely ramified arborescent structure.

There is a natural left-action $\overline G \times X^3 \to X^3$ which, via (1--15--1),  connects to the natural action $G \times \widetilde {M^3(G)}\to \widetilde {M^3(G)}$. We have a commutative diagram, for any $\overline g \in \overline G$,
$$\begin{diagram}
  X^3& \rTo ^{\overline g \in \overline G} & X^3\\
\dTo_f &&  \dTo_f\\
\widetilde {M^3(G)}& \rTo ^{\ \chi (\overline g) \in G}& \widetilde{M^3(G)}.
\end{diagram} \e (1-19)$$
\end{proof}
\begin{rem}
The very elementary and simple-minded (1 -- 19) should not be mixed with the much more sophisticated, and much harder to achieve, \textbf{equivariant} \textsc{representations}, where we have an action $G \times X^3 \to X^3$, compatible with the action of $G$ on $\widetilde {M^3(G)}$, and not just an action of $\overline G$ on $X^3$. \end{rem}

Our $X^3$ is an infinite union of continuous paths of fundamental domains $\overline g \Delta ^3$, all starting at $1\cdot \Delta ^3$. For any $M < \infty$ the restriction to the paths of length $\leq M$ is denoted by $X^3 | M \subset X^3$. For any $\overline g \in \overline G$, the $\overline g (X^3 | M) \subset X^3$ is an isomorphic copy of $X^3 | M$ starting at $\overline g \Delta ^3$.

We introduce the notation $$ \Phi (M) = \Phi \big (f |_{ (X^3 | M)}\big ), \ \ \Psi (M) = \Psi \big (f |_{(X^3 | M)}\big ).$$
 The fact that $\Phi(f)= \Psi (f)$ has the following consequence, very much like the ``compactness argument'' mentioned during the proof of Lemma \ref{lemma5}.

 \begin{lem}\label{lemma8}
 Given our \textsc{representation} (1--18--1) there is a function $\overline M : \mathbb Z^+ \to \mathbb Z^+$, with $\overline M >>M$, analogous to the function $\mathcal N: \mathbb Z^+ \to \mathbb Z^+$ occurring in the completely different, non group-theoretical context of Lemma \ref{lemma5}, which is such that
 $$ \mbox{for all $M$, with $\overline M = \overline M (M)$, we find: } \Psi (\overline M) | (X^3 |M) = \Phi (M) | (X^3 |M).\e{(1-20)}$$
 \end{lem}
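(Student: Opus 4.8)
The plan is to derive Lemma \ref{lemma8} as a ``compactness argument'' entirely parallel to the one used in the proof of Lemma \ref{lemma5}, leveraging the global equality $\Phi(f) = \Psi(f)$ established in the construction of the \textsc{representation} (1 -- 18 -- 1). The essential mechanism is the same as before: over the whole space $X^3$ the two equivalence relations coincide, so whenever a double-point identification is forced by $\Phi$ it must already be accounted for by $\Psi$, i.e. realized by finitely many folding maps of the zipping. The only thing to do is to track how this globally-available information localizes onto the finite sub-objects $X^3|M$, and to extract a bound $\overline{M}(M)$ telling us how far out we must zip in order to catch, on $X^3|M$, all the identifications that $\Phi(M)$ demands.

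First I would record the monotonicity and restriction relations analogous to those in Lemma \ref{lemma5}. Since $X^3|M \subset X^3$ is a codimension-zero subcomplex and $f|_{X^3|M}$ is simply the restriction of $f$, for the $\Phi$-relations we have the exact equality $\Phi(M) = \Phi(f)|_{(X^3|M)}$, because $\Phi$ has ``no topological memory'' and is computed pointwise from $f$. For the $\Psi$-relations, on the other hand, only the one-sided inclusion $\Psi(M) \subset \Psi(f)|_{(X^3|M)}$ holds in general, exactly as in the passage $\Psi_i \subset \Psi_\infty|_{Z^n_i}$ in the proof of Lemma \ref{lemma5}; the reason is that zipping performed inside $X^3|M$ alone cannot use folding maps that stray outside the truncation, whereas the global $\Psi(f)$ is free to do so. Combining these with the hypothesis $\Phi(f) = \Psi(f)$ gives the chain $\Phi(M) = \Phi(f)|_{(X^3|M)} = \Psi(f)|_{(X^3|M)}$, so the content of the lemma is that the global relation $\Psi(f)$, when restricted to $X^3|M$, is already attained after zipping out only to some finite radius $\overline{M}$.

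Next I would run the compactness argument proper. Fix $M$. Any individual identification $(x_1,x_2)\in \Phi(M)=\Psi(f)|_{(X^3|M)}$ is witnessed, because $\Psi(f)=\rho_\omega$ is the union of an increasing sequence of folding maps (Lemma \ref{lemma3}, with $\omega_1 = \omega$), by a \emph{finite} subsequence of foldings, each of which touches only finitely many fundamental domains $\overline{g}\Delta^3$; hence there is a finite radius $M'$ with $(x_1,x_2) \in \Psi(M')|_{(X^3|M)}$. Because $X^3|M$ meets only finitely many simplices and $f$ is non-degenerate, there are only finitely many such identifications to account for, so one may take the maximum of the corresponding radii $M'$ and set $\overline{M}(M)$ to be (a strict upper bound of) that maximum, giving $\overline{M} \gg M$ and
$$\Phi(M)|_{(X^3|M)} \subset \Psi(\overline{M})|_{(X^3|M)}.$$
The reverse inclusion $\Psi(\overline{M})|_{(X^3|M)} \subset \Phi(M)|_{(X^3|M)}$ is automatic from $\Psi(\overline{M}) \subset \Phi(f)$ together with $\Phi(f)|_{(X^3|M)} = \Phi(M)$, which yields the desired equality (1 -- 20).

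The main obstacle, and the only point requiring genuine care rather than bookkeeping, is the finiteness step that converts the global equality $\Phi(f)=\Psi(f)$ into a uniform finite radius $\overline{M}$. One must be sure that the zipping of $f$ can be organized so that each folding map has \emph{bounded support} relative to the fundamental-domain structure, so that ``finitely many foldings'' really does translate into ``finitely many fundamental domains,'' hence into a finite $\overline{M}$; this is where the arborescent, locally-finite-in-each-domain structure of $X^3$ from (1 -- 16) is used, and where one invokes the compactness of $X^3|M$ (each truncation is a finite union of fundamental domains, each of which is a compact \textsc{gsc} singular handlebody). I would lean on the cited detailed treatment in \cite{Po2-duke, Po_QSF3} for the precise estimates controlling the support of individual foldings, exactly as the proof of Lemma \ref{lemma5} defers its own ``easy compactness argument'' to \cite{Po2-duke}.
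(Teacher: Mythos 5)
Your proposal is correct and takes essentially the same approach as the paper: the paper gives no detailed proof of Lemma \ref{lemma8}, merely observing that it follows from the equality $\Phi(f)=\Psi(f)$ via the same ``compactness argument'' invoked in the proof of Lemma \ref{lemma5} (and deferred there to \cite{Po2-duke}). Your write-up is a faithful elaboration of exactly that argument --- the restriction relations, the finite-support-of-foldings step, and the reduction to finitely many simplex identifications on the compact truncation $X^3|M$.
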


 This finishes what we have to say here concerning the highly pathological \textsc{representation} of $G$ produced by Lemma \ref{lemma7}, which we will hardly ever   be used, as such.

\subsection{Easy REPRESENTATIONS}

 All the material presented above was the preliminary easiest  part of the theory of \textsc{representations}. But before we will go into more difficult  results, we would like to show that even this easy part has already some interesting consequences.

 Anyway, by now we have established that, for any finitely presented group $G$, 3-dimensional \textsc{representations}
 do exist, but the one given by (1--18--1) and/or Lemma \ref{lemma7} is not very useful. And then, from 3-dimensional \textsc{representations} for $G$, 2-dimensional \textsc{representations} can be gotten too, by first finely subdividing and then going to an appropriate generic perturbation of the map $f | \{ \mbox{2-skeleton of the subdivided } X^3\}$ (we will give more details at the end of this section). There is an inverse (more difficult) road too, from 2-dimensional  to 3-dimensional \textsc{representations}, but we do not need to go into that right now.

\begin{defi}\label{easy}\
\begin{itemize}
\item[(I)] A 3-dimensional \textsc{representation} $$f: X^3 =  \sum _{\overline g\in \overline G} \overline g \Delta ^3  \longrightarrow \sum _{g\in G} g \Delta ^3 = \widetilde {M^3(G)}$$  is called {\bf{easy}} if there is a function $\mu : \mathbb Z^+ \to \mathbb Z^+ $ such that for each $g \Delta ^3 \subset \widetilde {M^3(G)}$ there are at most $\mu (\| g \|)$ fundamental domains $\overline g \Delta ^3$ such that $f(\overline g \Delta ^3) \cap g \Delta ^3 \neq \emptyset$. This makes the map $f$ \textsc{proper}. A given $g \Delta ^3$ is touched only by \textbf{finitely} many $\overline g \Delta ^3$'s.

\item[(II)] A 2-dimensional \textsc{representation} $f: X^2  \longrightarrow \widetilde {M^3(G)} $  is called {\bf{easy}} if  the subsets $M^2(f) \subset X \times X$ (and hence $M_2(f) \subset X^2$ too) and $f(X^2) \subset \widetilde {M^3(G)}$ are both \textbf{closed} subsets of the respective targets.
\end{itemize}
\end{defi}

  The reader should be careful, we use the word ``easy'', in this paper, both in the usual mundane sense and also in the technical sense of Definition \ref{easy} and of the next definition, the ``easy groups''. The two meanings are quite distinct and the context should always say which of the two meanings we actually use.

\begin{defi} \label{easy-group} A finitely presented group $G$ is called {\bf{easy}} if it admits easy \textsc{representations}.
 \end{defi}

  The interest of the concept easy is that any easy group $G$ is actually \textsc{qsf} \cite{Ot-Po1} (as we shall see in the sequel of this paper by the second author), and this also means that if by any chance $G = \pi _1 M^3$, then $\pi_1 ^{\infty} \widetilde M^3=0$ \cite{Ot-Ru-Ta2}. We will come back to these issues later.

  \begin{rem} A \textsc{representation} $f: X^p \to Y^3$ which is \textsc{not} easy exhibits
  the following phenomenon, which we call the \textbf{\textit{Whitehead nightmare}}, a terminology first introduced in \cite{Po-contMath}. And here is, precisely, what the Whitehead nightmare means, when it is present, for the \textsc{representation} above. Any compact $K \subset Y^3$ is hit \textbf{infinitely} often by the map $f$. This kind of thing is clearly there for the  Whitehead manifold $Wh^3$, hence the name.  But then, the Whitehead nightmare is also displayed by the \textit{Casson handles} and by the \textit{gropes} of \v{S}tan'ko, Edwards, Cannon, Freedmann and Quinn \cite{Free, Fr-Qu, Gu-Ma}.
  \end{rem}

  We end this section with the following useful fact (see \cite{Po_QSF1_Geom-Ded}):

  \begin{lem} The existence of an easy 3-dimensional \textsc{representation} implies the existence of an easy 2-dimensional \textsc{representation}.
  \end{lem}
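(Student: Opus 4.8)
The plan is to show that the \emph{easiness} hypothesis on the given $3$-dimensional \textsc{representation} survives, through the standard passage to $2$ dimensions, as the two closedness requirements of Definition \ref{easy}(II). So I start from an easy $3$-dimensional \textsc{representation} $f\colon X^3=\sum_{\overline g\in\overline G}\overline g\,\Delta^3\to\widetilde{M^3(G)}$. By Definition \ref{easy}(I) the map $f$ is \textsc{proper}: each fundamental domain $g\,\Delta^3$ is met by the images of at most $\mu(\|g\|)$ of the $\overline g\,\Delta^3$, so over any compact set only finitely many fundamental domains contribute. I would then build a $2$-dimensional \textsc{representation} $g\colon X^2\to\widetilde{M^3(G)}$ by the recipe already announced, namely finely subdividing $X^3$, passing to its $2$-skeleton $X^2$, and replacing $f|_{X^2}$ by a generic non-degenerate perturbation. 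That this $g$ is a $2$-dimensional \textsc{representation} in the sense of Definition \ref{repr} is exactly the content of that general construction, so the only thing left to establish is that $g$ can be taken \emph{easy}; the strategy is to show that \textsc{properness} is preserved and then to read off the two closedness statements from it.

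First I would check that \textsc{properness} transfers. Subdivision and restriction to the $2$-skeleton keep the relevant cells of each $\overline g\,\Delta^3$ inside that fundamental domain, and the perturbation is carried out fundamental-domain by fundamental-domain and kept $C^0$-small, so the image of each $\overline g\,\Delta^3$ remains in a small neighbourhood of $f(\overline g\,\Delta^3)$. Consequently the finite-touching bound $\mu$ persists verbatim: any compact $C\subset\widetilde{M^3(G)}$ is met by the images of only finitely many fundamental domains, so $g^{-1}(C)$ sits inside a finite union of compact $2$-skeleta and is compact. Thus $g$ is \textsc{proper}, and over each compact $C$ there are only finitely many image sheets $g\big((\overline g\,\Delta^3)^{(2)}\big)$, each of them embedded because $f$ carries $\lambda$-handles isomorphically onto $\lambda$-handles and a small generic perturbation keeps the $2$-skeleton of a single fundamental domain embedded.

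The closedness of $g(X^2)$ is then immediate, since a \textsc{proper} continuous map into the locally compact Hausdorff complex $\widetilde{M^3(G)}$ is a closed map. For the double-point sets I would argue locally: given a compact neighbourhood $C$ of a point of $\widetilde{M^3(G)}$, only finitely many embedded sheets meet $C$, and after the generic perturbation these finitely many sheets are in general position, so their mutual double loci are closed in $C$ (at the controlled undrawable singularities no double points arise at all, because there $g$ embeds). Hence $M_2(g)\cap C$ is a finite union of closed sets, so $M_2(g)$, and likewise $M^2(g)\subset X^2\times X^2$, is closed; therefore $g$ is an easy $2$-dimensional \textsc{representation}.

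The main obstacle, as I see it, is precisely to perform the passage from $3$ to $2$ dimensions so that \textsc{properness} and per-sheet embeddedness are \emph{simultaneously} preserved while the finite family of sheets over each compactum is put into honest general position. This is the step where the hypothesis ``easy'' is genuinely used: it is the finiteness of the number of sheets over each compact set that prevents any double locus from accumulating at finite distance. That non-accumulation is exactly the negation of the Whitehead nightmare, and once the finite-sheet picture is secured the two closedness conditions follow formally.
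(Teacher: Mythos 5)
Your construction is exactly the one this survey has in mind: the paper itself gives no proof of this lemma (it is quoted as a ``useful fact'' from \cite{Po_QSF1_Geom-Ded}), and the only guidance it offers is the recipe stated earlier in the same section --- finely subdivide $X^3$, restrict to the $2$-skeleton, and take a generic non-degenerate perturbation of $f$ restricted there --- which is precisely what you do. Your main mechanism is also the intended one: the finite-touching function $\mu$ of Definition \ref{easy}(I) survives a $C^0$-small, per-fundamental-domain perturbation, this gives \textsc{proper}ness of the $2$-dimensional map $g$, hence closedness of $g(X^2)$, and it reduces the double-point analysis over any compactum to finitely many compact sheets in general position. Two caveats on execution: the conditions you defer to ``the general construction'' (that the subdivided $2$-skeleton is \textsc{gsc} and that $\Psi(g)=\Phi(g)$ for the perturbed map) are themselves nontrivial and are part of what \cite{Po_QSF1_Geom-Ded} verifies; and embeddedness of each perturbed sheet is not automatic in codimension one (the $2$-skeleton has no normal collar along its edges), though for finitely many compact sheets over each compactum it can be arranged.

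The genuine error is your parenthetical claim that ``at the controlled undrawable singularities no double points arise at all, because there $g$ embeds''. That is the paper's convention for \emph{immortal} singularities of $X^2$ only. Your $g$ must also have \emph{mortal} singularities: except in trivial cases $g$ is not injective, and since $\Psi(g)=\Phi(g)$, Lemma \ref{lemma1} forces ${\rm Sing}(g)\neq\emptyset$, with every double point joined to ${\rm Sing}(g)$ by a zipping path. At a mortal singularity $g$ fails, by definition, to be locally injective: in the local model (1 -- 10) a double line emanates from the singular point $s$, the pairs $\big((0,0,z)_{R_1},(0,0,z)_{R_2}\big)$ with $z>0$ converge to $(s,s)$, yet $s$ itself has a singleton preimage. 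Hence $s\in\overline{M_2(g)}\setminus M_2(g)$, and the literal closedness of $M_2(g)$ and of $M^2(g)$ fails exactly at the points your parenthesis waves away. What your finiteness argument really delivers --- and what ``easy'' is meant to assert, in the spirit of the set $\hat M^2(f)=M^2(f)\cup{\rm Sing}(f)$ used for zipping paths --- is that the only accumulation of $M_2(g)$ is on the finitely many singularities lying over each compactum, i.e. that $M_2(g)\cup{\rm Sing}(g)$ is closed, as opposed to the Cantor-set accumulation along transversals exhibited by the Whitehead manifold. The fix is to say this, incorporating the singular endpoints of the double lines, rather than to deny that double points occur near the singularities.
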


\begin{rem}
  The converse is true too, but the proof is much more complicated \cite{Po_QSF3, PT4-AMS}. Anyway, in the context of Definition \ref{easy}, we always have (I) $ \Leftrightarrow$ (II).
\end{rem}

\section{Groups and REPRESENTATIONS}
Although general \textsc{representations} do always exist, easy ones are much harded to get. And one of the main tools for circumventing this difficulty is to put a geometric condition on the group, as originally done in \cite{Po3-JDG}. In what follows, we will provide some concrete example of classes of groups which are easy (in the sense of Definition \ref{easy-group}), and we will see the main features of 2- and 3-dimensional \textsc{representations} of general finitely presented groups.

  \subsection{Almost-convex groups}
  Let now $G$ be a finitely presented group, with a chosen finite system of generators $B = \{ g_1 ^{\pm 1}, g_2 ^{\pm 1}, \ldots, g_l ^{\pm 1}\}$.    It is not hard to show that we can always construct a \textsc{representation} (1 -- 15) (and/or (1--18--1), they are the same object), so that when $\Sigma = \{ \overline g_1 ,  \overline g_2 , \ldots,  \overline g_{2N}  \}$ is changed into a system of generators $\Sigma =  \{ \overline g_1 ^{\pm 1}, \overline g_2 ^{\pm 1}, \ldots, \overline g_N ^{\pm 1}\}$ of $G$, we have $B \subset \Sigma$. [One can adapt the proof of Lemma 3.2 from \cite{Po3-JDG} to the present more singular set-up].

  The system $B$ given, we have the \textit{Cayley graph} $\Gamma (G,B)$ with its word-length norm $\| \cdots \|$ induced by $B$, its geodesic arcs,  its balls of radius $n$, call them $B(n)$, and their boundaries $S(n) = \partial B(n)$.

  Here is now an interesting class of finitely generated groups. Following Jim Cannon \cite{Can}, the Cayley graph $\Gamma (G,B)$ is called \textbf{\textit{$k$-almost convex}} if there exists an $N(k) \in \mathbb Z^+$ such that for any $x,y \in S(n)$ which are such that $d(x,y)\leq k$, we can find a path of length $\leq N(k)$ in $B(n)$ joining them.

  If, for some $B$ and all $k$'s, the Cayley graph $\Gamma (G,B)$ is $k$-almost convex, then one says that $G$ itself is an \textbf{\textit{almost-convex}} group. Note that almost-convexity depends on the chosen set of generators \cite{Thi}. Also, it is easy to show  that almost
convex groups are not just finitely generated but finitely presentable \cite{Can}. All the hyperbolic groups of M. Gromov \cite{CDP, G-dlH, Grom-hyp} are almost-convex; so are also Euclidean groups, small cancellation groups, Coxeter groups, and discrete groups based on seven of the eight three-dimensional geometries; on the other hand, SOL
  groups  are not almost convex, and neither are solvable Baumslag-Solitar groups (see \cite{Can, CFGT, DaSh, HM,  MS, S-S}).

  \begin{rem}
  Concerning the notion of almost-convexity introduced above, notice that, without any condition, for any finitely presented  group $G$ and any of its Cayley graphs $\Gamma (G,B)$, and for any $x,y \in S(n)$, we can always find a path $\lambda \subset B(n)$ joining $x$ to $y$, with length$(\lambda) \leq 2n$. This is a triviality, of course.
  \end{rem}

  Now, there is also a so-called ``\textbf{Po Condition}'', intermediary between almost-convexity and the universal triviality above, namely:
  \begin{defi}\label{Po-condition}
  The finitely presented group $G$ satisfies the {\bf{Po condition}} if there exists a finite system of generators $B$ and two constants $\epsilon >0 , C>0$ such that for any pair $x,y \in S(n)$ with $d(x,y) \leq 3$, one can find a path $\lambda \subset B(n)$ joining $x$ with $y$ with $length \lambda \leq C n^{1-\epsilon}$.
  \end{defi}

  This condition also leads to interesting consequences, and see here \cite{Po3-JDG, PT2-k-weak} and the much more recent \cite{Kap}. Note also that  discrete  cocompact  solvgroups  do  not  fulfill  Po condition \cite{Fu_convex}.

 \begin{prop}\label{AC>easy}
 Let $G$ be a finitely presented group such that some $\Gamma (G,B)$ is 3-almost convex. Then $G$ is an easy group. Hence so are the hyperbolic and the NIL groups.
 \end{prop}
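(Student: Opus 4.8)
The plan is to upgrade the tautological $3$-dimensional \textsc{representation} $f\colon X^3=\sum_{\overline g\in\overline G}\overline g\,\Delta^3\to\widetilde{M^3(G)}=\sum_{g\in G}g\,\Delta^3$ of Lemma \ref{lemma7} into an \emph{easy} one, using $3$-almost convexity to control the multiplicity of the map over each fundamental domain. Recall that the naive $f$ of Lemma \ref{lemma7} already satisfies $\Psi(f)=\Phi(f)$ by construction, but it is hopelessly non-proper: the domains lying over a fixed $g\,\Delta^3$ are indexed by the whole fibre $\chi^{-1}(g)\subset\overline G$, which is infinite. The whole game is therefore to cut $X^3$ down to a sub-object that still essentially surjects onto $\widetilde{M^3(G)}$ and still has $\Psi=\Phi$, while now touching each $g\,\Delta^3$ only boundedly often. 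As arranged at the start of this subsection I work with a generating system having $B\subset\Sigma$, so that word-length in $G$ and the combinatorial adjacency of the tessellation $\widetilde{M^3(G)}$ (two domains share a face $\Leftrightarrow$ their group elements differ by an element of $\Sigma$) are comparable.

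First I would fix a \emph{geodesic combing}: for every $g\in G$ choose a $\Sigma$-geodesic from $1$ to $g$, consistently so that geodesic prefixes are again chosen geodesics. Each such geodesic is a word in $\Sigma$, i.e. a preferred lift $\overline g\in\overline G$, and the preferred lifts form a subtree $\mathcal T$ of the rooted tree $\overline G$. Setting $X^3_0=\bigcup_{\overline g\in\mathcal T}\overline g\,\Delta^3$ gives an arborescent union of \textsc{gsc} pieces, hence a \textsc{gsc} space by the arborescent-union principle noted in Lemma \ref{lemma7}, and by construction it hits every $g\,\Delta^3$ \emph{exactly once}. Thus $X^3_0\to\widetilde{M^3(G)}$ is already proper with multiplicity one; but it is not yet a \textsc{representation}, because the combing tree records only the ``radial'' face-gluings, so $\Psi\big(f|_{X^3_0}\big)$ is strictly smaller than $\Phi\big(f|_{X^3_0}\big)$: the ``horizontal'' face-identifications between neighbours lying on a common sphere $S(n)$ are not realized by any folding.

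This is exactly where $3$-almost convexity enters. A missing horizontal identification concerns a face shared in $\widetilde{M^3(G)}$ by neighbouring domains $g\,\Delta^3,g'\,\Delta^3$; once $B\subset\Sigma$ and the relator-cycles running through that face are taken into account, the group elements involved are pairwise within distance $3$ in $\Gamma(G,B)$ — this is the combinatorial size of the local gluing datum of a singular handlebody, and the reason the relevant threshold is $3$ (the same one appearing in the Po condition, Definition \ref{Po-condition}). By $3$-almost convexity the two preferred lifts can be joined by a path of at most $N(3)$ fundamental domains of $X^3$ staying over the ball $B(n)$. Adjoining such a \emph{bounded bridge} closes up a loop of bounded length, which I then cap off by the corresponding relator $2$-handle, i.e. a $\Delta^3$-face-identification; this produces a cancelling $1$-handle/$2$-handle pair, precisely the \emph{easy id + nilpotent} pattern of (1 -- 4), so the enlarged object remains \textsc{gsc}, while over the bridge the previously hidden singularities become exposed and foldable. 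Adjoining one such bounded packet per horizontal adjacency yields a sub-object $X^3_\ast\subset X^3$ for which the zipping now achieves $\Psi\big(f|_{X^3_\ast}\big)=\Phi\big(f|_{X^3_\ast}\big)$; together with essential surjectivity and \textsc{gsc} this makes $f|_{X^3_\ast}$ a genuine $3$-dimensional \textsc{representation}. Its multiplicity stays under control: each $g\,\Delta^3$ has at most $\#\Sigma$ incident faces, each incident adjacency contributes at most $N(3)$ bridge-domains localized near it, and the combing part contributes one, so the number of domains over $g\,\Delta^3$ is bounded by a single constant, and a fortiori by some $\mu\colon\mathbb Z^+\to\mathbb Z^+$ as in Definition \ref{easy}(I). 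Hence $f|_{X^3_\ast}$ is \textbf{easy}, so by Definition \ref{easy-group} $G$ is easy; and since hyperbolic groups and NIL groups are almost convex — in particular $3$-almost convex — for suitable generators, the final assertion follows.

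The main obstacle is the balance carried out in the third step: the naive representation already has $\Psi=\Phi$ but is non-proper, whereas any lean combing skeleton is proper but loses $\Psi=\Phi$; the real content is that $3$-almost convexity restores $\Psi=\Phi$ through \emph{uniformly bounded} fillings, so that properness and the representation condition (b) hold \emph{simultaneously}. Passing from the metric inequality ``path length $\le N(3)$'' to the topological statement that the corresponding mortal singularities are killed by foldings inside a bounded packet of domains — without the various bridges interacting badly or spoiling the easy id + nilpotent bookkeeping — is the technical heart, and it is carried out by adapting Lemma 3.2 and the surrounding zipping arguments of \cite{Po3-JDG} to the present singular-handlebody setting.
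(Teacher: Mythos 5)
You should first be aware that the paper's own ``proof'' of Proposition \ref{AC>easy} is a single sentence: it asserts that a proof can be extracted from \cite{Po3-JDG, Po5-Top2, PT2-k-weak}. So your overall strategy --- prune the tautological \textsc{representation} of Lemma \ref{lemma7} down to a geodesic tree, then use $3$-almost convexity to restore condition (b) of Definition \ref{repr} by bounded local additions, keeping the multiplicity over each $g\Delta^3$ bounded --- is indeed the strategy of the source the paper cites, and your properness/multiplicity count is fine. The problem is the middle step, where your mechanism is not just unproved but incorrect as described.

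You present $X^3_\ast$ as a sub-object of $X^3$. But the domain-adjacency graph of $X^3$ is the Cayley graph of the free group $\overline G$, i.e.\ a tree, so any union of fundamental domains of $X^3$ --- in particular your tree-plus-bridges --- is automatically arborescent: there are no loops to cap, and the talk of cancelling $1$-handle/$2$-handle pairs is vacuous. If instead you mean to adjoin the bridge and the direct face-identification as \emph{new} material and then cap, the accounting goes the other way: identifying two disjoint (contractible) faces of an already connected complex is homotopically the attachment of a $1$-handle, not a ``relator $2$-handle'', so bridge plus face-gluing creates \emph{two} independent loops, your single cap kills one of them, and each horizontal adjacency leaves a surviving $\mathbb Z$ in $\pi_1$ --- the result is not even simply connected, let alone \textsc{gsc}. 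More seriously, in either reading no ``hidden singularities become exposed'' along the bridge: a chain of domains of $X^3$ attached along faces is immersive near those faces (each face-gluing is injectively modelled on the corresponding gluing downstairs), and in this singular-handlebody setting the mortal singularities that launch the zipping are confined to the deeper (undrawable/vertex) strata. Since restriction to a subcomplex genuinely destroys $\Psi=\Phi$ --- this is exactly the phenomenon the paper records in the proof of Lemma \ref{lemma5}, where $\Psi_i \subset \Psi_\infty|_{Z^n_i}$ is in general strict --- the claim that one round of bounded bridges yields $\Psi\big(f|_{X^3_\ast}\big)=\Phi\big(f|_{X^3_\ast}\big)$ is precisely the whole content of the proposition, and your construction does not deliver it: what is actually needed is to adjoin, over each bounded loop, material shadowing a van Kampen filling (bounded, since the Dehn function evaluated at a bounded length is bounded) and then to prove that the resulting folds propagate so as to identify the two lifts of each shared face, all while preserving the easy id $+$ nilpotent bookkeeping. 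That is the technical content of \cite{Po3-JDG}, to which you ultimately defer; but as written your intermediate mechanism fails, and what survives of your argument is the same citation that constitutes the paper's proof.
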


 \begin{proof}
 From papers like \cite{Po3-JDG, Po5-Top2, PT2-k-weak},  can be actually extracted a proof of the following
more general fact: if a finitely presented group satisfies a ``nice'' geometric condition (like
e.g. Gromov-hyperbolicity, almost-convexity, automaticity, combability etc.),
then it is easy.  \end{proof}

\begin{rem} Another proof of a slightly weaker form of Proposition \ref{AC>easy} can be deduced  from the recent papers \cite{Ot-Ru1, Ot-Ru-Ta2}. Although this proof is very short, it relies on other deep results. First of all in \cite{Ot-Ru1} it is proved that almost-convex groups are tame 1-combable (see the next section for definitions and more details) and hence  \textsc{qsf} by \cite{MT}. By the main result of \cite{Ot-Ru-Ta2}, one can infer that such a group admits an easy \textsc{wgsc-representation} with an additional finiteness condition. This is weaker than being an easy group (where it is required an easy \textsc{gsc-representation}). However,  in dimension 3, this condition actually implies the simple connectivity at infinity (see \cite{Ot-Ru-Ta2}), namely the original motivation of \cite{Po3-JDG}. \end{rem}

 \subsection{Combings of groups}

 The techniques of the section above can also be adapted and used for proving a similar result, but for a different class of discrete groups: namely {\textbf{\textit{combable groups}}} (see \cite{ECHDLPT} for precise definitions and details). Let $G$ be a finitely presented group and
let $S=S^{-1}\subset G$ be a finite set of generators for $G$. We will
denote by $\mathcal G \equiv \mathcal G (G,S)$ the \textit{Cayley graph}
of $G$ with respect to $S$ (notice also that the dependance on the generating set $S$
will be no longer stressed).

A useful geometric property one may impose to finitely generated groups is the existence of a
``nice'' combing, where, roughly speaking,  a combing assigns to each vertex of the
Cayley graph a set of paths from the identity to the vertex, so
that, as the vertex varies, the path varies in a `reasonable' way too,
i.e. nearby elements of $G$ give paths which are uniformly near
and whose domains are almost the same. To be more precise:
a {\textbf{\textit{combing}}} of $G$ is, by definition, a choice, for each $g \in G$, of a
continuous (not necessarily geodesic) path of the Cayley graph of $G$, joining $1$ to $g$. One can also think of this  path as a function $s_g: \mathbb Z ^+ \to G$  such that $s_g(0)=1, d(s_g(t), s_g (t+1)) \leq 1$, and, for all sufficiently large  $t$, we have $s_g(t) = g$.

Abstracting from the properties of the class of automatic groups, W. Thurston
called a combing {\textbf{\textit{Lipschitz}}} if there are two constants $C_1, C_2$ such that,
for all $g, h \in G$ and $t\in \mathbb Z^+$, we have $d(s_g(t), s_h(t)) \leq C_1 d(g,h) + C_2$.
 A finitely generated group admitting a Lipschitz combing is said to be {\textbf{\textit{combable}}}. Note that combable groups are actually finitely presented \cite{ECHDLPT}.

 By combining the terminology and the
 methods of the proof of the section above, together with Theorem 2 from \cite{Po3-JDG}, one can actually show that  combable groups (satisfying an additional technical condition) are easy.

 \begin{rem} Note that the class of combable groups does not  contain the full class of almost-convex groups, and  vice-versa (see Figure 1.1 of \cite{Po3-JDG}). For instance, the 3-dimensional Heisenberg group as well as $SL(n,\mathbb Z)$ are not combable \cite{ECHDLPT}.
 \end{rem}

 More recently, in \cite{Ot-Po2}, we have proved a similar result for a slightly different class of  groups admitting a nice combing. Before stating our result, we need to recall several definitions. Let $X$ be the universal cover
of the standard 2-complex associated to some finite presentation of $G$. Choose a base
point $e$ in its 0-skeleton $e \in X^0$.
The following more general definitions of combings are due to Mihalik and Tschantz \cite{MT}:

\begin{defi}\label{combings}
 A {\textbf{\textit{$0$-combing}}} of a 2-complex $X$ is a  homotopy
$\sigma\colon X^0 \times [0,1] \rightarrow X^1$ for which
$\sigma(x,1)=x$ for all $x \in X^0$, and
$\sigma(X^0,0)=e$, (where $X^1$ is the 2-skeleton of
$X$).

A {\textbf{\textit{$1$-combing}}}  of the 2-complex $X$ is a
continuous family of paths $\sigma_p(t)$, $t\in[0,1]$,
joining each point  $p$ of the 1-skeleton of $X$ to  $e$, whose restriction to vertices is a $0$-combing.
This is a homotopy
$\sigma\colon X^1 \times [0,1] \rightarrow X$ for
which $\sigma(x,1)=x$ for all
$x \in X^1$, $\sigma(X^1, 0)=e$, and
$\sigma |_{X^0 \times [0,1]}$ is a $0$-combing.
\end{defi}

In the same spirit  as above Mihalik and Tschantz replaced the Lipschitz  condition on combings by
the following property, that is of topological nature:

\begin{defi}\label{tame-combings}
A $0$-combing is called {\bf{tame}} if for every compact set
$C \subseteq X$ there exists a compact set
$K \subseteq X$ such that for each $x \in X^0$
the set $\sigma^{-1}(C) \cap (\{x\} \times [0,1])$ is contained in one path
component of $\sigma^{-1}(K) \cap (\{x\} \times [0,1])$.

\vspace{0.1cm}
\noindent A $1$-combing is {\bf{tame}}  if its restriction to the
set of vertices is a tame $0$-combing and for each compact
$C\subset X$ there exists a larger compact $K\subset X$ such that
for each edge $e$ of $X$, $\sigma^{-1}(C) \cap (e \times [0,1])$
is contained in one path component of $\sigma^{-1}(K) \cap (e \times [0,1])$.

\vspace{0.1cm}
\noindent A group is  {\bf{tame  1-combable}} if the universal cover of some
(equivalently any, see \cite{MT})
finite 2-complex with given fundamental group admits a tame 1-combing.
\end{defi}

 Groups whose Cayley complexes admit nice (e.g. bounded, tame or Lipschitz) combings have
good algorithmic properties, like automatic groups and hyperbolic
groups, and were the subject of extensive study in the last twenty
years (see \cite{ECHDLPT, HM} and \cite{MT}), and, for instance,
group combings were essential ingredients in Thurston's study  of
fundamental groups of negatively curved manifolds \cite{Thu}.

\begin{rem} Tame combings can be thought of as those combings which avoid the
 \textit{``Whitehead nightmare"} (see the next section), as defined in  \cite{Po-contMath}, and
often mentioned  since (that is, when some of the paths $\gamma _g$
of the combing  come back again inside the ball $B_n$ after a long time
(i.e. for $g$ very far)).
\end{rem}

 Here are some important  results and problems concerning tame combable groups:
 \begin{itemize}
 \item Almost-convex groups are tame 1-combable \cite{HM};

 \item Tame 1-combable groups are \textsc{qsf} \cite{MT};

 \item  All asynchronously automatic and semihyperbolic groups have tame 1-combings \cite{MT};

 \item If a finitely presented group $G$ has an asynchronously bounded, tame 0-combing, then $G$ has a tame 1-combing \cite{MT};

 \item Being tame 1-combable is a quasi-isometry invariant \cite{Br2};

 \item There are still no examples of
finitely presented groups which are not tame 1-combable;

\item It is conjectured that \textsc{qsf} groups are tame 1-combable
(and hence that all finitely presented groups are tame 1-combable).

\item If $ X$ is a finite complex and $\pi_1(X) =G$,  then $G$ has a tame 1-combing if and only if for each finite subcomplex $C$ of $X$, $\pi_1 (X-C)$ is finitely generated (in other terminology, the fundamental group at infinity of $G$ is pro-finitely generated) \cite{MT};

    \item If a group $G$ is not tame 1-combable, then $\pi_1 ^{\infty} (G)$ is not pro-finitely generated.

 \end{itemize}

 All this being said, we are finally able to state our main theorem from \cite{Ot-Po2}, whose proof is somehow similar to that of Proposition \ref{AC>easy}:

 \begin{prop}(\cite{Ot-Po2})\label{0-combings>easy}
A finitely presented group admitting a Lipschitz and tame
0-combing is easy.
\end{prop}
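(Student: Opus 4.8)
The plan is to follow the same schema as the proof of Proposition \ref{AC>easy}, using the $0$-combing as the ``nice geometric condition'' that guides the construction of an easy \textsc{representation}, in the spirit of Theorem 2 of \cite{Po3-JDG} and of the combings argument of \cite{Ot-Po2}. Concretely, I would start from the standard $3$-dimensional \textsc{representation} $f\colon X^3=\sum_{\overline{g}\in\overline{G}}\overline{g}\Delta^3 \to \widetilde{M^3(G)}=\sum_{g\in G}g\Delta^3$ produced in Section 2.9 (Lemma \ref{lemma7}), which is already \textsc{gsc} and satisfies $\Psi(f)=\Phi(f)$ but is wildly non-\textsc{proper} because it uses \emph{all} words of the free group $\overline{G}$. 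The idea is to prune $X^3$ down to the sub-handlebody $X^3_\sigma\subset X^3$ spanned only by those fundamental domains $\overline{g}\Delta^3$ whose word $\overline{g}$ is a prefix of one of the combing paths. Since all combing paths start at $1$, their shared prefixes organise $X^3_\sigma$ into an arborescent ``combing tree'' of fundamental domains, which is therefore automatically \textsc{gsc}, whose restricted map stays non-degenerate, and which is essentially surjective because every $g\in G$ is reached by its combing path, so every $g\Delta^3$ lies in the image.

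Next I would use the Lipschitz hypothesis to produce the properness function $\mu$ of Definition \ref{easy}(I). A fundamental domain of $X^3_\sigma$ lying over $g\Delta^3$ corresponds to a prefix $w$ of a combing path with $\chi(w)=g$, i.e. to a combing path visiting (a neighbourhood of) the vertex $g$ at some time $t$. The synchronous fellow-traveller inequality $d(s_g(t),s_h(t))\le C_1\,d(g,h)+C_2$ confines all combing paths that meet a fixed neighbourhood of $g$ at a given time to a ball of radius $O(1)$ about $g$, while the constraint $\|s_h(t)\|\le t$ ties the relevant times $t$ to $\|g\|$. Because prefixes are shared in the combing tree, the admissible prefixes reaching $g$ are then controlled in number by $\|g\|$ alone, yielding the desired bound $\mu(\|g\|)$ and making $f|_{X^3_\sigma}$ \textsc{proper}, which is condition (I).

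The role of the tameness hypothesis is to guarantee condition (II), that $M_2(f)$ and $f(X^3_\sigma)$ are \emph{closed}. Tameness (Definition \ref{tame-combings}) says precisely that each combing path meets a given compact set in a single ``block'', so that no path returns to a fixed ball after travelling far away; this is exactly the absence of the \emph{Whitehead nightmare}, and it prevents the double points from accumulating at finite distance, forcing both $M_2(f)$ and the image to be closed. Since the covering structure of Lemma \ref{lemma7} is unchanged over the combed subcomplex, the equality $\Psi(f)=\Phi(f)$ should survive the restriction, so that $f|_{X^3_\sigma}$ is a genuine \textsc{representation} in the sense of Definition \ref{repr}, easy in both senses; by the equivalence (I)$\Leftrightarrow$(II) recorded after Definition \ref{easy} the two verifications are consistent, and $G$ is an easy group.

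The main obstacle I expect is to carry out the pruning so that all three axioms of Definition \ref{repr} survive \emph{simultaneously} with the uniform metric bound. Verifying $\Psi(f|_{X^3_\sigma})=\Phi(f|_{X^3_\sigma})$ is the delicate point, since restricting to the combing tree could in principle destroy the matching of singularities unless one checks that every folding move in the zipping of $f$ stays inside $X^3_\sigma$; and the $0$-combing lives on the $0$-skeleton of the $2$-complex, so one must first promote it, via its Lipschitz and tame features, to honest control over the $3$-dimensional fundamental domains $\overline{g}\Delta^3$ of the singular-handlebody model before the counting argument for $\mu$ can be run. Handling these two points is where the real work lies, and it is exactly the content imported from Theorem 2 of \cite{Po3-JDG} and adapted to the $0$-combing setting of \cite{MT, Ot-Po2}.
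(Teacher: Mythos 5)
Your plan founders on exactly the point you flag as ``delicate,'' and the problem there is not delicacy but outright failure: condition (b) of Definition \ref{repr}, namely $\Psi(f)=\Phi(f)$, does not survive the pruning. The restriction of the Kindergarten map of Lemma \ref{lemma7} to the combing prefix tree $X^3_\sigma$ is generically an \emph{immersion}: around any point of $X^3_\sigma$, the fundamental domains present form a partial, tree-like configuration which maps injectively into the full configuration downstairs, so $\mathrm{Sing}(f|_{X^3_\sigma})=\emptyset$ unless some chain of prefixes happens to wrap fully around an edge of the tessellation. By the characterization in Lemma \ref{lemma1}(ii) (applied with $\mathcal R$ the diagonal), an immersion has $\Psi=\{\mathrm{diagonal}\}$, whereas $\Phi(f|_{X^3_\sigma})$ is certainly non-trivial: two distinct prefixes ending at the same $g$, or the two lifts of any non-tree face, give double points. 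Concretely, for $G=\mathbb Z^2$ with the standard L-shaped combing (which is Lipschitz and tame), $X^3_\sigma$ is the union of one fundamental domain per group element glued only along the spanning tree $\{x^a\}\cup\{x^ay^c\}$: an immersion with many double points and no mortal singularities, hence not a \textsc{representation} at all. The mortal singularities that drive the zipping in Lemma \ref{lemma7} come precisely from the redundancy you prune away (the fan of infinitely many $\overline g\Delta^3$ wrapping around each edge), so your hoped-for check that ``every folding move stays inside $X^3_\sigma$'' is not delicate but vacuously false: the restricted map admits no folding moves whatsoever. This is why the actual proof in \cite{Ot-Po2}, like its model \cite{Po3-JDG} behind Proposition \ref{AC>easy}, runs in the opposite direction: one \emph{builds up} a representation adapted to the combing, attaching, besides the domains along combing paths, extra material (hence extra singularities) so that all double points become zippable, and only then uses the hypotheses to show that this forced zipping has bounded reach.

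A second, independent gap is your division of labor between the two hypotheses. Properness (Definition \ref{easy}(I)) cannot come from the Lipschitz condition alone: the inequality $\|s_h(t)\|\le t$ bounds the visiting time $t$ of a combing path at $g$ from \emph{below} by $\|g\|$, not from above, so nothing you have said prevents infinitely many combing paths from passing through $g$ at arbitrarily late times along pairwise distinct prefixes, which would place infinitely many fundamental domains over $g\Delta^3$. It is exactly tameness that excludes such late returns --- this is the paper's remark that tame combings are those avoiding the Whitehead nightmare --- while tameness alone is also insufficient (every one-ended group admits a tame $0$-combing, as the paper notes). The two hypotheses must be played against each other in both conditions (I) and (II), and organizing that interplay is where the real content of \cite{Ot-Po2} lies.
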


\begin{rem}\
\begin{itemize}

\item If the group $G$ has one end, then there always exists a tame
0-combing for the Cayley graph of $G$.
This means that the additional Lipschitz condition in  Proposition
\ref{0-combings>easy} is necessary.

\item Proposition \ref{0-combings>easy} does not follow from  the results of
\cite{Po3-JDG}. There, no tameness hypothesis is considered, while, on the other hand,
 there is the additional condition that any two points of the boundary of
$n$-balls can be joined by paths of length $\leq C n
^{1-\epsilon}$, for some constant $C$.

\item Our result may be compared with  Lemma 4.1 of \cite{MT}, which states that if a finitely presented group $G$ has an asynchronously bounded, tame 0-combing, then $G$ has a tame 1-combing and hence is \textsc{qsf}.
\end{itemize}
\end{rem}

 Now, an interesting thing to do in the next future would be to try to generalize
Proposition \ref{0-combings>easy} for (the far more general) tame 1-combable groups, with no Lipschitz or boundedness
condition. Our feeling is that a tame 1-combing is very close to a
Lipschitz tame 0-combing, and hence, we do believe the techniques
we used in \cite{Ot-Po2} have a good chance to work also for such groups.

\subsection{The Whitehead nightmare}

It is now a good moment to say something about the Whitehead nightmare which we have already mentioned
several times, and about  its connection with finitely presented groups. Consider a 3-dimensional \textsc{representation}  $f: X^3 = \bigcup _{i\in I} \Delta _i \longrightarrow \widetilde {M^3(G)}=
{\bigcup}_{\gamma \in G} \ \gamma \delta$. Generally speaking, without any other additional information and/or conditions, one normally finds the
following situation, which, in papers like \cite{Po-contMath}, the second
author (V.P.) has called the {\textbf{\textit{Whitehead nightmare}}}:
$$ \# \big \{ \Delta _i , \mbox { s.t. } f(\Delta _i) \cap
\gamma \delta \neq \emptyset \big\} = \infty, \ \forall \gamma \in
G .$$
So, the Whitehead nightmare, for a \textsc{representation}, means that $f(X^3)$ explores infinitely many times any nook and hook of $\widetilde {M^3(G)}$. We feel that this is somehow reminiscent of what the Feynman path integral does.
The 2-dimensional analogue of the condition above is the
following condition:
$$ M_2(f) \subset X^2  \mbox { is \textbf{not} closed}.$$
This is the common  situation for standard 2-dimensional
\textsc{representations} and one has to start by living with
it and look at the accumulation pattern of $M_2(f)$ inside $X^2$.

However, in \cite{Ot-Po3}, we were able to prove the following result, that practically tells us that all finitely presented groups admit sufficiently nice \textsc{representations} avoiding a milder version of the Whitehead nightmare:
\begin{thm}[D.E. Otera - V. Po\'enaru, \cite{Ot-Po3}]\label{WN}\
\begin{enumerate}

\item (3$^d$-part)  For any finitely presented \textsc{qsf} group G, there exists a
locally finite 3-dimensional  \textsc{wgsc-representation}, $X^3 \overset
{f} {\longrightarrow} \widetilde {M^3(G)}$, such that the following conditions are satisfied for any $\gamma \in
G$:

There is a free action $G \times X^3 \longrightarrow X^3$, and  $f$  is \textbf{equivariant}, and
there is a constant $ C = C(\|\gamma\|) > 0$  (depending on the word-length $\| \cdots \|$ of $\gamma$)  such that
 one has $$\# \{\Delta _i \
| \  f(\Delta_i) \cap \gamma \delta  \neq \emptyset \} < C. $$

In particular, any given domain $\gamma \delta \subset \widetilde {M^3(G)} =
\bigcup _{\gamma \in G}  \gamma \delta$
downstairs, can only be hit \textbf{finitely many} times by the image of a large domain
$\Delta \subset X^3 = \bigcup _{j\in J} \Delta _j$ from upstairs.

\item (2$^d$-part) For any finitely presented \textsc{qsf} group $G$, there exists a
locally finite 2-dimensional \textsc{wgsc-representation} $X^2 \overset
{f} {\longrightarrow} \widetilde{M^3(G)}$ which is both
\textbf{equivariant}, and which also satisfies the following condition:
$$ \mbox{ Both } f(X^2) \subset \widetilde {M^3(G)}
 \mbox { and } M_2(f) =\big \{ x \in X^2 \ | \ \sharp \{f^{-1} (f(x))\} > 1\big \} \subset X^2 \mbox { are \textbf{closed}}.
 $$
\end{enumerate}
\end{thm}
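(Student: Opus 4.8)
The plan is to establish the 3-dimensional first part and then to deduce the 2-dimensional second part from it. For the reduction I would apply, \emph{equivariantly}, the passage from easy 3-dimensional to easy 2-dimensional representations recorded at the end of the previous section: starting from the locally finite equivariant \textsc{wgsc}-representation $f\colon X^3\to\widetilde{M^3(G)}$ produced in part (1), subdivide finely and $G$-equivariantly, and take a generic equivariant perturbation of the restriction of $f$ to the 2-skeleton of the subdivided $X^3$. The finite-hit bound of part (1) is precisely condition (I) of Definition \ref{easy}, and by the equivalence (I) $\Leftrightarrow$ (II) noted there the resulting 2-dimensional representation has both $M_2(f)$ and $f(X^2)$ closed, which is exactly the assertion of part (2). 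The only point to verify is that the perturbation, carried out on a single fundamental domain and then propagated by the $G$-action, keeps the map equivariant and the source \textsc{wgsc}.

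For part (1) I would take as starting point the ``Kindergarten'' representation of Lemma \ref{lemma7}, $f\colon X^3 = \sum_{\overline g\in\overline G}\overline g\Delta^3 \to \widetilde{M^3(G)}$. This map is already non-degenerate, essentially surjective and \textsc{gsc} (being an arborescent union of \textsc{gsc} pieces), and it carries the natural left action of the free group $\overline G$ compatibly with the deck action of $G$, via the surjection $\chi\colon \overline G \to G$. Its sole defect is exactly the Whitehead nightmare: since infinitely many words $\overline g$ map to a given $g = \chi(\overline g)$, each domain $g\delta$ downstairs is hit infinitely often. The whole task is thus to prune $X^3$ down to a locally finite sub-representation on which the residual symmetry descends to a genuine \emph{free} $G$-action, while retaining $\Psi(f)=\Phi(f)$.

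This is where the \textsc{qsf} hypothesis enters. By Definition \ref{qsf}, applied to a $G$-cocompact exhaustion $\widetilde{M^3(G)}=\bigcup_n B(n)$ by balls, each $B(n)$ admits a compact simply connected Dehn-filling $K_n$ with $M_2\cap j(B(n))=\emptyset$; it is this simple connectivity of the $K_n$, rather than collapsibility, that yields only the \textsc{wgsc} (not \textsc{gsc}) conclusion. The Dehn condition is the crucial quantitative input: it forces the double points introduced by the filling to stay away from the prescribed inner region, which is what bounds how many domains can pile up over a fixed $\gamma\delta$. I would combine these fillings with the compactness argument already used in Lemma \ref{lemma5} and again in Lemma \ref{lemma8}: the function $\overline M$ (equivalently $\mathcal N$) measures how far one must zip to resolve all singularities lying over a bounded region, and translating this resolution radius into geometric multiplicity should yield the bound $\#\{\Delta_i \mid f(\Delta_i)\cap\gamma\delta\neq\emptyset\}<C(\|\gamma\|)$. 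The uniqueness clause of Lemma \ref{lemma1} then guarantees that the zipping relation on the pruned space is still all of $\Phi$, so we retain a bona fide representation.

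The main obstacle, where essentially all the work lies, is to perform the pruning \emph{equivariantly}. The \textsc{qsf} fillings $K_n$ are produced with no reference to the group, so stitching them — together with their $G$-translates — into a single space carrying a coherent free $G$-action, while simultaneously keeping local finiteness, the bound $C(\|\gamma\|)$ and the equality $\Psi(f)=\Phi(f)$, requires an equivariant zipping and coherence argument rather than a naive union. Concretely, one must choose the fillings over a fundamental domain and its neighbours so that the identifications forced by $j\colon \Sigma\to\Sigma$ are respected, and then propagate them by the group; the delicate point is that two fillings glued along a common face must induce the \emph{same} zipping there. Controlling this compatibility, and converting the purely combinatorial \textsc{qsf} inequality into the explicit geometric multiplicity $C(\|\gamma\|)$, is the technical heart of the argument.
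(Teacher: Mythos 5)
The paper itself does not prove Theorem \ref{WN} (it is quoted from \cite{Ot-Po3}), so your proposal can only be measured against the construction that paper and this survey's surrounding machinery indicate; measured that way, your reduction of part (2) to part (1) --- equivariant fine subdivision, generic perturbation of the $2$-skeleton, and the implication (I) $\Rightarrow$ (II) of Definition \ref{easy} --- is essentially the intended route, modulo the equivariance bookkeeping you yourself flag. The genuine gap is in your part (1). Your plan is to \emph{prune} the Kindergarten representation $X^3=\bigcup_{\overline g\in\overline G}\overline g\Delta^3$ of Lemma \ref{lemma7} down to a locally finite subcomplex carrying a free $G$-action, using the \textsc{qsf} fillings to decide what to keep. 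This cannot work as stated, for two structural reasons. First, the fillings $K_n$ provided by Definition \ref{qsf} are \emph{abstract} compact simply-connected complexes equipped with simplicial maps into $\widetilde{M^3(G)}$; they are not subcomplexes of the Kindergarten $X^3$, and no amount of deleting fundamental domains from the arborescent union can make them appear there. Second, no pruned subspace of $X^3$ inherits a free $G$-action: the symmetry of $X^3$ is an action of the free group $\overline G$, and a family of representatives for $\chi\colon\overline G\to G$ is a set-theoretic section, not a subgroup, so the ``residual symmetry'' does not descend to any $G$-invariant subcomplex --- while any genuinely $\overline G$-invariant subcomplex still exhibits the full Whitehead nightmare. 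Relatedly, your appeal to the uniqueness clause of Lemma \ref{lemma1} to conclude that the pruned map still satisfies $\Psi=\Phi$ is a non sequitur: that equality is a property that must be \emph{forced} by the construction, by exhibiting zipping paths for all double points, exactly as the survey stresses for Theorems \ref{uniform} and \ref{uniform-zipping-lenght}; uniqueness of $\Psi$ says nothing about whether $\Phi$ coincides with it after material has been removed.

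The actual argument runs in the opposite, additive direction: one \emph{builds} $X^3$ from scratch as a $G$-indexed union of abstract copies $g\cdot K$ of one sufficiently large \textsc{qsf} filling $K$ of a compact neighbourhood $k$ of the fundamental domain, glued along the overlap regions; here the Dehn condition $M_2\cap j(k)=\emptyset$ is what makes those overlaps \emph{embedded} in each copy, so that the gluings are coherent and the result is locally finite --- its role is not, as you suggest, to bound multiplicities. The free action and equivariance are then automatic from the $G$-indexing (this is the same ``symmetry-forcing'' philosophy as the abstract Proposition in the comments on Theorem \ref{uniform}); the bound $C(\|\gamma\|)$ comes from compactness of $K$ together with proper discontinuity and cocompactness of the deck action, not from the Dehn condition; and \textsc{wgsc} comes from simple connectivity of the fillings via a van Kampen argument on the exhaustion, which is precisely why only \textsc{wgsc}, and not \textsc{gsc}, is obtained. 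Your proposal correctly identifies all the ingredients (\textsc{qsf} fillings, the Dehn condition, compactness, the need for equivariant coherence of fillings over adjacent domains), but assembles them by subtraction from the wrong ambient object instead of by equivariant construction from the abstract fillings, and that difference is not repairable by adding detail to the pruning picture.
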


\begin{rem}\
\begin{itemize}

\item The finiteness condition of point 1. above can also be replaced by the following
variant: there exist \textbf{equivariant triangulations} for
$\widetilde{M^3(G)}$ and for $X^3$, and also a constant $C'$
such that, for any simplex $\sigma \subset \widetilde {M(\Gamma)}$,
we should have
$$ \# \big \{\mbox {simplexes } S \subset X^3 \mbox {, s.t. }
f(S) \cap \sigma \neq \emptyset  \big\} < C'. $$

\item We conjecture that the same result can be improved for \textsc{gsc-representations}. This would imply that being easy is equivalent to the \textsc{qsf} property for finitely presented groups (for a partial result see \cite{Ot-Ru-Ta2}).

    \item To sum up, one should understand that the presence of a discrete group action somehow should force \textsc{representations}
to be easy, namely to avoid the Whitehead nightmare.

\end{itemize}
\end{rem}

 \subsection{Equivariant REPRESENTATIONS and the uniform zipping length}
 Let us move now to more serious, adult stuff.

 \begin{thm}[Existence of \textbf{equivariant} \textsc{representations}; V. Po\'enaru, \cite{Po_QSF1_Geom-Ded}]\label{uniform}
  Let $G$ be any finitely presented group. There is then a 3-dimensional \textsc{representation}
  $$ f : X^3 \longrightarrow \widetilde {M^3(G)}$$
  such that:
  \begin{enumerate}
  \item The singular handlebody $X^3$ is \textbf{locally finite}.
\item There is a free action $$G \times X^3 \to X^3$$ such that $f$ is \textbf{equivariant}, meaning that for any $x \in X^3$, $g \in G$, we have  $$f (gx)=  g (f(x).$$
\end{enumerate}
 \end{thm}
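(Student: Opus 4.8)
The natural starting point is the naive \textsc{representation} $f_0 : X_0^3 \to \widetilde{M^3(G)}$ of Lemma \ref{lemma7}, where $X_0^3 = \sum_{\overline g \in \overline G} \overline g \Delta^3$ is the arborescent union indexed by the free group $\overline G$. This object is already \textsc{gsc} and satisfies $\Psi(f_0) = \Phi(f_0)$, but it has exactly the two defects the theorem must repair: it is not locally finite (a given $g\Delta^3 \subset \widetilde{M^3(G)}$ is hit by infinitely many $\overline g \Delta^3$, since $\chi : \overline G \to G$ is far from injective), and it carries only the $\overline G$-action of diagram (1 -- 19), not a genuine $G$-action. The plan is to pass from $X_0^3$ to a quotient $X^3$ by performing, $G$-periodically, just enough of the zipping to reglue the redundant fundamental domains, while using the uniform zipping length to keep the result locally finite.

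The engine is Lemma \ref{lemma8}. I would fix $M_0$ large enough that all identifications $f_0$ makes across a single fundamental domain and its immediate neighbours are recorded inside $X_0^3 | M_0$, and set $R = \overline M(M_0)$; the content of Lemma \ref{lemma8} is that the corresponding mortal singularities are all killed by folding maps supported inside $X_0^3 | R$, a uniformly bounded region. By the homogeneity of the $G$-action on $\widetilde{M^3(G)} = \sum_{g\in G} g\Delta^3$, the same bounded zipping pattern can be translated so as to sit over every $g\Delta^3$. I would then define $X^3$ as the quotient of $X_0^3$ by the smallest $G$-periodic, $f_0$-admissible equivalence relation $\mathcal R$ containing all these bounded folds; equivalently, one zips $X_0^3$ only up to depth $R$ and repeats the prescription equivariantly through $\chi$. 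Because over each $g\Delta^3$ only the domains within combinatorial distance $R$ survive the regluing, the induced map $f : X^3 \to \widetilde{M^3(G)}$ is proper at the level of fundamental domains, so $X^3$ is locally finite, and the $G$-periodicity of $\mathcal R$ produces a free action $G \times X^3 \to X^3$ for which $f$ is equivariant.

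It then remains to check that $f : X^3 \to \widetilde{M^3(G)}$ is still a \textsc{representation} in the sense of Definition \ref{repr}. Non-degeneracy and essential surjectivity are inherited from $f_0$, since $\mathcal R \subset \Phi(f_0)$ and the handle-to-handle structure of a singular handlebody is respected by folding maps. For the condition $\Psi(f) = \Phi(f)$, one argues that the quotient only removes singularities already accounted for by the zipping of $f_0$: since $\Psi(f_0) = \Phi(f_0)$ and $\mathcal R \subset \Psi(f_0)$, the residual folding maps descend to a zipping of $f$, whence $\Psi(f) = \Phi(f)$. The delicate structural point is to preserve \textsc{gsc}: I would verify that collapsing $X_0^3$ along the admissible, bounded folds of $\mathcal R$ keeps the arborescent collapsibility structure intact, so that each translated block is \textsc{gsc} and the blocks reassemble with an easy id + nilpotent geometric intersection matrix as in (1 -- 3) and (1 -- 4).

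The main obstacle is exactly this last reconciliation. There is a genuine tension: forcing $\Psi(f) = \Phi(f)$ pushes one to unroll $X_0^3$ as much as possible (toward the full naive cover), whereas local finiteness together with a cocompact $G$-action pushes one to unroll as little as possible (toward $\widetilde{M^3(G)}$ itself, which is \emph{not} \textsc{gsc}). The uniformity in Lemma \ref{lemma8} is precisely what makes a bounded-depth, $G$-periodic compromise possible at all; the hard technical work is to show that the local foldings chosen over different group elements are mutually compatible, so that $\mathcal R$ is simultaneously $G$-invariant, $f_0$-admissible, and gentle enough that the quotient $X^3$ remains a \textsc{gsc} locally finite singular handlebody. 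Establishing that \textsc{gsc} survives this equivariant quotient is where the real difficulty lies.
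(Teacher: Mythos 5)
Your strategy runs in the opposite direction from the one that can work, and the obstruction is concrete. You want the free $G$-action on $X^3 = X_0^3/\mathcal R$ to come from the $\overline G$-action of diagram (1 -- 19) descending through $\chi$. For an element $k \in \ker\chi$ to act trivially on the quotient you need $(p, kp) \in \mathcal R$ for \emph{every} $p \in X_0^3$; in particular every fundamental domain $w\Delta^3$ must be identified with every $kw\Delta^3$, $k \in \ker\chi$. Since $\mathcal R \subset \Phi(f_0)$ and $f_0$ injects on each domain, the quotient then has exactly one fundamental domain per element of $G = \overline G/\ker\chi$, glued according to the Cayley structure, and after taking the admissible closure it is precisely $\widetilde{M^3(G)}$ itself (the Kindergarten Lemma \ref{lemma6} read backwards). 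So there is no ``bounded-depth, $G$-periodic compromise'': any $\overline G$-invariant relation large enough to yield a $G$-action is automatically all of $\Phi(f_0)$, and then condition (a) of Definition \ref{repr} fails, because $\widetilde{M^3(G)}$ is in general not \textsc{gsc} -- and in the cases where it happens to be, your construction outputs the identity map, which proves nothing. The quantitative step collapses for the same reason: there is no finite $M_0$ such that $X_0^3 | M_0$ records ``all identifications $f_0$ makes across a single fundamental domain,'' since those identifications involve the whole fiber $\chi^{-1}(g)$, whose elements have unbounded word length; Lemma \ref{lemma8} bounds the zipping depth only for pairs of points that \emph{both} lie in $X^3|M$, which is a much weaker statement. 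Note finally that your claimed output -- boundedly many domains of $X^3$ over each $g\Delta^3$ -- would make the representation \emph{easy} in the sense of Definition \ref{easy}(I); you would thus be proving something far stronger than Theorem \ref{uniform}, essentially that every finitely presented group is easy, which the paper treats as a hard, partly conjectural matter and certainly not a consequence of Lemmas \ref{lemma7} and \ref{lemma8}.

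The paper's proof goes by \emph{extension}, never by quotient. Local finiteness is obtained by changing the handles themselves into ``bi-collared handles'' (lateral surfaces removed, i.e.\ sent to infinity), so that the infinitely many handle attachments over a given spot are corralled towards infinity instead of accumulating at finite distance; in particular the paper's locally finite, equivariant $X^3$ still hits each $g\Delta^3$ infinitely often, consistent with the easiness issue above. Equivariance is then \emph{forced} by enlargement: starting from a locally finite, non-equivariant representation $Y \to X$, one trivializes the $0$-skeleton as $Y^{(0)} = X^{(0)} \times \mathbb Z_+$ with the forced action $g(x,n) = (gx,n)$, replaces the higher skeleta by disjoint $G$-families $Z_0^{(\epsilon)} = G \times Y^{(\epsilon)}$ with forced face relations $\nu_i^{\epsilon}(gs^{\epsilon}) = g(\nu_i^{\epsilon}s^{\epsilon})$, and finally restores \textsc{gsc} by attaching the equivariant family of $2$-disks $gD_i^2$ along loops $g\lambda_i$ dual to the edges missed by a maximal tree. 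Adding $G$-orbits of cells is compatible with keeping the source \textsc{gsc} and locally finite; identifying cells of the naive representation, as you propose, is not.
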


 Contrary to the elementary, Kindergarten, Lemma \ref{lemma7}, this theorem requires considerably more work. Complete proofs are to be found in \cite{Po_QSF1_Geom-Ded} and see also \cite{Po_QSF_survey} and \cite{PT4-AMS}. In the next section we will provide some general ideas for the proof.

\vspace{3mm}

 This is a good place for explaining why, in places like in Definition \ref{repr}, we do not use the mundane 2-dimensional presentations of groups $G$, but rather our 3-dimensional presentations $M^3(G)$. Here is at least one good reason for this.

 For any kind of geometric presentation of a group, we certainly need cells, but call them now rather handles, of index $\lambda =0, \lambda =1, \lambda =2$ and of some dimension $n$. But when one wants to prove something like our Theorem \ref{uniform}, equivariance actually gets \textsc{forced}, as we shall soon explain (and see \cite{Po_QSF1_Geom-Ded, PT4-AMS} for a more detailed meaning of this ``forcing''). In the forcing process, to be explained a bit more in the additional comment to this section,  we meet the following problem: if one does things naively, then \textbf{infinitely} many handle attachments accumulate, or pile up, on the \textbf{same spot}. This would create a kind of non-local finiteness, which we could not live with. As we shall see, the cure for this disease asks us to start by sending the lateral surfaces $\delta H$ of the handles $H$ which we use, to infinity, and this eventually leads to a nicely convergent process, once the handle attachments are performed at higher and higher levels, closer and closer to the infinity where the lateral surfaces (now missing) have been sent. There is here a whole technology of so-called ``bicollared handles'' \cite{PT4-AMS}, to which we will come back in the second part of this survey by the second author.

 But then, handling the $\delta H$'s the way we just did, needs co-cores of positive dimensions (since $\partial ($cocore$) \approx  \delta H$), and this forces $n\geq 3$ on us. Next, when \textsc{representations} have to be used, in particular in the second part of the trilogy \cite{Po_QSF1_Geom-Ded, Po_QSF2, Po_QSF3}, we will have to investigate any nook and hook of  dense subspaces in the presentation space, with our zipping. And it is exactly zippings of 2-complexes into 3-dimensional $\widetilde {M^3(G)}$'s which come with a very rich structure of which we want to take advantage of. And this excludes the $n>3$'s too. So we are finally forced with $n=3$, and this not because of any particular love for 3-manifolds. Our $\widetilde {M^3(G)}$'s are anyway not 3-manifolds, but singular spaces.

 \begin{figure}

\centering
\includegraphics[width=110mm,scale=0.7]{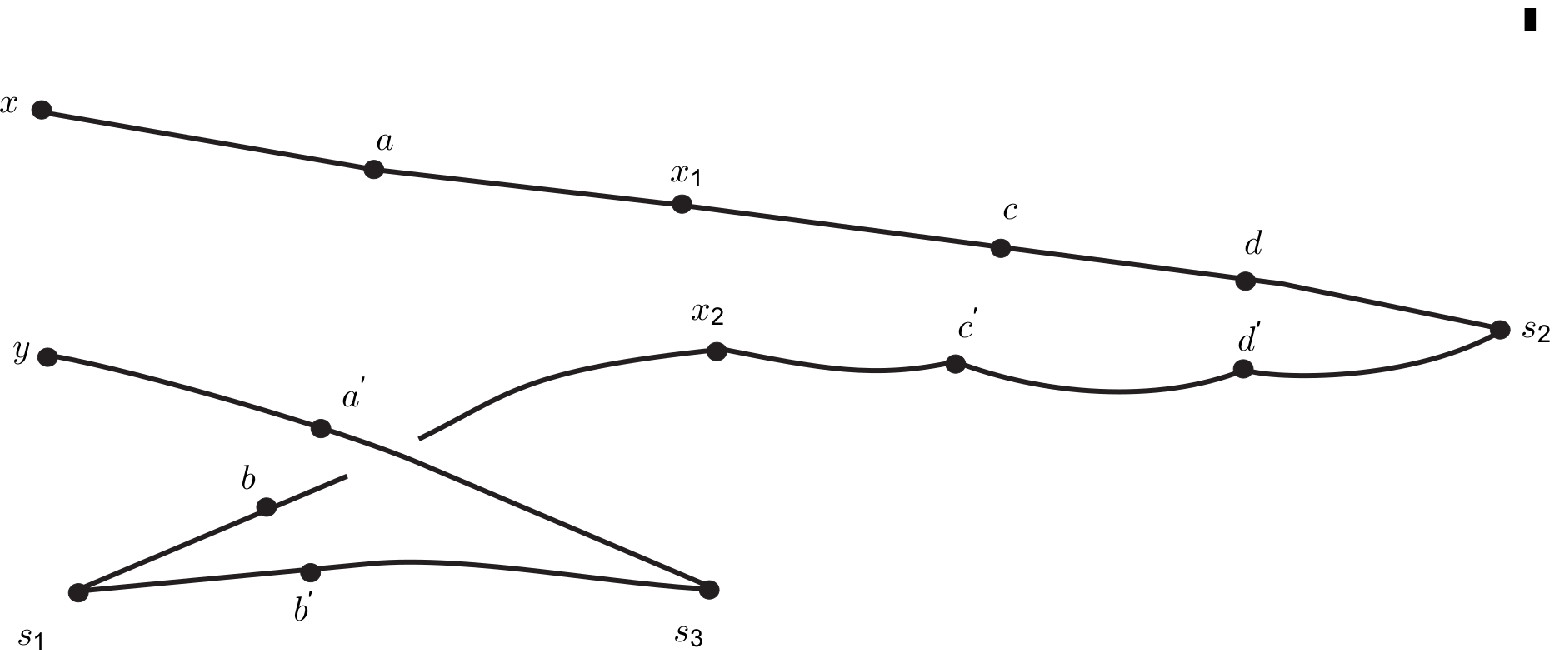}

\caption{A zipping path $\lambda (x,y) \subset \hat{M}^2 (f)$. Here $s_1, s_2, s_3 \in \rm{Sing} (f)$ and $(x_1, x_2, s_3)$ stands for a triple point on $M^3(f)$ and
$(x,y) \in M^2(f)$. Similarly, all the pairs like $(a, a')$, $(b,b') \in \lambda (x,y)$ are in $M^2(f)$. } \label{fig:6}

\end{figure}

 Before we can describe the next more serious result, let us go back to the condition $\Phi (f) = \Psi(f)$ which occurs in Definition \ref{repr}, and right now we will continue to stick to 3-dimensional \textsc{representations} of $G$. For $f: X^3 \to \widetilde {M^3(G)}$ we have $\rm{Sing} (f) \subset X^3$, the mortal singularities, not to be mixed up with the immortal singularities of $\widetilde {M^3(G)}$. With this, let
  $$ \Phi (f) \supset M^2(f) \subset X^3 \times X^3 \supset \rm{Sing} (f), $$ where the simplified notation $\rm{Sing} (f)$ means $\rm{Diag}(\rm{Sing} (f))$. Let also denote
  $$\hat M^2(f)= M^2(f) \cup \rm{Sing} (f).$$
   With all these things, the condition $\Psi(f) =\Phi(f)$ can also be expressed as follows: any $(x,y) \in M^2(f)$ can be joined, inside $\hat M^2(f)$, by a continuous path to the singularities. We call such a path a {\textbf{\textit{zipping path}}} $\lambda (x,y) \subset \hat M^2(f)$, or, also, a zipping strategy. Without trying to be more pedantic concerning this definition, Figure \ref{fig:6} should give a clear picture of what a zipping path means.

   So retain that the condition $\Phi(f)= \Psi(f)$  means that for every $(x,y) \in M^2(f)$ there is a zipping path $\lambda (x,y)$. This is certainly not unique for a given $(x,y)$.

Our $M^3(G)$ is a metric space. On the various handles one can define Riemannian metrics compatible on intersections. This lifts then to metrics on $\widetilde {M^3(G)}, X^3, X^3 \times X^3$, which are well-defined up to quasi-isometry.  With this, we can put up the following

\begin{defi}\label{zipping-lenght}
For $(x,y) \in M^2(f)$, $$l(x,y) = \underset{\lambda}{\rm{inf}} \{ \rm{length\  of\  the\  zipping\  path\ } \lambda (x,y) \}.$$

\end{defi}

\begin{thm}[Uniform zipping length; V. Po\'enaru, \cite{Po_QSF1_Geom-Ded}]\label{uniform-zipping-lenght}
For any finitely presented group $G$, there is a \textsc{representation} $f: X^3 \to \widetilde {M^3(G)}$ which is not only locally compact and equivariant, like in Theorem \ref{uniform}, but also has a uniformly bounded zipping path, i.e. there is a $K>0$ such that for all $(x,y) \in M^2(f)$ we have $l(x,y) <K$.
\end{thm}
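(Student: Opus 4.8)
The plan is to start from the locally finite, equivariant representation $f\colon X^3 \to \widetilde{M^3(G)}$ produced by Theorem \ref{uniform}, to equip $X^3$, $\widetilde{M^3(G)}$ and $X^3 \times X^3$ with the $G$-invariant (up to quasi-isometry) metrics described just before Definition \ref{zipping-lenght}, and then to improve this representation until the zipping length of Definition \ref{zipping-lenght} becomes uniformly bounded. The first observation is that the entire configuration — the double-point set $M^2(f)$, the mortal singularities $\mathrm{Sing}(f)$ sitting on the diagonal, and hence $\hat M^2(f) = M^2(f) \cup \mathrm{Sing}(f)$ — is invariant under the diagonal action of $G$, and that equivariance of $f$ carries any zipping path $\lambda(x,y)$ to a zipping path $g\cdot\lambda(x,y) = \lambda(gx,gy)$. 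Consequently $l$ is $G$-invariant up to the bounded quasi-isometry constants, so it suffices to bound $l$ on those double points $(x,y)$ with $x$ lying in a fixed compact fundamental domain $D \subset X^3$.

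The second, and decisive, point is that a uniform bound on $l$ is essentially a properness statement in disguise, which is why the result is deep. Indeed, since a zipping path terminates on the diagonal at some $(s,s)$ with $s \in \mathrm{Sing}(f)$, its length dominates the product distance from $(x,y)$ to the diagonal, which is at least $d_{X^3}(x,y)/\sqrt{2}$; hence $l(x,y) < K$ forces $d_{X^3}(x,y) < \sqrt{2}\,K$ for \emph{every} double point. By local finiteness of $X^3$ this bounds the diameter of every fibre of $f$ and rules out the Whitehead nightmare, i.e. it makes $f$ \textsc{proper} in the sense of Definition \ref{easy}. Thus the theorem cannot follow formally from Theorem \ref{uniform} alone: one must genuinely build a representation in which no long-range identifications occur.

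The heart of the construction is therefore to realise $\widetilde{M^3(G)} = X^3/\Psi(f)$ by an equivariant zipping of uniformly bounded range. Here I would exploit the handle-theoretic technology sketched in the comments following Theorem \ref{uniform}: build $X^3$ out of bicollared handles whose lateral surfaces $\delta H$ are sent to infinity, so that the infinitely many handle attachments forced by equivariance converge instead of piling up on a single spot, and then perform the folding maps of the $\Phi/\Psi$ theory level by level, each fold being created and immediately zippable inside a region of controlled size. Because $G$ acts cocompactly and the handles and the elementary folds occur in only finitely many $G$-orbit types near $D$, each local folding pattern carries a zipping path of bounded length; the global constant $K$ is then extracted by exactly the ``compactness argument'' that furnished the function $\mathcal N$ in the proof of Lemma \ref{lemma5}, now run equivariantly over the finitely many orbit types.

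The step I expect to be the main obstacle is precisely this bounded-range (equivalently, \textsc{proper}, Whitehead-nightmare-free) equivariant zipping. Controlling the accumulation of double points so that every $(x,y) \in M^2(f)$ remains within bounded $\hat M^2(f)$-distance of $\mathrm{Sing}(f)$ — rather than demanding an ever longer chain of folds as the second sheet recedes to infinity — is not a formal consequence of equivariance and local finiteness, and it is here that the full geometric machinery of \cite{Po_QSF1_Geom-Ded} (dilatations, bicollared handles, and the careful bookkeeping of the zipping strategy) is indispensable. Once this is achieved, the $G$-invariance of $l$ together with the finiteness of orbit types upgrades the local bounds to the single uniform constant $K$, completing the proof.
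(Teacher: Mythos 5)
Your second paragraph --- the ``decisive point'' --- contains the fatal error. The inequality $l(x,y)\geq d_{X^3}(x,y)/\sqrt{2}$ is fine, so a uniform bound on $l$ does bound the diameter of every fibre of $f$ \emph{in the lifted metric}; but the inference ``by local finiteness of $X^3$ this \ldots\ rules out the Whitehead nightmare, i.e.\ it makes $f$ \textsc{proper}'' is false. The metric on $X^3$ is pulled back from the \emph{compact} $M^3(G)$, and the $X^3$ of Theorem \ref{uniform} is assembled from bi-collared handles: each such handle contains infinitely many cells accumulating, in the lifted metric, on its deleted lateral surface, all within \emph{finite} diameter. So $X^3$ is locally finite as a complex but is \textbf{not} a proper metric space (bounded sets need not meet finitely many cells), and a fibre of diameter $<\sqrt{2}K$ can perfectly well run through infinitely many sheets lying over the same compact region downstairs. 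Bounded zipping length and the Whitehead nightmare are compatible; the former is a genuinely weaker and different kind of control than \textsc{proper}ness. A sanity check shows your claim cannot be right: if it were, Theorem \ref{uniform-zipping-lenght} would immediately make every finitely presented group easy, hence \textsc{qsf} by \cite{Ot-Po1}, and would also settle the paper's explicit conjecture (remark after Theorem \ref{WN}) that easy $\Leftrightarrow$ \textsc{qsf}; but the paper stresses that extracting \textsc{qsf} from the bounded-zipping-length \textsc{representation} is precisely the content of the remaining two papers of the trilogy \cite{Po_QSF2, Po_QSF3}.

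The same misconception undermines your construction step: you invoke a ``fixed compact fundamental domain $D\subset X^3$'', ``$G$ acts cocompactly'' and ``finitely many $G$-orbit types'' of handles and folds, but the free action $G\times X^3\to X^3$ of Theorem \ref{uniform} is \emph{not} cocompact --- the quotient $X^3/G$ is an infinite complex (already in the abstract symmetrization Proposition, $Z$ contains the $G$-orbits of infinitely many disks $D^2_i$), so $M^2(f)/G$ is not compact either, and equivariance alone does not reduce the bound to finitely many local configurations. This is why the paper's actual proof has a completely different shape from your one-shot ``each fold is immediately zippable in a region of controlled size'': one constructs an infinite chain of locally finite, equivariant extensions $f=f_0,\ f_1,\ f_2,\ldots$, $f_n: X^3_n\to \widetilde{M^3(G)}$, where the passage from $f_n$ to $f_{n+1}$ adds bi-collared handles whose new zipping paths \textbf{force} the old double points $M^2(f_n)$ to have uniformly bounded zipping length, at the price of creating fresh, completely uncontrolled double points $M^2(f_{n+1})-M^2(f_n)$; the whole difficulty, for which your proposal has no counterpart, is making this infinite process converge \cite{Po_QSF1_Geom-Ded}. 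The paper even states that such an \textsc{infinite process} is \emph{necessary} if one wants bounded zipping length together with local finiteness and equivariance, so any argument of your type, which tries to get the bound in a single equivariant construction from ``finitely many orbit types'', is structurally doomed.
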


The proof is to be found in \cite{Po_QSF1_Geom-Ded}. Just like for equivariance, the bounded zipping length has to be \textbf{forced}. The group structure is essential for the proof and we conjecture that for $Wh^3$ there is \textsc{no} bounded zipping length. Also, strangely enough, the proof of Theorem \ref{uniform-zipping-lenght} requires an infinite process.

\subsection{Comments concerning the proofs of Theorems \ref{uniform} and \ref{uniform-zipping-lenght}}

Concerning the \textbf{locally finiteness} in Theorem \ref{uniform}, in order to achieve it, we have to use the so-called ``bi-collared handles'' which we will describe a bit more in Part II of this survey.

Topologically speaking, a \textit{bi-collared handle} is a 3-dimensional handle with the lateral surface removed (or, rather, sent to infinity). There are also two layered structures in this 3-dimensional object, one coming from the attaching zone, the other one going towards that removed lateral surface. With this, when a $(\lambda +1)$-handle is to be attached to the union of all handles of index $\mu \leq \lambda $, then the various intersections
$\{$attaching zones of $(\lambda +1)$-handles$\} \cap \{$a given $\mu$-handle$\}$ (and for a given $\mu$-handle there are infinitely many of them), can be corralled towards infinity, insuring thus our local finiteness.

We move now to the equivariant part of Theorem \ref{uniform}, and there we need a \textbf{symmetry-forcing} procedure of which we only present here an \textsc{abstract} version. In \cite{Po_QSF1_Geom-Ded} this piece of abstract nonsense is transformed into geometry.

So, we are given a simply-connected simplicial complex $X$, endowed with a free action of some discrete group $G$, $G \times X \to X$. We
are also given a second simplicial complex $Y$, coming with a non-degenerate simplicial map $F: Y \to X$. Think here of $G \times X \to X$ as being our $G \times \widetilde {M^3(G)}\to \widetilde {M^3(G)}$ and of $F: Y \to X$ as being some locally-finite \textsc{representation}, not yet equivariant.

If $X^{(\epsilon)}$, $Y^{(\epsilon)}$ are the respective $\epsilon$-skeleta, it is assumed here that the map $F: Y^{(0)} \to X^{(0)}$ is surjective, with countable fibers. For the simplicity of the exposition, we assume $X$ and $Y$ to be 2-dimensional.

\begin{prop}
There is an extension $F_1: Z \to X$ of $F: Y\to X$, coming with the commutative diagram
$$\begin{diagram}
Y&&\rInto&&Z \\
    &\rdTo_{F}& &\ldTo_{F_1}\\& &X
\end{diagram} $$

\noindent which is such that:
\begin{itemize}

\item The map $F_1$ is simplicial and non-degenerate;

\item $Z$ is geometrically simply connected;

\item There is a free action $G \times Z \to Z$ such that $F_1$ is equivariant.

\end{itemize}
\end{prop}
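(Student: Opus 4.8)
The plan is to \emph{force} the $G$-symmetry onto $Y$ by manufacturing one copy of $Y$ for every element of $G$ and letting $G$ permute the copies, each copy being pushed into $X$ through the corresponding translate of $F$; the simple connectivity of $X$ is then exploited to repair the resulting complex into a \textsc{gsc} one without disturbing the symmetry. Concretely, I would first build the $0$- and $1$-skeleton of $Z$ as $G\times Y$: for each $g\in G$ take a copy $\{g\}\times Y$, let $G$ act by $h\cdot(g,y)=(hg,y)$, and set $F_1(g,y)=g\,F(y)$. Since $F$ is non-degenerate and each $g$ acts simplicially on $X$, the map $F_1$ is non-degenerate; the action is visibly free; and as $F$ is surjective on vertices with countable fibres, $F_1$ is a vertex-surjection and $Z$ stays countable. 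The copy $\{1\}\times Y$ provides the required inclusion $Y\hookrightarrow Z$ with $F_1|_Y=F$.

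This $G\times Y$ is disconnected, so next I would connect it equivariantly using that $G$ is finitely generated (it is finitely presented). For each generator $s$ pick a vertex $v_s$ and, since $X^{(1)}$ is connected, an edge-path $\alpha_s$ in $X$ from $F(v_s)$ to $s\,F(v_s)$; attach an arc from $(1,v_s)$ to $(s,v_s)$ carried by $\alpha_s$ and spread it over $G$ by the action. These connecting arcs mirror the Cayley graph of $G$, rendering the complex connected while preserving equivariance and non-degeneracy. At this stage $\pi_1$ is generated by the loops inside the copies of $Y$ and the loops realising the relators of $G$ in the graph of arcs. I then kill $\pi_1$ equivariantly: any loop $\gamma$ maps under $F_1$ into the simply connected $X$, where its image bounds a disk which, after subdivision and a general-position perturbation, can be taken as a non-degenerate simplicial disk $D_\gamma\to X$. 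I attach a $2$-cell along $\gamma$, extend $F_1$ by $D_\gamma$, and do this over whole $G$-orbits, using only finitely many orbits of generating loops (for the copies of $Y$ and for the relators). The outcome $Z$ is simply connected (cf.\ Lemma \ref{lemma2}), equivariant, non-degenerate over $X$, and still contains $Y$; the action remains free because the added $2$-cells lie along loops in distinct translates.

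For the \textsc{gsc} conclusion in the sense of Definition \ref{gsc}, I would read off a handle decomposition: the vertices collapse onto a \textsc{properly} embedded tree $T$, the essential $1$-cells are the non-tree edges of the $Y$-copies together with the Cayley connectors, and the $2$-cells are the cells of the $Y$-copies together with the $\pi_1$-killing disks. Because every $2$-handle was introduced precisely to cancel the loop created by its $1$-handles, the $1$- and $2$-handles can be matched in cancelling position; ordering the handles by their Cayley-graph level then yields the \emph{easy id + nilpotent} shape of the geometric intersection matrix (1 -- 4).

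The hard part is meeting all three demands simultaneously. Equivariance forbids a $G$-invariant spanning tree of the Cayley graph, so connecting the copies unavoidably creates cycles that must be cancelled by $2$-handles; making these cancellations at once non-degenerate, free, and upper-triangular (so the matrix is genuinely easy id + nilpotent) is exactly the content that the abstract scheme conceals. In particular, producing the non-degenerate filling disks $D_\gamma$ and controlling their mutual intersections is where the real geometric work of \cite{Po_QSF1_Geom-Ded} resides; the statement proved here is the clean homotopy-theoretic shadow of that construction.
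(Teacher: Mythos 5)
Your overall strategy --- manufacture $G$-many copies of the cells of $Y$, let $G$ permute them, then equivariantly fill loops to force simple connectivity, extending the map into $X$ via $\pi_1X=0$ --- is the same forcing scheme as the paper's. But two differences matter. First, the paper never disconnects anything: it keeps the $0$-skeleton equal to $Y^{(0)}$, forcing the free action there through the trivialization $Y^{(0)}=X^{(0)}\times\mathbb Z_+$ given by the hypothesis (acting by $g(x,n)=(gx,n)$; this is what the surjectivity/countable-fibre assumption is for, which your proposal uses only to say $Z$ stays countable), and multiplies only the $1$- and $2$-cells by $G$. Since the translated cells attach to the old vertices, the intermediate complex $Z_0$ is automatically connected, and no Cayley-connector scaffolding is needed. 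Your disconnect-and-reconnect variant is workable, but it is what drags the Cayley-graph bookkeeping into the last step, where the real trouble lies.

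Second, and this is a genuine gap: your \textsc{gsc} verification does not work as stated. You fill loops generating $\pi_1$ (generators of the $Y$-copies plus relator loops) and claim that "ordering by Cayley-graph level" yields the form (1 -- 4). But Definition \ref{gsc} requires a \emph{bijection} between the non-tree $1$-cells and a chosen subset of $2$-cells, each $2$-cell meeting its partner exactly once and meeting only lower-indexed $1$-cells otherwise. Your filling disks are not in bijection with the non-tree edges: with $n$ generators and $m$ relators there are roughly $(n-1)|G|$ non-tree Cayley edges against $m|G|$ relator disks, and the generators of $\pi_1(Y)$ (possibly infinitely many, not "finitely many orbits") need not correspond to the non-tree edges inside the $Y$-copies; moreover a relator disk crosses several connector edges with no distinguished one, so neither the bijection nor the triangularity is available. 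You then defer exactly this difficulty to the "real geometric work" of \cite{Po_QSF1_Geom-Ded}, but in fact the paper closes it \emph{inside} the abstract argument, by a device your proposal misses: the \textsc{gsc} certificate does not have to be equivariant --- only $Z$ and $F_1$ do. Take a maximal tree $T\subset Z_0^{(1)}$ (it need not be $G$-invariant, and in general cannot be), let $e_1,e_2,\dots$ be the non-tree edges and $\lambda_i$ dual loops with $\lambda_i\cdot e_j=\delta_{ij}$ (tree path, $e_i$, tree path back); attach abstract disks along \emph{all} translates $g\lambda_i$, which keeps $Z$ and $F_1$ equivariant, and then certify \textsc{gsc} using only the identity translates $\{D_i^2\}$, whose geometric intersection matrix with $\{e_j\}$ is exactly the identity. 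The remaining $2$-cells (the $gD_i^2$ with $g\neq 1$, and the original ones coming from $Y$) are harmless, since Definition \ref{gsc} explicitly allows the matched $2$-cells to be a proper subset of all $2$-cells.
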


\begin{proof}[Sketch of proof] We choose \textbf{arbitrarily} an indexing by the natural integers $\mathbb Z_+$
of each fiber $F^{-1} (x), \ x\in X^{(0)}$, i.e. we trivialize $Y^{(0)}$ as $Y^{(0)}= X^{(0)} \times \mathbb Z_+$.
 We \textsc{force} then an action $G \times Y^{(0)} \to Y^{(0)}$, by $g(x, z_+) = (gx, z_+)$.

 Next, we construct an intermediary simplicial space $Y \subset Z_0 \subset Z$ with the following spare parts
 $Z_0^{(0)} = Y^{(0)}$, $Z_0^{(\epsilon)} = G \times Y^{(\epsilon)}$, for $\epsilon = 1, 2$.

 We put together these spare parts as follows. Whenever at level $Y$ we have the face-operation
   $ \nu_i ^{(\epsilon)}:Y^{(\epsilon)} \to Y^{(\epsilon -1)}$, then we \textsc{force} the face relations:
    $ \nu_i ^{\epsilon}(g s^{\epsilon} )= g (\nu_i ^{\epsilon} s^{\epsilon})$, for $Z_0$.

    Concretely, this means that, when at level $Y$ we have things like
    $$ \partial s ^1 = s_1^0 - s_2^1, \ \mbox{ or } \partial s^2 = s_0^1 - s_1 ^1 + s_2^1, $$
    then at level $Z_0$ we have
$$\partial gs ^1 = gs_1^0 - gs_2^1, \mbox{ and } \partial gs^2 =g s_0^1 - gs_1 ^1 + gs_2^1,\mbox{for any } g\in G.$$
Moreover, we can define the map $F_0: Z_0 \to Y$ by $F_0(gs^{\epsilon}) = g F_0 (s^{\epsilon})$. This map is certainly equivariant but $Z_0$ is still not \textsc{gsc}.

So here is how we force this too, continuing to stay equivariant (and the concrete geometric realization
of this abstract nonsense story will stay locally-finite too).

Let $T\subset Z_0^{(1)}$ be a maximal tree, which misses, exactly, the edges $e_1, e_2, \ldots $ of $Z_0$. In $Z_0^{(1)}$, we chose then a family of closed loops $\lambda _i$ with geometric intersection matrix $\lambda _i \cdot e_j= \delta _{ij}$.

Let $D^2_i$ be an abstract disk cobounding $\lambda _i$. In $Z_0$ we have loops $g \lambda _i$ for every $i$ and every $g\in G$. With this we construct the space $$Z \equiv Z_0 \underset{\sum g \lambda _i }{\bigcup} g D^2_i.$$

Clearly $Z$ is \textsc{gsc} and, with $\pi_1 X=0$, the equivariant map $F_1: Z \to X$ should now be obvious. \end{proof}

All this was purely abstract stuff, and to prove Theorem \ref{uniform} we need to combine it with the technology of bi-collared handles, making the local-finiteness also achieved. But
there is here a bit more to be done.

If we leave things as they are now, we have nasty Cantorian accumulation transversal to the double line, like in the next section. We avoid it by an appropriate ``decantorianization'', which we will not describe here. And we need to avoid them for the proof that all
groups are \textsc{qsf}.

With this, we have finished with Theorem \ref{uniform} and we can comment now Theorem \ref{uniform-zipping-lenght}. Actually, both Theorem \ref{uniform} and Theorem \ref{uniform-zipping-lenght} are important steps in the proof that all groups are  \textsc{qsf}, see here Part II of this survey.

Let us say that, when Theorem \ref{uniform} has been proved, it leaves we with a \textsc{representation}
with all the features from the Theorem \ref{uniform} in question, namely the following:
$$f: X^3 \to \widetilde {M^3(G)}. \e{(1-29)}$$
There is no reason, of course, that for the double points
$$ (x,y) \in M^2(f) \subset X^3 \times X^3, \e{(1-30)}$$
and for their zipping paths, we should have uniformly bounded zipping length. So here is what we do
about that.

Staying locally-finite and equivariant, we add now bi-collared handles to (1 -- 29), moving to a new
\textsc{representation} which extends (1 -- 29):
$$f_1: X_1^3 \to \widetilde {M^3(G)}, \e{(1-31)}$$
where, by creating additional zipping paths, we \textbf{force} the double points of (1 -- 29),
$M^2(f) \subset M^2(f_1) \subset X_1^3 \times X_1^3$, to have uniformly bounded zipping length.
 The uniform bound is here a simple function of the size of the fundamental domain of the basic
 action $G \times  \widetilde {M^3(G)} \to  \widetilde {M^3(G)}$.

 Next, by a second extension, we force the double points in $M^2(f_1) - M^2(f)$ to have uniformly
 bounded zipping length too. But then, also, more  uncontrolled  double points have been created.

 So, we need a  full  infinite sequence of bigger and bigger \textsc{representations}
 $f_n : X^3_n \to  \widetilde {M^3(G)}, n=1,2,3 \ldots$ coming with the commutative diagram
 $$\begin{diagram}
X^3_n & \rTo^{f_n} &&\widetilde {M^3 (G)} \\
    \dInto & & \ruTo_{f_{n+1}}(3,2) &\\
    X^3_{n+1}& & &\end{diagram}$$
 and such that for any double point $(x,y) \in M^2(f_n) \subset M^2(f_{n+1})$ there is a zipping path
 of uniformly bounded zipping length in $f_{n+1} : X^3_{n+1} \to  \widetilde {M^3(G)}$,
 leaving the rest of the double points $ M^2(f_{n+1}) - M^2(f_{n})$ completely uncontrolled. That uniform bound is,
 of course, $n$-dependant. The big issue is to make this infinite process converge, see \cite{Po_QSF1_Geom-Ded}
 for details.

 And retain also that an \textsc{infinite process} is actually necessary for achieving the desired
  uniformly bounded zipping length, staying locally-finite and equivariant, at the same time.
  (Of course some work is necessary for achieving all these things, simultaneously, see \cite{Po_QSF1_Geom-Ded}).

 \section{Appendix: a chaotic 2-dimensional REPRESENTATION of the Whitehead manifold}

 The present section is not concerned with groups nor with group actions, but it should show how wild the accumulation pattern for the double points set of a \textsc{representation} can be.
 And, in real life, i.e. in geometric group theory,  phenomena  like those exhibited now, will be
 avoided, like hell, via the so-called ``decantorianization''. [An idea of the process in question is suggested by Figure 2.7.1 in \cite{PT4-AMS}, a figure which is directly related to the material that we will present now].

 \begin{thm}[V. Po\'enaru - C. Tanasi, \cite{PT-Wh}]\label{repr-Wh^3}
 There exists a locally-finite, 2-dimensional \textsc{representation} for the
  Whitehead manifold $Wh^3$, call it
  $$ f: X^2 \to Wh^3, \e{(1-32)}$$
  with the following features:
  \begin{itemize}
  \item The $X^2$ is gotten from a point by infinitely many dilatations. (We say it is ``collapsible'',
  a property stronger than \textsc{gsc}. So $X^2$ is certainly \textsc{gsc} too).

  \item We have $M_3(f) = \emptyset$, and each connected component of the $M_2(f)$ is a finite tree, based at a mortal singularity.

  \item There are contact lines $\Lambda \subset X^2$ which are \textbf{tight transversals} (a
  term to be soon explained), to $M_2(f) \subset X^2$, such that $\Lambda \cap M_2(f)$ accumulates
  on a Cantor set.
  \end{itemize}
 \end{thm}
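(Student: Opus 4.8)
The plan is to build the \textsc{representation} explicitly from the standard nested-tori picture $Wh^3 = \bigcup_{j=1}^{\infty} T_j$, in which each $T_j$ sits in the interior of $T_{j+1}$ as a Whitehead pattern, and then to read off the double-point structure from the self-similarity of that pattern. The starting observation is the one already exploited in the discussion around Figure \ref{fig:2}: since the Whitehead link has linking number zero, a meridian-type curve of $T_j$ is null-homologous in $T_{j+1}$ and therefore bounds there a singular (immersed) disk whose only self-intersection is a single clasp, i.e. a double arc. First I would take, at every level $j$, such a clasped disk $D_j$ together with its clasp arc, and assemble the source $X^2$ as the telescope obtained by gluing each $D_j$ to $D_{j+1}$ along the boundary curve that the clasp of $D_j$ leaves uncapped; the map $f$ is the tautological one sending each $D_j$ into $T_{j+1}\subset Wh^3$. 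Collapsibility is then built in by construction: each stage is obtained from the previous one by expansions (the inverse of elementary collapses), so $X^2$ is gotten from a point by infinitely many dilatations, which yields the first bullet and in particular \textsc{gsc}.

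Next I would verify the three defining properties of a \textsc{representation} for this $f$. Non-degeneracy is automatic, since each $D_j$ is mapped by an immersion away from its clasp. Essential surjectivity holds because $\overline{f(X^2)}$ is a spine of $Wh^3$, so that $Wh^3$ is recovered from it by attaching cells of dimensions $2$ and $3$. The condition $\Psi(f)=\Phi(f)$ is the heart of this step: I would exhibit, for every double point $(x,y)\in M^2(f)$, an explicit zipping path that runs along the clasp back to the mortal singularity sitting at its tip, in the sense of Figure \ref{fig:6}; since every double point of a single clasp is joined to that tip, the smallest $f$-admissible relation killing the singularities must be all of $\Phi(f)$. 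The same local picture shows that, after a generic perturbation, no three sheets meet, so $M_3(f)=\emptyset$, and that each connected component of $M_2(f)$ is exactly one clasp, hence a finite tree based at the mortal singularity at its tip. This gives the second bullet.

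The third bullet is where the genuine content lies, and I would extract it from the self-similarity of the Whitehead pattern. The key point is that the embedding $T_j\hookrightarrow T_{j+1}$ forces a capping disk one level deeper to meet the clasp region of level $j$ \emph{twice}, so that each clasp spawns two descendants at the next generation. Iterating, the clasps are naturally indexed by finite words in a two-letter alphabet, and their nesting realizes the full binary tree. A tight transversal $\Lambda$ is then an arc chosen to cross, coherently, one clasp in each generation; tracking the successive widths of the clasps it meets produces a descending sequence of nested intervals indexed by infinite binary sequences, whose intersection is a Cantor set $C\subset\Lambda$ on which $\Lambda\cap M_2(f)$ accumulates. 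To pin down the quadratic-Julia-set description advertised in the historical discussion of the natural \textsc{representations} of $Wh^3$, I would set up the one-dimensional return dynamics of the clasp-widths along $\Lambda$ and show it is topologically conjugate to a real quadratic map $z\mapsto z^2+c$ in the regime where the Julia set is a Cantor set carrying the full two-shift; the conjugacy then carries the symbolic coding of $C$ onto that of the Julia set.

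The step I expect to be the main obstacle is making the \textbf{local finiteness} of $X^2$ coexist with this Cantorian accumulation. Naively telescoping the clasped disks produces a complex that is not locally finite, exactly the failure noted around Figure \ref{fig:2}; the remedy is to spread the successive generations of clasps out along the telescope so that every compact piece of $X^2$ contains only finitely many disks, while still permitting the transversal $\Lambda$ to meet infinitely many clasps whose crossing points limit onto $C$. Arranging this spreading so that the self-similar widths are preserved, so that $\Lambda$ remains a genuine tight transversal, and so that the dynamical conjugacy above survives is the delicate quantitative part of the argument. It is, moreover, precisely the phenomenon that the \emph{decantorianization} of the later group-theoretic chapters is designed to suppress.
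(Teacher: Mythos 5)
Your overall strategy coincides with the paper's: both build the representation as a telescope of clasped disks read off from the nested-tori picture $Wh^3=\bigcup_j T_j$, with mortal singularities at the clasp tips, consecutive levels glued along the identification $C_n\equiv\partial D_{n+1}$, and a doubling cascade of double points along a tight transversal that produces the Cantor set and the quadratic--Julia dynamics. But two of your steps are genuinely wrong as written. First, collapsibility is \emph{not} ``built in by construction'': attaching $D_{j+1}$ to the telescope along its \emph{entire} boundary circle is not an elementary expansion (a dilatation glues a cell along a proper, collapsible part of its boundary, not along the whole sphere), so the telescope is a priori only contractible. The paper's own proof concedes exactly this point: it constructs a variant $f_1\colon K^2\to Wh^3$ with $K^2$ contractible but explicitly \emph{not} collapsible, and passing to the collapsible $X^2$ of the theorem requires an additional operation of unglueing double lines $f_1(M^2(f_1))$ (changing Figure 2.6 of \cite{PT-Wh} into Figure 2.8 there); your argument never performs this step. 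Second, you cannot obtain $M_3(f)=\emptyset$ ``after a generic perturbation'': for maps of $2$-complexes into $3$-manifolds, triple points are generic and stable, so perturbation creates isolated triple points rather than removing them. The absence of triple points must be engineered by the specific nested positioning of the disks, as in the paper's construction.

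A subtler point: your diagnosis of the local-finiteness obstacle, and your proposed remedy of ``spreading the generations along the telescope,'' miss the actual mechanism. In the telescope with $C_n\equiv\partial D_{n+1}$ each disk already meets only its two neighbours, so the cell structure is locally finite as it stands; no spreading is needed. The real issue is how much of each double line gets identified in the source: the paper self-glues $D_n$ only along the \emph{shorter} arc $[a'_n,a''_n]$ (whose endpoints become the mortal singularities), leaving the remainder of the would-be identification as honest double points of the map $f_1$. This partial-gluing device is precisely what allows $M_2(f)$ to be non-closed, with Cantorian accumulation taking place \emph{inside} single $2$-cells, while the complex itself stays locally finite; it is also what makes each component of $M_2(f)$ a finite tree based at a mortal singularity. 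Without it, your second and third bullets cannot simultaneously be made to hold, and the interplay between local finiteness and the accumulation pattern --- which you correctly flag as the delicate point --- remains unresolved in your proposal.
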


 \begin{figure}

\centering
\includegraphics[scale=0.9]{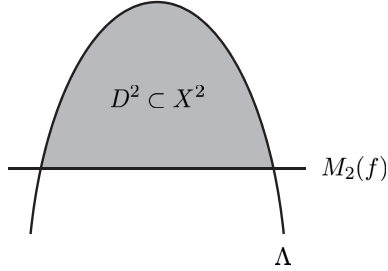}

\caption{Non-tight transversal.} \label{fig:7}

\end{figure}

 We shall sketch the proof of this theorem below, but the full proof and explanations concerning
 its connections to chaos theory are presented in \cite{PT-Wh}. Here are, to begin with, some comments:

 \begin{rem}\
 \begin{itemize}
 \item The fact that the transversal $\Lambda$ is tight means that Figure \ref{fig:7} is forbidden for it.
 \item  The accumulation pattern of $(\Lambda \cap M^2(f))$ is governed by a dynamical pattern which is the same as  that of a Julia set for a real quadratic polynomial map $f_{\lambda}$ with $\lambda$ outside of the Mandelbrot
     set (the set of those $\lambda$'s for which $\lambda \in \mbox{ Julia }(f_{\lambda})$), i.e. letting things become complex. And, as it is well-known, for  $\lambda$ outside of the Mandelbrot
     set, Julia$(f_{\lambda})=$ Cantor set.

     We will sketch below this dynamical pattern, without going into its connection with Julia and
     Mandelbrot, i.e. its connection with chaos. We will just record here that, years ago, it was John Hamal Hubbard who opened the eyes to the
     second author (V.P.) on the connection between his work on \textsc{representations} and the
     Julia sets.

     Finally, we believe that there is here a big connection between wild low-dimensional topology
     and chaos theory, a connection undoubtedly deserving to be explored.

     \item In a certain sense, the \textsc{representation} (1 -- 32) above is the simplest
     2-dimensional \textsc{representation} of $Wh^3$. Of course, the Whitehead manifold is very far from
      the highly symmetrical objects $\widetilde {M^3(G)}$'s with which the present survey paper deals,
      and, the kind of chaotic behaviour which (1 -- 32) has, is carefully avoided for the
      \textsc{representations} of  $\widetilde {M^3(G)}$ (see here Theorem 5 in Part II).
 \end{itemize}
 \end{rem}

 We go now to our Theorem \ref{repr-Wh^3} above. For pedagogical-expository reasons, we will not exhibit here the collapsible (1 -- 32), but a variant of it
 $$ f_1: K^2 \to Wh^3, \e{(1-33)}$$
 with a $K^2$ which is not collapsible, but only contractible. All the other features of (1 -- 32) will
 be there, nevertheless. It is actually not hard, afterwards, to go from (1 -- 33) to the collapsible
  (1 -- 32), by some  unglueing  of double lines $f_1(M^2(f_1))$; basically, one has to change
   the Figure 2.6 in \cite{PT-Wh} to Figure 2.8 in the same paper.

   In order to explain the (1 -- 33), we start with a modification of the construction from Figure \ref{fig:2} and,
   for convenience, we will change the notations $D_1, C_1, b,c$ from Figure \ref{fig:2} into
   $$ D_n, C_n, b_n, c_n . \e{(1-34)}$$
   Here comes now a more serious change. At the level of our Figure \ref{fig:2}, the disk $D_1$ was glued to
   itself along $[b, c]$. Now, instead of letting $D_n$ be glued to itself along $[b_n, c_n]$,
   we only let it stay glued along the shorter $[a'_n, a''_n]$ from Figure 2.5 in \cite{PT-Wh}. With
   this, the fat points $a'_n, a''_n$ become mortal (or undrawable) singularities and the dotted lines in Figure 2.5
   of \cite{PT-Wh} are in $M_2(f_1)$. Also, at the level of $D_n$, which from now on is conceived as being self-glued along $[a'_n, a''_n]$, the line $C_n$ is a simple closed loop. With these things our
   $K^2$ is the infinite union of the disks $D_n$, namely $K^2= D_1\cup D_2 \cup D_3 \cup \ldots$
   where the disks $D_i$ are like in Figure 2.4 of \cite{PT-Wh}.

   And, with this, comes now the basic identification, or gluing process of $D_{n+1}$ to $D_n$:
   $$ C_n \equiv \partial D_{n+1}, \forall n . \e{(1-35)}$$
   Figure 2.9 in \cite{PT-Wh} should suggest how the double point set $M_2(f_1)$  starts
    occurring, when $D_1$ and $D_2$ interact with $D_0$ and then $D_2$ with $D_1$. Of course, this is only the beginning of an infinite cascade where,
    $$ \mbox { for each } n \mbox { the } D_n \mbox { interacts with } D_{n+1}, D_{n+2}, \ldots . \e{(1-36)}$$
    When one looks at the infinite cascade unleashed in Figure 2.9 from \cite{PT-Wh}, we see double points of $f_1$ occurring on each $C_n$ and each $\partial D_n$, typically represented
    as a letter, $p$ or $q$, with upper and lower indices. Keep in mind that, because of (1 -- 35), the same double point occurs both  on  $C_n$ and on $\partial D_{n+1}$.

    For a given double point, we will use the definitions
    $$ \{ \mbox{flavour of the double point}\} \equiv \{\mbox{the number of lower indices} \} + 1 , \mbox { and } \e{(1-37)}$$
    $$\{ \mbox{color of the double point}\} \equiv \{\mbox{the letter }  (p \mbox{ or } q ), \mbox { and the unique upper index}\}. $$

    The canonical isomorphism $\partial D_{n+1} = C_n$ is both color and flavour preserving.

    The schematical Figure \ref{fig:8} can help in visualizing what is going on. In this figure,
    $s_n$ stands for the singularities $a_n'$ or $a_n''$. Also, the numbers which occur
    on the lower line are connected with the index of the singularity $s_n$ which generates
    them, and have nothing to do with color or flavour.

    \begin{figure}

\centering

\includegraphics{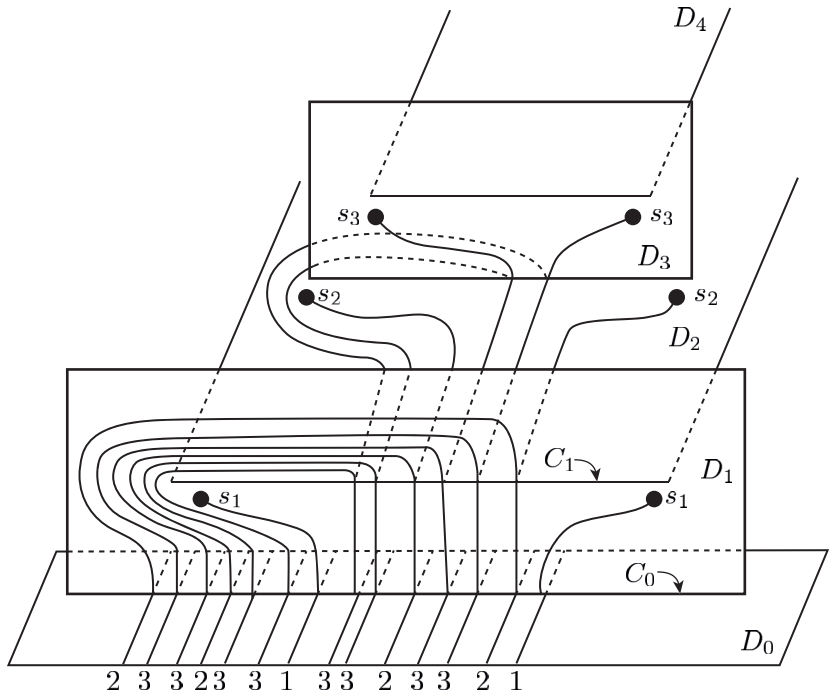}

\caption{The dynamical algorithm.} \label{fig:8}

\end{figure}

    We will end this discussion with the explicit description of the dynamical algorithm
    which generates the set $M_2(f_1) \cap \Lambda$, for a typical tight transversal
    $\Lambda = C_n$.

    We start with Figure 2.10 in \cite{PT-Wh} where, to begin with, we
    have just the generic fat lines $C, \partial D$. We will explain how the rest of the
    figure is generated by the Julia dynamical algorithm for generating $M_2(f_1) \cap \Lambda$,
     and, eventually, the numbers which will start popping up both on $C$ and on $\partial D$,
     will correspond to the flavours in (1 -- 37).

    We have the initial points $a_{up}$ and $a_{down} \in C$. Then, starting from them,
     we draw horizontal dotted lines to $\partial D$, generating the two points $1 \in
     \partial D$. By dynamical self-consistency, we also draw the two $1\in C$, in agreement
     to the isomorphism $\partial D = C$, which becomes now flavour preserving.
     From the two points $1 \in C$ we generate next four points $2 \in \partial D$ and
     then this process continues indefinitely.

     Much more details concerning this story are provided in \cite{PT-Wh}.

\end{document}